\theoremstyle{definition}
\newtheorem{defin}{Definition}
\newtheorem*{remark}{Remark}
\theoremstyle{plain}
\newtheorem*{theo*}{Theorem}
\newtheorem{prop}{Proposition}
\newtheorem{cor}[prop]{Corollary}
\newtheorem*{prop*}{Proposition}
\DeclareMathOperator{\link}{link} \DeclareMathOperator{\rk}{rk}
\DeclareMathOperator{\relint}{relint}
\DeclareMathOperator{\conv}{conv} \DeclareMathOperator{\aff}{aff}
\DeclareMathOperator{\Tor}{Tor} \DeclareMathOperator{\bideg}{bideg}
\DeclareMathOperator{\C}{H} \DeclareMathOperator{\sr}{s_{\mathbb R}}
\newcommand{\sh}{\mathop{\rm \#}\limits}
\newcommand {\ib}[1]{\textit{\textbf{#1}}}
\begin{document}
\title{\bf\large{Buchstaber Invariant of Simple Polytopes.}}
\author{Nickolai Erokhovets.}
\date{}
\maketitle
\begin{abstract} In this paper we study a new combinatorial
invariant of simple polytopes, which comes from toric topology. With
each simple $n$-polytope $P$ with $m$ facets we can associate a
moment-angle complex $\mathcal Z_P$ with a canonical action of the
torus $T^m$. Then $s(P)$ is the maximal dimension of a toric
subgroup that acts freely on $\mathcal Z_P$. The problem stated by
Victor\,M. Buchstaber is to find a simple combinatorial description
of an $s$-number. We describe the main properties of $s(P)$ and
study the properties of simple $n$-polytopes with $n+3$ facets. In
particular, we find the value of an $s$-number for such polytopes, a
simple formula for their $h$-polynomials and the bigraded cohomology
rings of the corresponding moment-angle complexes.
\end{abstract}

\section{Introduction.}
Let $P^n=\{\ib{x}\in\mathbb R^n\colon A_p\ib{x}+\ib{b}_p\geqslant
0\}$ be a simple polytope and $\mathfrak{F}=\{F_1,\dots,F_m\}$ the
set of facets of $P^n$. For each facet $F_i\in\mathfrak F$ denote by
$T^{F_i}$ the one-dimensional coordinate subgroup of $T^{\mathfrak
F}=T^m$. Then assign to every face $G$ the coordinate subtorus
$T^G=\prod_{F_i\supset G}T^{F_i}\subset T^{\mathfrak F}$. For each
point $\ib{q}\in P$ we denote by $G(\ib{q})$ a unique face
containing $\ib{q}$ in its relative interior.

For any combinatorial simple polytope $P^n$ we introduce the identification
space: $\mathcal Z_P=(T^\mathfrak F\times\nobreak P^n)/{\sim}$\,, where
$(t_1,\ib{p})\sim(t_2,\ib{q})$ if and only if $\ib{p}=\ib{q}$ and
$t_1t_2^{-1}\in T^{G(\ib{p})}$.

It turns out that $\mathcal Z_P$ is a smooth manifold of dimension
$m+n$ with a smooth action of $T^m$ induced by the standard action
of the torus on the first factor. Then $\mathcal Z_P/T^m=P$, and the
stabilizer of a point $[(t,\ib{q})]$ is $T^{G(\ib{q})}$.

\begin{defin}[See \cite{BP}]
\hskip-1mm The {\itshape Buchstaber number} $s(P)$ is the maximal
dimension of a torus subgroup $H\cong T^s$ that acts freely.
\end{defin}

The problem stated by Victor\,M. Buchstaber in 2002 is to find a
simple combinatorial description of $s(P)$.

It is possible to define $s(K)$ for any simplicial complex in such a
way that $s(P)=s(\partial P^*)$, where $\partial P^*$ is a boundary
complex of a dual polytope.

In this article we establish some main properties of the $s$-number,
which we summarize in the following theorem (see \cite{E}). Some
definitions are given below.
\begin{theo*}
The $s$-number satisfies the following properties:

\begin{enumerate}

\item $s(P)\geqslant s(Q)$ if $Q$ is obtained from $P$ by forgetting
one of the inequalities $\ib{a}_i\ib{x}+b_i\geqslant 0$.

\item $s(P)=1$ if and only if $P=\Delta^n$.

\item If $n+2\leqslant m\leqslant\frac{49}{48}n+\frac{83}{48}$,
then\quad $s(C^n(m)^*)=2$. In particular, for each $k\geqslant 2$
there exists a polytope with $m-n=k$ and $s(P)=2$.

\item $s(P)+s(Q)\leqslant s(P\times Q)\leqslant
s(P)+s(Q)+\min\{m_1-n_1-s(P),m_2-n_2-s(Q)\}$.

\item $s(P)+s(Q)\leqslant s(P\sh Q)$, where $P\sh Q$ is a connected sum
    of polytopes along  vertices.

\item If $f:K_1^{n-1}\to K_2^{n-1}$ is a non-degenerate map of simplicial complexes,
then
$$
m_1-s(K_1)\leqslant m_2-s(K_2),
$$
where $m_1$ and $m_2$ are
the numbers of vertices of $K_1$ and $K_2$.

\item $m_F-s(F)\leqslant m-s(P)$, where $F$ is a facet of $P$.

\item $s(P)\geqslant m-\gamma(P)+s(\Delta^{\gamma-1}_{n-1})$, where
$\gamma(P)$ is a chromatic number of $P$ and
$\Delta^{\gamma-1}_{n-1}$ is a $(n-1)$-dimensional skeleton of the
simplex $\Delta^{\gamma-1}$. In particular,
$$
s(P)\geqslant
m-\gamma+\left[\frac{\gamma}{n+1}\right]\geqslant\left[\frac{m}{n+1}\right].
$$

\item \begin{itemize}
\item[a)] $s(\Delta^{m-1}_{n-1})\geqslant 2$ if and only if
$\frac{m}{n+1}\geqslant \frac{3}{2}$;
\item[b)] $s(\Delta^{m-1}_{n-1})\geqslant 3$ if and only if
$$
4m\geqslant\begin{cases}
7(n+1),&m=0\mod 7;\\
7(n+1)+4,&m=1\mod 7;\\
7(n+1)+8,&m=2\mod 7;\\
7(n+1)+5,&m=3\mod 7;\\
7(n+1)+2,&m=4\mod 7;\\
7(n+1)+6,&m=5\mod 7;\\
7(n+1)+3,&m=6\mod 7;\\
\end{cases}
$$
\end{itemize}

\item For a simplicial complex $K$ on the set of vertices
$[m]=\{1,\dots,m\}$ let $\omega$ be a minimal non-simplex, if
$\omega\notin K$, but any proper face $\sigma\subset\omega$ belongs
to $K$. Then if $\omega_1,\dots,\omega_l$ is a collection of minimal
non-simplices such that $\omega_1\cup\dots\cup\omega_l=[m]$, then
$$
m-s(K)\leqslant
\dim\omega_1+\dots+\dim\omega_l=(|\omega_1|-1)+\dots+(|\omega_l|-1).
$$

\item A simple polytope is called flag if any collection of facets
$F_{i_1},\dots,F_{i_l}$ such that any two $F_{i_s}, F_{i_t}$ of them
intersect: $F_{i_s}\cap F_{i_t}\ne \varnothing$ has nonempty
intersection $F_{i_1}\cap\dots\cap F_{i_l}\ne\varnothing$. Then for
a flag polytope $P^n$ with a chromatic number $\gamma$ we have:
$\gamma\leqslant[\frac{m-n}{2}]+n$, thus
$$
s(P)\geqslant
\left\lceil\frac{m-n}{2}\right\rceil+s(\Delta^{\gamma-1}_{n-1})
$$

\item A simple polytope is called $k$-flag, if any collection of facets
$F_{i_1},\dots,F_{i_l}$ such that any $k$ of them have nonempty
intersection has nonempty intersection: $F_{i_1}\cap\dots\cap
F_{i_l}\ne\varnothing$. Then for a $k$-flag polytope $P^n$:
$$
s(P)\geqslant \left\lceil \frac{m-n}{k}\right\rceil-(k-2)n.
$$

\item Let $\mathbb Z[P]=\mathbb Z[v_1,\dots,v_m]/I$,
$I=(v_{j_1}v_{j_2}\dots v_{j_l}:F_{j_1}\cap\dots\cap
F_{j_l}=\varnothing)$ be a face ring of a polytope $P$.
$P_{k_1,\dots,\,k_m}$ is defined as a (combinatorial) simple
polytope with the face ring
\begin{multline*}
\mathbb Z[v^1_1,v^2_1,\dots,v^{k_1}_1,v^1_2,\dots,
v^{k_1}_2,\dots,v^1_m,\dots, v^{k_m}_m]/J, \\
J=(v^1_{j_1}v^2_{j_1}\dots v^{k_{j_1}}_{j_1}v^1_{j_2}v^2_{j_2}\dots
v^{k_{j_2}}_{j_2}\dots v^1_{j_l}v^2_{j_l}\dots v^{k_{j_l}}_{j_l}:
v_{j_1}\dots v_{j_l}\in I).
\end{multline*}
Then $s(P_{k_1,\dots,\,k_m})=s(P)$.

\item It is known (see {\upshape \cite{GB}}) that each simple polytope
    $P^n$ with $m=n+3$ facets can be represented in terms of a regular
    $(2k-1)$-gon $M_{2k-1}$ and a surjective map from $\mathfrak
    {F}=\{F_1,\dots, F_{n+3}\}$ to the set of vertices of $M_{2k-1}$. The
    facets $F_{i_1},\dots,F_{i_n}$ intersect in a vertex if and only if
    the triangle formed by the vertices corresponding to the rest three
    facets contains the center of $M_{2k-1}$.

Then for such a polytope $P_{a_1,\dots,\,a_{2k-1}}$ we have: $s(P)=3$ if
and only if $k\leqslant 4$.

Here $k$ can be expressed in terms of bigraded Betti numbers
$$
2k-1=\sum\limits_{j}\beta^{-1,\,2j}(\mathcal
Z_{P_{a_1,\dots,\,a_{2k-1}}})=\sum\limits_{j}\beta^{-2,\,2j}(\mathcal
Z_{P_{a_1,\dots,\,a_{2k-1}}}).
$$
\item There are two polytopes $P$ and $Q$ with the equal $f$-vectors and
chromatic numbers, but the different $s$-numbers. Nevertheless our
$P$ and $Q$ have different bigraded Betti numbers.

\item If $P$ is obtained from $Q$ by one $i$-flip, $2\leqslant i\leqslant n-1$, then
$|s(P)-s(Q)|\leqslant 1$.
$$
s(P)+1\leqslant s(P\sh \Delta^n)\leqslant s(P)+2.
$$
\end{enumerate}
\end{theo*}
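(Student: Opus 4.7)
The plan is to use the characteristic-function description of the Buchstaber invariant: for a simplicial complex $K$ on $m$ vertices, $s(K) = m - r(K)$, where $r(K)$ is the least $r$ for which there exists $\lambda\colon V(K)\to\mathbb{Z}^r$ such that for every maximal simplex $I$ of $K$ the vectors $\{\lambda(v): v\in I\}$ span a rank-$n$ direct summand of $\mathbb{Z}^r$. Each inequality in property 16 will be reduced to upgrading an optimal characteristic function on one side of the combinatorial operation to a valid characteristic function on the other side, at the cost of at most one extra coordinate.

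For the bistellar $i$-flip with $2\leqslant i\leqslant n-1$, the complexes $\partial P^*$ and $\partial Q^*$ share a vertex set and differ only in the subcomplex on $\tau\cup\sigma$ (with $|\tau|=i+1$, $|\sigma|=n-i$): the facets $\tau\cup(\sigma\setminus v)$, $v\in\sigma$, on the $P$-side are replaced by $(\tau\setminus v)\cup\sigma$, $v\in\tau$, on the $Q$-side. Starting from an optimal $\lambda'$ for $\partial Q^*$, the $\partial Q^*$-conditions force $\lambda'(\tau\cup\sigma)$ to have rank exactly $n$ with a single (up to scalar) dependency $\sum_{u\in\tau\cup\sigma} a_u\lambda'(u)=0$ in which $a_u\neq 0$ for every $u\in\tau$. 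The $\partial P^*$-conditions fail precisely at those $v_0\in\sigma$ where $a_{v_0}=0$. To repair this I would augment $\lambda'$ by a single coordinate $e_u$ supported on $\tau\cup\sigma$ with $\sum_u a_u e_u\neq 0$ (for instance $e_{u_0}=1$ for a fixed $u_0\in\tau$, zero elsewhere). An identical calculation shows the augmented $\tilde\lambda$ has rank $n$ on every $\partial P^*$-facet, giving $r(P)\leqslant r(Q)+1$ and hence $s(P)\geqslant s(Q)-1$; swapping $P$ and $Q$ yields the reverse inequality.

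For the connected-sum statement, the lower bound $s(P)+1\leqslant s(P\sh\Delta^n)$ is immediate from property 5 combined with $s(\Delta^n)=1$ (property 2). For the upper bound, I would use that $\partial(P\sh\Delta^n)^*$ is obtained from $\partial P^*$ by an operation of stellar type: one new vertex $v_*$ is introduced, and one facet $\sigma$ of $\partial P^*$ (corresponding to the chosen vertex of $P$) is replaced by the $n$ new facets $\{v_*\}\cup(\sigma\setminus v)$, $v\in\sigma$. Given any optimal $\lambda'$ for $\partial(P\sh\Delta^n)^*$ of rank $r'$, its restriction to $V(\partial P^*)$ automatically satisfies every $\partial P^*$-condition outside $\sigma$. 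The $\partial(P\sh\Delta^n)^*$-conditions at the $n$ new facets force each $(n-1)$-subset of $\lambda'(\sigma)$ to be linearly independent, so $\lambda'(\sigma)$ has rank at least $n-1$, and the same one-extra-coordinate trick produces a valid characteristic function for $\partial P^*$ of rank at most $r'+1$. Hence $s(P)\geqslant s(P\sh\Delta^n)-2$.

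The main obstacle in both parts is not the numerical rank but the \emph{direct-summand} property: after the single-coordinate augmentation, the $n$ columns of the modified matrix must have their $n$-minors generate $\mathbb{Z}$, not merely a non-zero ideal. I would handle this by selecting the extra coordinate so that one entry equals $1$ at a well-chosen vertex; expanding an $n$-minor along the new column then leaves a maximal minor of $\lambda'$ that already generates $\mathbb{Z}$, guaranteed on each unaffected facet by the direct-summand property of $\lambda'$, and a short Smith-normal-form argument transfers this to the modified facet.
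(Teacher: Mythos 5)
Your proposal covers only item 16 of the sixteen-part theorem; items 1--15 are not addressed at all (items 2 and 5 are even invoked as inputs for the bound $s(P)+1\leqslant s(P\sh\Delta^n)$), so as a proof of the stated theorem it is incomplete. For item 16 itself your route is essentially the paper's, transplanted to the dual simplicial complexes: the paper takes a non-degenerate map $\mathfrak F(P)\to\mathbb Z^{m-s(P)}$, appends one coordinate equal to $1$ on the moving facet $F_{j_1}$ and $0$ elsewhere, and checks each new vertex of $Q$ by observing that its remaining $n-1$ facets lie in the old vertex obtained by omitting $F_{j_1}$, hence already form part of a basis; the inequality $s(P\sh\Delta^n)\leqslant s(P)+2$ is obtained by running the same one-extra-coordinate construction for the inverse $n$-flip, and the lower bound comes from items 2 and 5 --- exactly the shape of your sketch.

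Within item 16 there is one incorrect step. It is not true that the $\partial Q^*$-conditions force $\lambda'(\tau\cup\sigma)$ to have rank exactly $n$: the set $\tau\cup\sigma$ is a non-simplex, and non-simplices impose no conditions, so these $n+1$ vectors may perfectly well be linearly independent (for instance part of a basis of $\mathbb Z^{r}$ when $r\geqslant n+1$); likewise ``the $\partial P^*$-conditions fail precisely at those $v_0$ with $a_{v_0}=0$'' conflates rational independence with the required direct-summand property. Fortunately none of this is needed: your concrete choice (new coordinate equal to $1$ at a fixed $u_0\in\tau$, $0$ elsewhere) works unconditionally, because for every new facet $\tau\cup(\sigma\setminus v)$ the other $n-1$ vertices form the set $(\tau\setminus u_0)\cup(\sigma\setminus v)$, which is contained in the $\partial Q^*$-facet $(\tau\setminus u_0)\cup\sigma$; hence their $(n-1)\times(n-1)$ minors already generate $\mathbb Z$, and expanding the $n\times n$ minors of the augmented matrix along the new column gives the direct-summand condition (the same computation also disposes of the unaffected facets that happen to contain $u_0$). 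This containment in an old facet is the crux, and it is what your last paragraph should say: appealing to ``the direct-summand property on each unaffected facet'' is not quite what is used, since the relevant $(n-1)$-set is a proper subset of a $\partial Q^*$-facet rather than a facet of $\partial P^*$. With the false rank claim deleted and the containment stated explicitly, your argument for item 16, including the stellar-subdivision treatment of $P\sh\Delta^n$, coincides with the paper's; the remaining fifteen items still need proofs.
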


We also investigate the properties of simple polytopes with $n+3$
facets. In particular, we will prove the following fact:
\begin{theo*}
For the polytope $P=P_{a_1,\dots,\,a_{2k-1}}$ the bigraded cohomology ring
$\C^{*,\,*}(\mathcal Z_P)$ is isomorphic to the free abelian group $\mathbb
Z\oplus \mathbb Z^{2k-1}\oplus\mathbb Z^{2k-1}\oplus \mathbb Z$ with the
generators
\begin{center}
$1$, $\bideg 1=(0,0)$;\\
$X_i$, $\bideg X_i=(-1,2(a_i+\dots+a_{i+k-2})), i=1,\dots, 2k-1$;\\
$Y_j$, $\bideg Y_j=(-2,2(a_j+\dots+a_{j+k-1})), j=1,\dots,2k-1$;\\
$Z$, $\bideg Z=(-3,2(n+3))$.
\end{center}
For $k\geqslant 3$
$$
X_i\cdot X_j=0\qquad X_i\cdot Y_j=\delta_{i+k-1,\,j}Z\qquad Y_i\cdot
Y_j=0,
$$
and for $k=2$
$$
X_i^2=0,\qquad X_iX_{i+1}=-X_{i+1}X_i=Y_i,\qquad X_1X_2X_3=Z.
$$
\end{theo*}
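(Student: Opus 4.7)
\emph{Proof plan.} We compute $\C^{*,*}(\mathcal Z_P)$ by the Hochster--Baskakov formula
$$\C^{-i,\,2j}(\mathcal Z_P) \cong \bigoplus_{J\subseteq[m],\,|J|=j} \tilde H^{\,j-i-1}(K_J;\mathbb Z),$$
where $K = \partial P^*$ is the dual simplicial sphere and $K_J$ denotes the full subcomplex of $K$ on the vertex set $J$. Products are given by Baskakov's rule: the cup product of classes in $\tilde H^*(K_{J_1})$ and $\tilde H^*(K_{J_2})$ vanishes when $J_1\cap J_2\neq\varnothing$ and is otherwise induced by the natural inclusion $K_{J_1}*K_{J_2}\hookrightarrow K_{J_1\cup J_2}$. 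The whole theorem thus reduces to classifying which $K_J$ carry non-trivial reduced cohomology and computing a handful of product maps between them.

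\emph{Identifying the generators.} Group the $m$ vertices of $K$ into colour classes $C_1,\ldots,C_{2k-1}$ via the polygon projection $\pi$. The Gale-diagram description translates into: $\sigma\subseteq[m]$ is a simplex of $K$ iff $\pi([m]\setminus\sigma)$ is not contained in any arc of $k$ consecutive polygon vertices. For $J\subseteq[m]$, set $A=\{i:C_i\subseteq J\}$; since whether $\sigma\subseteq J$ is a simplex depends only on $\sigma\cap\bigcup_{i\in A}C_i$, the complex $K_J$ factors as a join with the simplex on the vertices of $J$ lying in partially saturated blocks, whence $K_J$ is contractible unless $J=\bigcup_{i\in A}C_i$. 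Assume now $J=\bigcup_{i\in A}C_i$ and split on $A$: (i) if $A$ is an arc of $k-1$ consecutive blocks, then only $\sigma=J$ fails to be a simplex in $K_J$, giving $K_J=\partial\Delta^J\simeq S^{|J|-2}$ and the generator $X_i$ of bidegree $(-1,2|J|)$; (ii) if $A$ is an arc of $k$ consecutive blocks, a Mayer--Vietoris decomposition $K_J=X\cup Y$ -- with $X$ allowing some interior block of $A$ to be unsaturated and $Y$ forcing both boundary blocks of $A$ to be unsaturated -- produces two contractible pieces meeting in a complex of the homotopy type of $S^{|J|-4}$, whence $K_J\simeq S^{|J|-3}$ and we obtain $Y_j$ of bidegree $(-2,2|J|)$; (iii) if $A=\{1,\ldots,2k-1\}$, then $K_J=K=\partial P^*$ is an $(n-1)$-sphere, giving $Z$ of bidegree $(-3,2(n+3))$. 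In every remaining case $K_J$ is contractible: if $A$ contains no $(k-1)$-arc then $\pi([m]\setminus J)$ already hosts a Gale triangle and $K_J=\Delta^J$, while if $A$ is an arc of size between $k+1$ and $2k-2$ or a non-arc containing some $(k-1)$-arc, an iterated Mayer--Vietoris over the poset of valid saturated-block patterns collapses $K_J$ to a contractible union.

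\emph{Products and main obstacle.} For $k\geq 3$: the union of two $(k-1)$-arcs of the $(2k-1)$-gon never falls into configurations (i)--(iii), so $X_iX_j=0$; two $k$-arcs in a $(2k-1)$-gon cannot be disjoint (they would need $2k$ blocks), so $Y_iY_j=0$; a $(k-1)$-arc at $i$ and a $k$-arc at $j$ are disjoint iff $j\equiv i+k-1\pmod{2k-1}$, in which case $J_{X_i}\cup J_{Y_j}=[m]$ and the inclusion $K_{J_{X_i}}*K_{J_{Y_j}}\hookrightarrow K$ is a homotopy equivalence $S^{m-4}\xrightarrow{\sim} S^{m-4}$, identifying the join product class with $\pm Z$; a uniform orientation convention yields $X_iY_j=\delta_{i+k-1,j}\,Z$. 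For $k=2$ the polytope is $\Delta^{a_1-1}\times\Delta^{a_2-1}\times\Delta^{a_3-1}$, so $\mathcal Z_P=S^{2a_1-1}\times S^{2a_2-1}\times S^{2a_3-1}$, and the relations $X_i^2=0$, $X_iX_{i+1}=-X_{i+1}X_i=Y_i$, $X_1X_2X_3=Z$ follow directly from the K\"unneth theorem. The delicate step is the Mayer--Vietoris computation of case (ii) -- the interior/boundary decomposition and the identification $X\cap Y\simeq S^{|J|-4}$ -- together with verifying that all over-long and non-arc $A$ yield contractible $K_J$; a secondary technical point is choosing orientations so that $X_iY_{i+k-1}=+Z$ holds uniformly in $i$ rather than only $\pm Z$.
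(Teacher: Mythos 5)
Your proposal takes a genuinely different route from the paper. The paper works directly inside the Koszul model $R^*(P)=\Lambda[u_1,\dots,u_m]\otimes\mathbb Z[P]/(v_i^2=u_iv_i=0)$, exploiting the $\mathbb Z^m$-grading and the observation that any element of multidegree $\tau$ is cohomologous to one of the form $y\wedge u_i$ for any chosen $i\in\tau$; this yields explicit cocycle representatives $X_i=v_{\eta_{i-1}+1}\cdots v_{\eta_{i-1}+\varphi_i-1}u_{\eta_{i-1}+\varphi_i}$, $Y_i=u_{\eta_{i-1}+1}v\cdots v\,u_{\eta_{i-1}+\psi_i}$, $Z=u_\omega v_\sigma$, from which all products are computed by multiplying monomials in $R^*(P)$. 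You instead pass through Hochster's decomposition $\C^{-i,2j}\cong\bigoplus_{|J|=j}\tilde H^{j-i-1}(K_J)$ and Baskakov's product formula, reducing everything to the homotopy types of the full subcomplexes $K_J$ and a few restriction maps. The two viewpoints are of course equivalent (both compute $\Tor_{\mathbb Z[v]}(\mathbb Z[K],\mathbb Z)$), but yours makes the colour-class/Gale structure more visible, whereas the paper's has the advantage of producing concrete representatives so that the ring structure is read off without any further topological argument. Your reduction to saturated colour classes $A$ (via the join with a simplex on the partially saturated vertices) is exactly right, as is the identification of cases (i)--(iii) with $X_i,Y_j,Z$.

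That said, several of the steps you label as technical are the real content and must be carried out. First, a small slip: Baskakov's inclusion goes the other way, $K_{J_1\cup J_2}\hookrightarrow K_{J_1}*K_{J_2}$ (a face restricts to a face of each factor); the join generally has strictly more faces, so the reverse inclusion does not exist. Second, for $A$ an arc of length $k+1\leqslant |A|\leqslant 2k-2$ the complex $K_J$ is not a cone --- every block of such an arc lies in some forbidden segment $J_l\subseteq J$ --- so you cannot retract off a vertex, and the iterated Mayer--Vietoris (or equivalently the computation of the minimal free resolution of the ``chain'' of overlapping monomials $J_i,\dots,J_{i+|A|-k+1}$) genuinely has to be done; the disconnected case, by contrast, is easy, since at most one component of $A$ can have length $\geqslant k-1$ and hence $K_J$ is a join with a simplex. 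Third, the claim that $K_{[m]}\hookrightarrow K_{J_i}*K_{L_{i+k-1}}$ is a homotopy equivalence of $(n-1)$-spheres is plausible but not automatic, since the join has strictly more faces (any $J_l$ with $i<l<i+k-1$ straddles $J_i$ and $L_{i+k-1}$). A shorter way to get $X_iY_j=\delta_{i+k-1,j}\,Z$ is Poincar\'e duality: $\mathcal Z_P$ is an oriented $(2n+3)$-manifold, so the pairing $\C^{-1,2\varphi_i}\otimes\C^{-2,2\psi_j}\to\C^{-3,2(n+3)}$ is perfect; this forces $\varphi_i+\psi_j=n+3$, i.e.\ $j=i+k-1$, and then $X_iY_{i+k-1}=\pm Z$, with the sign fixed by a compatible choice of generators. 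With these points filled in, the argument goes through.
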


This in not surprising, since $\mathcal
Z_{P_{a_1,\,a_2,\,a_3}}=S^{2a_1-1}\times S^{2a_2-1}\times
S^{2a_3-1}$, and according to the results by Lopez de Medrano
\cite{LM} for $k\geqslant 3$ the manifold
$\mathcal{Z}_{P_{a_1,\dots,\,a_{2k-1}}}$ is homeomorphic to
$$
\sh\limits_{i=1}^{2k-1}S^{2\varphi_i-1}\times S^{2\psi_{i+k-1}-2},
$$
where $\varphi_i=a_i+\dots+a_{i+k-2}$, $\psi_j=a_j+\dots+a_{j+k-1}$,
and indices are taken modulo $2k-1$. See also \cite{BM}. Our result
describes additionally a bigraded structure in the cohomology ring
of the moment-angle manifold $\mathcal
Z_{P_{a_1,\dots,\,a_{2k-1}}}$.

We also describe some properties of the construction $P\to
P_{k_1,\dots,\,k_m}$ (see \cite{BBCG,BBCG2,GLM}).

The author is grateful to Victor\,M. Buchstaber for the formulation of the
problem and for his permanent attention to this work, to Taras Panov for
useful advises and comments, and to Yukiko Fukukawa and Mikiya Masuda for
sending their paper ``Buchstaber Invariants of Skeletons of a Simplex''.
\section{Some facts.}   It is known that
\begin{enumerate}
\item  $1\leqslant s(P)\leqslant m-n$. $S(P)=m-n$ if and only if there exists a
{\itshape characteristic map} from the set of facets $\mathfrak F$
to $\mathbb Z^n$ such that for every vertex $v=F_{i_1}\cap
F_{i_2}\dots\cap F_{i_n}$ the vectors corresponding to the facets
$F_{i_1},\dots, F_{i_n}$ form a basis of $\mathbb Z^n$. In this case
$\mathcal Z_{P}/T^{m-n}=M^{2n}$ is a {\itshape quasitoric manifold}.

\item Any polygon has a characteristic map, since we can assign to
it's edges the vectors $\begin{pmatrix}0\\1\end{pmatrix}$,
$\begin{pmatrix}1\\0\end{pmatrix}$,
$\begin{pmatrix}1\\1\end{pmatrix}$ in such a way that any two
consequent edges have different vectors.

\item Any $3$-dimensional simple polytope allows a characteristic
map, since there is a right colouring in $4$ colours, according to
the four colours theorem. We can assign the vectors
$\begin{pmatrix}1\\0\\0\end{pmatrix}$,
$\begin{pmatrix}0\\1\\0\end{pmatrix}$,
$\begin{pmatrix}0\\0\\1\end{pmatrix}$,
$\begin{pmatrix}1\\1\\1\end{pmatrix}$ to the facets coloured in the
colours $1$, $2$, $3$, and $4$ respectively.

\item $s(P)\geqslant m-\gamma(P)$, where $\gamma(P)$ is a {\itshape
chromatic number} of $P$ (I.Izmestiev, 2001, \cite{IZ}).

\item $s(P)\leqslant m-\lceil\log_2(\gamma(P)+1)\rceil$ (A.Aizenberg, 2009,
\cite{A}).

\item In fact, there are two dual combinatorial interpretations of
the $s$-number.
\begin{prop*}[See \cite{BP}]
\begin{enumerate}
\item $s(P)$ is the maximal positive integer such that there
exists the matrix $M$ of size $m\times s$ with integer entries
satisfying the property: each row $\ib{m}_i$ of $M$ corresponds to
the facet $F_i$ and if $F_{i_1},\dots,F_{i_n}$ intersect in a
vertex, then the columns of the matrix
$M\setminus\{\ib{m}_{i_1},\dots,\ib{m}_{i_n}\}$ form a part of some
basis of $\mathbb Z^{m-n}$.

\item $s(P)$ is the maximal positive integer such that there
exists a mapping $\mathfrak F\to\mathbb Z^{m-s}$ satisfying the
property: if $F_{i_1},\dots,F_{i_n}$ intersect in a vertex, then the
corresponding vectors form a part of some basis of $\mathbb
Z^{m-s}$.
\end{enumerate}
\end{prop*}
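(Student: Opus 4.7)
The approach is to unpack freeness of the $T^m$-action on $\mathcal Z_P$ into a lattice condition and translate it via Smith normal form. Since the stabilizer of $[(t,q)]\in\mathcal Z_P$ is the coordinate subtorus $T^{G(q)}$ and $G\subset G'$ forces $T^{G'}\subset T^G$, the stabilizers are maximal at vertices. Hence $H\subset T^m$ acts freely iff $H\cap T^v=\{e\}$ for every vertex $v=F_{i_1}\cap\dots\cap F_{i_n}$, where $T^v:=T^{F_{i_1}}\cdots T^{F_{i_n}}$ is the attached $n$-dimensional coordinate torus.

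Any $H\cong T^s\subset T^m$ corresponds to a saturated rank-$s$ sublattice of $\mathbb Z^m$, equivalently an integer $m\times s$ matrix $M$ whose columns extend to a $\mathbb Z$-basis of $\mathbb Z^m$; row $\mathbf m_i$ records the projection of $H$ to the $i$-th circle factor, hence is labelled by $F_i$. The map $\varphi\colon T^s\times T^v\to T^m$, $(h,t)\mapsto ht$, has kernel $\{(h,h^{-1}):h\in H\cap T^v\}$, so $H\cap T^v=\{e\}$ iff $\varphi$ is injective. By Smith normal form, injectivity of $\varphi$ amounts to the $m\times(s+n)$ block matrix $[M\mid E_v]$ (where $E_v$ has columns $e_{i_1},\dots,e_{i_n}$) having columns that form part of a basis of $\mathbb Z^m$, equivalently $\mathrm{im}(M)+\langle e_{i_1},\dots,e_{i_n}\rangle$ being saturated in $\mathbb Z^m$. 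Since the added summand is a coordinate sublattice, the projection $\mathbb Z^m\twoheadrightarrow\mathbb Z^{m-n}$ that erases coordinates $i_1,\dots,i_n$ is saturation-preserving on sublattices containing it, so the condition reduces to the columns of the deleted matrix $M\setminus\{\mathbf m_{i_1},\dots,\mathbf m_{i_n}\}$ forming part of a basis of $\mathbb Z^{m-n}$. This is (1). Conversely, any $M$ obeying (1) at even a single vertex automatically has columns extending to a basis of $\mathbb Z^m$ (complete inside $\mathbb Z^{m-n}$ and append the $e_{i_j}$'s), so the combinatorial admissible matrices correspond exactly to subtori, and the two maxima coincide.

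For (2), complete $M$ to a unimodular $m\times m$ matrix, yielding an exact sequence $0\to\mathbb Z^s\xrightarrow{M}\mathbb Z^m\xrightarrow{\pi}\mathbb Z^{m-s}\to 0$, and put $\mathbf v_i:=\pi(e_i)$. Under Gale duality the condition of (1) transforms into its dual: ``columns of $M\setminus\{\mathbf m_{i_1},\dots,\mathbf m_{i_n}\}$ form part of a basis of $\mathbb Z^{m-n}$'' is equivalent to ``$\mathbf v_{i_1},\dots,\mathbf v_{i_n}$ form part of a basis of $\mathbb Z^{m-s}$'', both being equivalent to unimodularity of the $m\times m$ matrix assembled from the columns of $M$ together with $e_{i_1},\dots,e_{i_n}$. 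Reversing the construction---taking the kernel of the map $\mathbb Z^m\to\mathbb Z^{m-s}$ defined by the facet vectors---recovers $M$, so the two maxima agree, proving (2).

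\textbf{Main obstacle.} The nontrivial content is the linear algebra over $\mathbb Z$: distinguishing ``injective torus homomorphism'' from bare ``injective lattice map'' and correctly identifying the first with ``columns extending to a basis''/``saturated image'', together with the quotient/dual passages between the two formulations. These are standard Smith-normal-form facts but are easy to confuse with the weaker condition of injectivity of the underlying lattice map, so this is the step I would write out most carefully.
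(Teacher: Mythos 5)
The paper itself contains no proof of this proposition: it is imported verbatim from \cite{BP} as a known fact, so there is no internal argument to compare yours against. Your route is the standard one (freeness of $H\subset T^m$ is equivalent to $H\cap T^{v}=\{e\}$ at every vertex $v$, since stabilizers are coordinate subtori and are maximal over vertices; this is then translated into the two lattice conditions), and it is correct in substance, including the key point that the vertex condition at even one vertex forces the columns of $M$ to be part of a basis of $\mathbb Z^m$, so admissible matrices really do correspond to embedded subtori. Two places should be tightened. First, in part (2) you justify the equivalence of the two vertex conditions by ``unimodularity of the $m\times m$ matrix assembled from the columns of $M$ together with $e_{i_1},\dots,e_{i_n}$''; that matrix is $m\times(s+n)$ and is square only in the extremal case $s=m-n$, so the correct intermediate statement is the one you already used in part (1): the columns of $[M\mid E_v]$ form part of a basis of $\mathbb Z^m$, which is equivalent to the condition on $M\setminus\{\ib{m}_{i_1},\dots,\ib{m}_{i_n}\}$ by passing to the quotient by $\langle e_{i_1},\dots,e_{i_n}\rangle$, and to the condition on $\ib{v}_{i_1},\dots,\ib{v}_{i_n}$ by passing to the quotient by $\operatorname{im}M$ (legitimate because $\operatorname{im}M$ is saturated). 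Second, in the converse direction of (2) the homomorphism $\Lambda\colon\mathbb Z^m\to\mathbb Z^{m-s}$, $e_i\mapsto\ib{v}_i$, attached to an admissible assignment need not be surjective (its rank can be anywhere from $n$ up to $m-s$), so ``taking the kernel recovers $M$'' is not literally an inverse construction; the repair is to note that $\ker\Lambda$ is a saturated sublattice of rank $m-\rk\Lambda\geqslant s$, that the corresponding subtorus meets every $T^{v}$ trivially because each $T^{v}\to T^{m-s}$ is injective by the part-of-basis hypothesis, and that any $s$ vectors from a basis of this kernel then give a matrix satisfying (1). With these two small repairs your proof is complete and is essentially the argument of \cite{BP}.
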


These two combinatorial interpretations allow us to define $s(K)$
for any simplicial complex $K$ -- we should substitute vertices of
$K$ for facets of $P$ and  simplices of maximal dimension for
vertices of $P$. Then $s(P)=s(\partial P^*)$.

\item If $s(P)=m-n$, then it is known that $b_{2i}=\rk
H_{2i}(M^{2n})=h_i(P)$ for any quasitoric manifold $M^{2n}$.

Similarly, we can consider the space $\mathcal Z_P/H$ for any toric
subgroup $H\cong T^r$ that acts freely on $\mathcal Z_P$. If $T$ is
the $(m-r)\times m$-matrix with integral entries that arises in the
second combinatorial interpretation of $s(P)$, then the cohomology
ring $\C^*(\mathcal Z_P/H)$ can be described in the following way.

\begin{defin}
The {\itshape Stanley-Reisner ring} (or the {\itshape face ring}) of
a simple polytope $P$ with $m$ facets $\{F_1,\dots, F_m\}$ is
defined as
$$
\mathtt k[P]=\mathtt k[v_1,\dots,v_m]/I,
$$
where $\mathtt k$ is a ring, each variable $v_i$ corresponds to the
facet $F_i$, and
$$
I=(\{v_{i_1},\dots,v_{i_k}:F_{i_1}\cap\dots\cap
F_{i_k}=\varnothing\}).
$$
\end{defin}

Then we have:
\begin{prop*}[See \cite{BP}]
\begin{enumerate}
\item There is an isomorphism of the algebras
$$ \C^*(\mathcal Z_P/H,\mathbb Q)\cong\Tor_{\mathbb
Q[t_1,\dots,t_{m-r}]}(\mathbb Q[P],\mathbb Q),
$$
where a structure of $\mathbb Q[t_1,\dots,t_{m-r}]$-module in the
Stanley-Reisner ring $\mathbb Q[P]$ is given by the mapping
$$
\mathbb Q[t_1,\dots,t_{m-r}]\to\mathbb Q[v_1,\dots,v_m],\quad t_i\to
t_{i1}v_1+\dots+t_{im}v_m.
$$

\item There is an isomorphism of the algebras:
$$
\C^*(\mathcal Z_P/H,\mathbb Q)\cong
\C[\Lambda[u_1,\dots,u_{m-r}]\otimes\mathbb Q[P],d],
$$
$$
du_i=t_{i1}v_1+\dots+t_{im}v_m,\quad dv_j=0.
$$
\end{enumerate}
\end{prop*}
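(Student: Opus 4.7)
The plan is to compute $H^{*,*}(\mathcal{Z}_P)$ via Hochster's formula
\[
H^{-p,2q}(\mathcal{Z}_P)\cong\bigoplus_{\omega\subseteq[m],\,|\omega|=q}\tilde H^{q-p-1}(K_\omega;\mathbb{Z}),\qquad K=\partial P^*,
\]
for the additive bigraded structure, and to read off the multiplicative structure from Baskakov's product $\tilde H^*(K_\omega)\otimes\tilde H^*(K_{\omega'})\to\tilde H^*(K_{\omega\cup\omega'})$ for disjoint $\omega,\omega'$ (zero otherwise), induced by the inclusion of the simplicial join $K_\omega*K_{\omega'}\hookrightarrow K_{\omega\cup\omega'}$.

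First I would make the Gale-diagram combinatorics explicit. Writing $A_i\subseteq\{F_1,\dots,F_m\}$ for the preimage of the $i$-th vertex of $M_{2k-1}$ (so $|A_i|=a_i$), the standard fact that three vertices of a regular $(2k-1)$-gon form a triangle containing the center iff they do not all lie in an arc of $k$ consecutive vertices, combined with item~14, implies that a subset $\tau\subseteq[m]$ is a non-face of $K$ iff the shadow $T(\tau)=\{i:A_i\subseteq\tau\}$ contains an arc of $k-1$ consecutive vertices of $M_{2k-1}$. In particular the minimal non-faces of $K$ are exactly $\omega_i=A_i\cup\dots\cup A_{i+k-2}$, $i\in\mathbb{Z}/(2k-1)$, of sizes $|\omega_i|=\varphi_i$. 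A case analysis on $T(\omega)$ then shows $K_\omega$ is acyclic except in four situations: (i) $\omega=\varnothing$, contributing the unit; (ii) $\omega=\omega_i$, where $K_\omega=\partial\Delta^{\varphi_i-1}$ is a sphere $S^{\varphi_i-2}$, contributing $X_i\in H^{-1,2\varphi_i}$; (iii) $\omega=A_j\cup\dots\cup A_{j+k-1}$ (a $k$-arc of full classes), where a Mayer--Vietoris decomposition of $K_\omega$ along the two consecutive minimal non-faces $\omega_j,\omega_{j+1}\subseteq\omega$ shows $K_\omega$ has the integral cohomology of $S^{\psi_j-3}$, contributing $Y_j\in H^{-2,2\psi_j}$; and (iv) $\omega=[m]$, where $K_\omega=K$ is an $(n-1)$-sphere, contributing $Z\in H^{-3,2(n+3)}$. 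In every other case $\omega$ strictly extends one of (ii), (iii) by extra vertices lying in classes that do not participate in any minimal non-face, so $K_\omega$ is a join of the smaller complex with a simplex (a cone), hence contractible. Summing over $\omega$ recovers the stated bigraded abelian group.

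For the products in the case $k\geq3$, Baskakov's formula gives: $X_iX_j=0$ because either $\omega_i\cap\omega_j\neq\varnothing$, or (when the $(k-1)$-arcs are disjoint in the cycle) $\omega_i\cup\omega_j$ is not one of the four special subsets above and so $\tilde H^*(K_{\omega_i\cup\omega_j})=0$; $Y_iY_j=0$ because any two $k$-arcs in a $(2k-1)$-cycle overlap (since $2k>2k-1$); and for $X_iY_j$, writing $\omega_j'=A_j\cup\dots\cup A_{j+k-1}$, the sets $\omega_i$ and $\omega_j'$ are disjoint iff the $(k-1)$-arc and the $k$-arc are complementary in the cyclic order, i.e., iff $j=i+k-1$, in which case $\omega_i\cup\omega_j'=[m]$ and the join-to-union map $S^{\varphi_i-2}*S^{\psi_{i+k-1}-3}\simeq S^{n-1}\to K\simeq S^{n-1}$ is checked at the chain level to have degree $\pm1$ (using that the fundamental class of $K$ is uniquely characterized up to sign), yielding $X_iY_{i+k-1}=\pm Z$; the sign is absorbed into the normalization of the generators to give $X_iY_j=\delta_{i+k-1,j}Z$.

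For $k=2$ the polytope $P_{a_1,a_2,a_3}$ is dual to the join $\partial\Delta^{a_1-1}*\partial\Delta^{a_2-1}*\partial\Delta^{a_3-1}$, so $\mathcal{Z}_P=S^{2a_1-1}\times S^{2a_2-1}\times S^{2a_3-1}$ via $\mathcal{Z}_{L_1*L_2}\cong\mathcal{Z}_{L_1}\times\mathcal{Z}_{L_2}$; the cohomology is $\Lambda_{\mathbb{Z}}[X_1,X_2,X_3]$ with $\bideg X_i=(-1,2a_i)$, and setting $Y_i:=X_iX_{i+1}$, $Z:=X_1X_2X_3$ verifies the stated relations. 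The main obstacle is case (iii) of the classification: verifying that $K_{A_j\cup\dots\cup A_{j+k-1}}$ has integral cohomology concentrated in degree $\psi_j-3$ with value $\mathbb{Z}$ for arbitrary $a_i\geq1$. The equal-$a_i$ case reduces to the boundary of the $(k-1)$-simplex with two top-dimensional open cells removed, homotopy equivalent to $S^{k-2}\setminus(D^{k-2}\sqcup D^{k-2})\simeq S^{k-3}$; the general case requires carefully tracking via Mayer--Vietoris the fillings coming from the vertices of non-full classes $A_l$ with $l\notin T(\omega)$ to ensure no spurious cohomology appears in intermediate degrees.
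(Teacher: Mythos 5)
Your proposal does not prove the statement in question. The statement is the general structural result quoted from \cite{BP}: for an arbitrary simple polytope $P$ and an arbitrary toric subgroup $H\cong T^r$ acting freely on $\mathcal Z_P$, the rational cohomology of the quotient $\mathcal Z_P/H$ is isomorphic as an algebra to $\Tor_{\mathbb Q[t_1,\dots,t_{m-r}]}(\mathbb Q[P],\mathbb Q)$, with the $\mathbb Q[t_1,\dots,t_{m-r}]$-module structure on the face ring induced by the matrix $(t_{ij})$ describing $H$, together with the equivalent Koszul-complex description $\C[\Lambda[u_1,\dots,u_{m-r}]\otimes\mathbb Q[P],d]$. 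What you actually sketch is a computation of the bigraded ring $\C^{*,\,*}(\mathcal Z_{P_{a_1,\dots,\,a_{2k-1}}})$ for simple polytopes with $n+3$ facets, which is a different theorem (proved later in the paper by working directly in the algebra $R^*(P)=\Lambda[u_1,\dots,u_m]\otimes\mathbb Z[P]/(v_i^2=u_iv_i=0)$). Nowhere in your argument do the subgroup $H$, the quotient $\mathcal Z_P/H$, the polynomial ring on $m-r$ generators, the module structure $t_i\mapsto t_{i1}v_1+\dots+t_{im}v_m$, or the role of freeness of the action appear; consequently neither item 1 nor item 2 of the proposition is addressed, for general $P$ or otherwise.

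Moreover, even in the special case $H=\{1\}$ your route would be circular: Hochster's decomposition and Baskakov's product formula for $\C^{*,\,*}(\mathcal Z_P)$ are themselves derived from the isomorphism of this cohomology with the Tor-algebra of the face ring, i.e.\ from (the $r=0$ case of) the very proposition to be proved, so they cannot serve as its starting point. A proof along the lines of \cite{BP} has to use the toric machinery: since $H$ acts freely, the torus $T^m/H\cong T^{m-r}$ acts on $\mathcal Z_P/H$, the Borel construction of this action is homotopy equivalent to the Davis--Januszkiewicz space whose rational cohomology is $\mathbb Q[P]$, and the Eilenberg--Moore spectral sequence of the resulting fibration of $\mathcal Z_P/H$ over $BT^{m-r}$ (which collapses over $\mathbb Q$) yields item 1; item 2 then follows by resolving $\mathbb Q$ by the Koszul resolution over $\mathbb Q[t_1,\dots,t_{m-r}]$. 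Your Gale-diagram and Mayer--Vietoris analysis is pertinent to the later theorem on $P_{a_1,\dots,\,a_{2k-1}}$ (and could be compared with the paper's direct cocycle computation there), but it does not constitute a proof of the quoted proposition.
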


In some sense we can consider dimensions of the cohomology groups
$\C^i(\mathcal Z_P)$ as analogues of $h$-numbers depending on the
subgroup $H$.

Let us continue the list of the properties of $P$:

\item Let us consider a simplicial complex $U_k$ (See \cite{DJ}).
Vertices of $U_k$ correspond to vectors in $\mathbb Z^k$. The set of
vertices $v_1,\dots,v_l$ form a simplex if and only if the
corresponding vectors form a part of some basis of $\mathbb Z^k$.
Then, using the second combinatorial description of the $s$-number,
we see that $s(K)$ is the maximal integer $s$ such that there exists
a non-degenerate simplicial map $K\to U_{m-s}$. (See [A]).

\item This property allows us to put $s(P)$ into the series of
invariants of the following form: given the series of simplicial
complexes $L=(L_1,L_2,\dots,L_k,\dots)$ with non-degenerate maps
$L_i\to L_{i+1}$ let us define the $L$-invariant of $K$ as the
minimal number $k$ such that there exists a non-degenerate map $K\to
L_k$.

If we take $L_k=\Delta^{k-1}$, then $L(K)=\gamma(K)$. For the
$k$-dimensional skeletons of an infinite-dimensional simplex
$L_k=\Delta^{\infty}_k$ we have $L(K)=\dim K$ (see \cite{A}).

In fact, in the case of such invariants we can consider the classes
of equivalence $K_1\sim K_2$ if and only if there exist
non-degenerate simplicial  maps $f_1:K_1\to K_2$ and $f_2:K_2\to
K_1$. Then we have the next property:

\item The $1$-dimensional skeleton of $U_k$ is equivalent to the full
graph $K_{2^k-1}$. As a consequence, for a graph $\Gamma$ we have
$s(\Gamma)= m-\lceil\log_2(\gamma(\Gamma)+1)\rceil$ (see \cite{A}).

\item In the dimension $3$ every matrix with all entries $0$ and $1$,
that has the determinant $1$ over $\mathbb Z_2$, has the determinant
$\pm 1$ over $\mathbb Z$. So in the case of  matrices $m\times 3$ we
can substitute $\mathbb Z_2$ for $\mathbb Z$. The same observation
can be used to prove  that the $2$-dimensional skeleton of $U_k$ is
equivalent to the $2$-dimensional skeleton of $U_k(2)$, where
$U_k(2)$ is a simplicial complex with vertices corresponding to the
vectors in $\mathbb Z_2^k$ and simplices corresponding to the sets
of vectors that are linearly independent over $\mathbb Z_2$.

\item There exists a non-degenerate mapping $U_k*U_l\to U_{k+l}$.

If $\sigma^{l-1}$ is a $(l-1)$-dimensional simplex in $U_k$, then
$\link_{U_k}\sigma^{l-1}\simeq U_{k-l}$ (see \cite{A}).

\item $U_k$ is $(k-2)$-connected \cite{DJ}.

\item  Let $\Delta^{m-1}_{n-1}$ be a
$(n-1)$-dimensional skeleton of a $(m-1)$-dimensional simplex. Then
\cite{A}
\begin{gather*}
m-s(\Delta^{m-1}_{n-1})\geqslant\lceil\log_2(m-n+3)\rceil+n-2;\\
m-s(\Delta^{m-1}_2)=\lceil\log_2 m \rceil+1;\\
s(\Delta^{m-1}_{m-3})=1.
\end{gather*}

\item If we substitute $\mathbb Z_2$ for $\mathbb Z$, then we can define
    $s_{\mathbb R}(K)$ in the similar way as $s(K)$ according to the
    combinatorial definitions. Then, clearly, $s(K)\leqslant s_{\mathbb
    R}(K)$. Y.~Fukukawa and M.~Masuda \cite{FM} investigated the case of
    $K=\Delta^{m-1}_{n-1}=\Delta^{m-1}_{m-1-p}$ and proved the following
    facts:

Let us denote $s(m,p)=s(\Delta^{m-1}_{m-1-p})$ and
$\sr(m,p)=\sr(\Delta^{m-1}_{m-1-p})$. Then $s(m,0)=0$, and for
$p\geqslant 1$ we have:
\begin{itemize}
\item[\textbf{I.}]
\begin{itemize}
\item[a)] $1\leqslant s(m,p)\leqslant p\;$ and $s_{\mathbb R}(m,p)=p$ if and only if
$p=1,m-1,m$.
\item[b)] $s(m,p)$ increases as $p$ increases and decreases as $m$
increases.
\item[c)] If $m-p$ is even, then $s_{\mathbb R}(m+1,p)=s_{\mathbb R}(m,p)$.
\item[d)] $s_{\mathbb R}(m+1,m-2)=s_{\mathbb R}(m,m-2)=[m-\log_2(m+1)]$
for $m\geqslant 3$.
\end{itemize}
\item[\textbf{II.}]
\begin{itemize}
\item[a)] $\sr(m,p)=1$ if and only if $m\geqslant 3p-2$ and there is a
non-negative integer $m_k(b)$ associated to $k\geqslant 2$ and
$b\geqslant 0$ such that
$$
\sr(m,p)=k\mbox{ if and only if }m_{k+1}(p-1)<m\leqslant m_k(p-1),
$$
and $m_k(b)$ is the maximum integer that the linear function
$\sum\limits_{v\in(\mathbb Z_2)^k\setminus\{0\}}a_v$ takes on
lattice points $(a_v)$ in $\mathbb R^{2^k-1}$ satisfying these
$(2^k-1)$ inequalities:
$$
\sum\limits_{(u,v)=0}a_v\leqslant b\mbox{ for each }u\in(\mathbb
Z_2)^k\setminus\{0\}
$$
and $a_v\geqslant 0$ for every $v$, where $\mathbb Z_2=\{0,1\}$ and
$(\;,\;)$ denotes the standard scalar product on $(\mathbb Z_2)^k$.
\item[b)] Let $b=(2^{k-1}-1)Q+R$ with non-negative integers $Q,R$ with
$0\leqslant R\leqslant 2^{k-1}-2$. Let $2^{k-1}-2^{k-1-l}\leqslant
R<2^{k-1}-2^{k-1-(l+1)}$ for some $0\leqslant l\leqslant k-2$. Then
$$
(2^k-1)Q+R+2^{k-1}-2^{k-1-l}\leqslant m_k(b)\leqslant (2^k-1)Q+2R,
$$
and the lower bound is attained if and only if
$R-(2^{k-1}-2^{k-1-l})\leqslant k-l-2$ and the upper bound is
attained if and only if $R=2^{k-1}-2^{k-1-l}$.
\end{itemize}
\end{itemize}
They also conjectured that $m_k((2^{k-1}-1)Q+R)=(2^k-1)Q+m_k(R)$,
supplied the conjecture with many computations and proved that
$m_k(2^{k-1}-1+b)=2^k-1+m_k(b)$ for
$b\geqslant(2^{k-1}-1)(2^{k-2}-1)$.

Let us mention that the second part of \textbf{I.}d) follows from
10., and part 9. of the main theorem follows from \textbf{II}.b) for
$k=2$ and $k=3$.

\item Every {\itshape nestohedron} can be realized as a Delzant
polytope (A.Zelevinsky, 2006, \cite{Z}). In particular, $s(P_B)=m-n$
for every nestohedron $P_B$.
\end{enumerate}
\section{Main result.}

\subsection{Construction $P\to P_{k_1,\dots,\,k_m}$}
\textbf{I.} Let $P$ be a simple polytope

$$
P=\{\ib{x}\in\mathbb R^n, A_{p}\ib{x}+\ib{b}_{p}\geqslant
0,A_p\in\mathbb R^{m\times n}, \ib{b}_p\in\mathbb R^m\}.
$$

Then the mapping $$\ib{x}\to A_{p}\ib{x}+\ib{b}_{p}$$ defines an
embedding of $P$ into $\mathbb R^m_{+}=\{\ib{y}\in\mathbb R^m:
y_i\geqslant 0,\,i=1,\dots,m\}$. We can find a matrix $C=\{C_{ij},
i=1,\dots,m-n;\,j=1,\dots,m\}$ of rank $m-n$ such that $C_pA_p=0$.
Then
$$
P=\{\ib{y}\in\mathbb R^m_{+}:C_p\ib{y}=C_p\ib{b}_p\}.
$$

Let us define a polytope $P_{k_1,\dots,\,k_m},\, k_i\geqslant 1$ as
(\cite{BBCG,BBCG2}, see also \cite{GLM}, \cite{U})
\begin{multline*}
P_{k_1,\dots,\,k_m}=\{\ib{y}=(y^1_1,\dots,y^{k_1}_1,\dots,y^1_m,\dots,y^{k_m}_m)\in\mathbb
R^{k_1+\dots+k_m}_{+}:\\
\sum\limits_{j=1}^mC_{ij}\left(\sum\limits_{l=1}^{k_j}y^{l}_j\right)=\sum\limits_{j=1}^mC_{ij}b_j,\
i=1,\dots,m-n.\}
\end{multline*}

Then
$$
P_{k_1,\dots,\,k_m}=\{\ib{y}\in\mathbb R^{k_1+\dots+k_m}_{+} :
y^1_j+\dots+y^{k_j}_j=\ib{a}_j\ib{x}+b_j, j=1,\dots,m\mbox{ for some
} \ib{x}\in \mathbb R^n\},
$$
where $\{\ib{a}_j\}$ are the row-vectors of the matrix $A_p$.

If we eliminate $y^{k_j}_j$  for $j=1,\dots, m$ and denote
$Y=(y^1_1,\dots,y^{k_1-1}_1,\dots,y^1_m,\dots,y^{k_m-1}_m)$, then
$$
P_{k_1,\dots,\,k_m}:=\{(\ib{x},Y)\in\mathbb
R^{n+(k_1-1)+\dots+(k_m-1)} :
\ib{a}_j\ib{x}+b_j\geqslant\sum\limits_{l=1}^{k_j-1}y^l_j,\quad
y^l_j\geqslant 0\}.
$$

\begin{prop}
$P_{k_1,\dots,\,k_m}$ is a simple polytope of dimension
$n+(k_1-1)+\dots+(k_m-1)$ and
$$
\mathbb Z[P_{k_1,\dots,\,k_m}]= \mathbb
Z[v^1_1,\dots,v^{k_1}_1,\dots,v^1_m,\dots,v^{k_m}_m]/J,\quad
J=(v^1_{i_1}\dots v^{k_{i_1}}_{i_1}\dots v^1_{i_l}\dots
v^{k_{i_l}}_{i_l}: v_{i_1}\dots v_{i_l}\in I).
$$
\end{prop}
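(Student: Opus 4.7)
The plan has two halves: first I would establish simplicity and the dimension by working with the projection $\pi\colon P_{k_1,\dots,\,k_m}\to P$, $(\ib{x},Y)\mapsto\ib{x}$; second I would deduce the Stanley--Reisner ring from a face-lattice description.

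\textbf{Dimension and simplicity.} The polytope sits in $\mathbb R^{n+\sum(k_j-1)}$, and it is straightforward to exhibit an interior point: pick $\ib{x}$ interior to $P$ and $y^l_j>0$ small enough that $\sum_{l<k_j}y^l_j<\ib{a}_j\ib{x}+b_j$. Over the relative interior of a face $F\subset P$ with $I(F):=\{j:F\subset F_j\}$, the fibre $\pi^{-1}(\ib{x})$ is the product of simplices $\prod_{j\notin I(F)}\Delta^{k_j-1}$ (with $y^l_j\equiv 0$ for $j\in I(F)$). Hence a vertex of $P_{k_1,\dots,\,k_m}$ must project to a vertex $\ib{x}$ of $P$ and lie above a vertex of the corresponding product of simplices; counting active facets yields $\sum_{j\in I(\ib{x})}k_j+\sum_{j\notin I(\ib{x})}(k_j-1)=n+\sum(k_j-1)$, matching the dimension. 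To verify linear independence of the gradients, I would row-reduce block-by-block in $j$ (the $y^l_j$-coordinates are disjoint across $j$): after eliminating the $y^l_j=0$ rows from each $v^{k_j}_j$-row, one is left with either a pure-$\ib{x}$ equation $\ib{a}_j\ib{x}=-b_j$ (for $j\in I(\ib{x})$) or an equation $y^{l_0}_j-\ib{a}_j\ib{x}=b_j$ involving one fresh $y$-coordinate (for $j\notin I(\ib{x})$ at an $e_{l_0}$-vertex of the fibre). Simplicity of $P$ guarantees that the $n$ vectors $\ib{a}_{j_s}$ for $j_s\in I(\ib{x})$ span $\mathbb R^n$, so the reduced matrix has full rank $n+\sum(k_j-1)$, and $P_{k_1,\dots,\,k_m}$ is simple.

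\textbf{Stanley--Reisner ring.} The combinatorial heart will be the lemma that a set $S\subseteq\{v^l_j\}$ of facets has nonempty intersection if and only if the ``saturated'' subset
$$
J_0(S)=\bigl\{j:\{v^1_j,\dots,v^{k_j}_j\}\subseteq S\bigr\}
$$
is a face of $P$. One direction is forced by the defining equations: whenever all $k_j$ generators of index $j$ lie in $S$, the active constraints $y^l_j=0$ for $l<k_j$ together with $\sum_l y^l_j=\ib{a}_j\ib{x}+b_j$ force $\ib{a}_j\ib{x}+b_j=0$, so the $\ib{x}$-projection lies on $F_j$; if $\bigcap_{j\in J_0(S)}F_j=\varnothing$, then so is the intersection in $P_{k_1,\dots,\,k_m}$. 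For the converse I would start from $\ib{x}$ in the relative interior of $\bigcap_{j\in J_0(S)}F_j$ and choose $(y^l_j)_{l<k_j}$ fibrewise: for each $j\notin J_0(S)$ at least one generator is absent from $S$, leaving enough freedom inside $\Delta^{k_j-1}$ to place the fibre coordinate at the vertex activating precisely the facets of $S$ with second index $j$. The minimal non-faces of $P_{k_1,\dots,\,k_m}$ are then exactly the sets $\{v^t_{i_s}:1\leqslant s\leqslant l,\ 1\leqslant t\leqslant k_{i_s}\}$ where $\{i_1,\dots,i_l\}$ runs over minimal non-faces of $P$, which yields the stated generators of $J$.

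\textbf{Main obstacle.} The hardest step will be establishing linear independence of the active gradients at a vertex, because the $v^{k_j}_j$-rows mix $\ib{x}$ and $y$ coordinates and a naive rank count risks missing dependencies. The block-wise row reduction above, combined with the input from simplicity of $P$ at $\ib{x}$ that the $n$ normals $\ib{a}_{j_s}$ form a basis of $\mathbb R^n$, is what closes the argument. Once the fibre description of $\pi$ is in hand, the face-lattice analysis for the Stanley--Reisner ring becomes routine bookkeeping.
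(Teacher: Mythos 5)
Your proof is correct and follows essentially the same plan as the paper's: both reduce the Stanley--Reisner description to the key observation that a collection of facets $\{F^l_j\}$ has nonempty intersection in $P_{k_1,\dots,\,k_m}$ if and only if the ``saturated'' indices $j$ (those with all $k_j$ copies present) index a face of $P$, and both characterize vertices as lying over a vertex of $P$ together with a vertex of the product of fibre simplices. The only real divergence is in the simplicity step: you verify linear independence of the active gradients directly by block row reduction, while the paper argues by pigeonhole that more than $n+\sum(k_j-1)$ active facets would force at least $n+1$ fully saturated indices, contradicting the simplicity of $P$; both arguments are short and correct, so this is a variation in technique rather than a genuinely different route.
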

\begin{proof}
Consider a vertex $v$ of $P_{k_1,\dots,\,k_m}$. If the number of
facets intersecting in $v$ is greater than
$n+(k_1-1)+\dots+(k_m-1)$, then there are at least $n+1$ indices
$\{i_1,\dots,i_{n+1}\}$ such that $y^l_{i_s}=0$ for all
$l=1,\dots,k_{i_s}-1$ and $\ib{a}_{i_s}\ib{x}+b_{i_s}=0, s=1,\dots,
n+1$ for some $\ib{x}\in \mathbb R^n$. But
$\ib{a}_j\ib{x}+b_j\geqslant\sum\limits_{l=1}^{k_j-1}y^l_j\geqslant
0$ for all $j$, so $\ib{x}\in P$. Thus we have a contradiction,
because $P$ is simple.

So each vertex $v$ of $P_{k_1,\dots,\,k_m}$ belongs to exactly
$n+(k_1-1)+\dots+(k_m-1)$ facets. In fact, to avoid the contradiction there
should be $n$ indices $\{i_1,\dots,i_n\}$ such that $y^l_{i_s}=0$ for all
$l=1,\dots,k_{i_s}-1$ and $\ib{a}_{i_s}B+b_s=0,\, s=1,\dots,n$. Here $B$ is
the corresponding vertex of $P$. All other variables
$y^l_j,j\notin\{i_1,\dots,i_n\}$ should satisfy the following condition. For
each $j$ either all variables $y^1_j,\dots,y^{k_j-1}_j$ are equal to zero or
only one of them is nonzero and it is equal to $\ib{a}_jB+b_j$. It is easy to
see that in each case we have a vertex of $P_{k_1,\dots,\,k_m}$.

Let us denote the facet $y^l_j=0$ by $F^l_j$ and the facet
$\ib{a}_j\ib{x}+b_j=\sum\limits_{l=1}^{k_j-1}y^l_j$ by $F^{k_j}_j$.
Now consider some collection $\mathcal G$ of facets of
$P_{k_1,\dots,\,k_m}$. Let $\{i_1,\dots,i_l\}$ be the set of indices
such that $F^1_{i_s},\dots,F^{k_{i_s}}_{i_s}\in \mathcal G$ for each
$i_s$. If $F_{i_1}\cap\dots\cap F_{i_l}\ne\varnothing$, then we can
find a vertex $B=F_{i_1}\cap\dots\cap F_{i_l}\cap
F_{i_{l+1}}\cap\dots\cap F_{i_n}$. For each index
$j\notin\{i_1,\dots,i_n\}$ there are at most $k_j-1$ facets in
$\mathcal G$, so we can construct some vertex $v$ of
$P_{k_1,\dots,\,k_m}$ that lies in all facets in $\mathcal G$.

On the other hand, if $F_{i_1}\cap\dots\cap F_{i_l}=\varnothing$,
then there are no vertices of $P_{k_1,\dots,\,k_m}$ that belong to
all facets in $\mathcal G$.

So the collection $\mathcal G$ defines a face if and only if the
corresponding monomial in $\mathbb Z[P_{k_1,\dots,\,k_m}]$ isn't
divided by a monomial of the form $v^1_{i_1}\dots
v^{k_{i_1}}_{i_1}\dots v^1_{i_l}\dots v^{k_{i_l}}_{i_l}$, where
$v_{i_1}\dots v_{i_l}\in I$.
\end{proof}

\textbf{II.} Let $\lambda=\langle \lambda, \ib{x}\rangle \in(\mathbb
R^n)^*$ be a generic linear function, that is it takes different
values on vertices connected by an edge. This function induces the
orientation of edges from the smaller vertex to the greater. Then
each vertex $v=F_{i_1}\cap\dots\cap F_{i_n}$ corresponds to the
monomial $m_v=x_{i_1}\dots x_{i_n}\in\mathbb
Z[\alpha_1,t_1,\dots,\alpha_m,t_m]$, where
$$
x_{i_k}=\begin{cases}
\alpha_{i_k},& \mbox{ if the edge }
e=F_{i_1}\cap\dots\widehat{F_{i_k}}\cap\dots\cap F_{i_n} \mbox{ is
ingoing, and}\\
t_{i_k}, &\mbox{ if the edge } e \mbox{ is outgoing.}
\end{cases}
$$

Then we can consider the $h$-polynomial
$$
h_{\lambda}(P)(\alpha_1,t_1,\dots,\alpha_m,t_m)=\sum\limits_{v}m_v.
$$
It is known that
$h_{\lambda}(P)(\alpha,t,\alpha,t,\dots,\alpha,t)=H(\alpha,t)=\sum\limits_{i=0}^nh_i\alpha^{n-i}t^i$
is a usual $H$-polynomial.

Each vertex $B=F_{i_1}\cap\dots\cap F_{i_n}$ of $P$ corresponds to
$k_1\dots\widehat{k_{i_1}}\dots\widehat{k_{i_n}}\dots k_m$ vertices
of  $P_{k_1,\dots,\,k_m}$, which are obtained by choosing for each
$j\notin\{i_1,\dots,i_n\}$ one facet $F^l_j$ that doesn't contain
the vertex.

Let us take the linear function $\Lambda=(\lambda,\ib{c})\in(\mathbb
R^{n+(k_1-1)+\dots+(k_m-1)})^*$, where
$$
\ib{c}=(c^1_1,\dots,c^{k_1-1}_1,\dots,c^1_m,\dots,c^{k_m-1}_m),\quad
c^1_j<c^2_j<\dots<c^{k_j-1}_j<0,\quad \mbox{and }|c^l_j|\ll\lambda.
$$
The vertex $v=(B,Y)$ is connected by edges with
$(k_1-1)+(k_2-1)+\dots+\widehat{(k_{i_1}-1)}+\dots+\widehat{(k_{i_n}-1)}+\dots+
(k_m-1)$ vertices $v'$ of the same form, that are obtained from $v$
by changing $\{y^l_j\}$ for some $j$. In each case
$\langle\Lambda,v\rangle<\langle\Lambda,v'\rangle$ if and only if we
substitute $F^{l'}_j$ for $F^l_j$ with $l<l'$.

The rest $n+(k_{i_1}-1)+\dots+(k_{i_n}-1)=k_1+\dots+k_n$ adjacent
vertices have the form $(B',Y')$, where
$B'=F_{i_1}\cap\dots\cap\widehat{F_{i_s}}\cap\dots\cap F_{i_n}\cap
F_t$ is adjacent to $B$ in $P$,
$y'^1_{i_s}=\dots=y'^{k_{i_s}-1}_{i_s}=0$ or
$y'^1_{i_s}=\dots=\widehat{y'^l_{i_s}}=\dots=y'^{k_{i_s}-1}_{i_s}=0$
and $y'^l_{i_s}=\ib{a}_{i_s}B'+b_{i_s}$ for some $l$, and
$$
y'^l_j=\begin{cases}
0,&j\in\{i_1,\dots,\widehat{i_s},\dots,i_n,t\},\\
0,&j\notin\{i_1,\dots,i_n,t\}\mbox{ and } y^l_j=0,\\
\ib{a}_jB'+b_j,&j\notin\{i_1,\dots,i_n,t\}\mbox{ and }
y^l_j=\ib{a}_jB+b_j.
\end{cases}
$$

In this case the edge connecting $(B,Y)$ and $(B',Y')$ is ingoing if
an only if the corresponding edge connecting $B$ and $B'$ in $P$ is
ingoing.

Then the sum of monomials $m_{(B,Y)}$ over all vertices $(B,Y)$ with
fixed $B$ is equal to
$$
\sum\limits_{(B,Y)}m_{(B,Y)}=x^1_{i_1}\dots
x^{k_{i_1}}_{i_1}\dots x^1_{i_n}\dots
x^{k_{i_n}}_{i_n}\prod\limits_{j\notin\{i_1,\dots,i_n\}}(\alpha^1_j\dots\alpha^{k_j-1}_j+\alpha^1_j\dots\alpha^{k_j-2}_jt^{k_j}_j+\dots+\alpha^1_jt^3_j\dots
t^{k_j}_j+t^2_j\dots t^{k_j}_j),
$$
where $x^l_j=\alpha^l_j$ if $x_j=\alpha_j$ and $x^l_j=t^l_j$ if
$x_j=t_j$.

If we denote
$\gamma_j=\alpha^1_j\dots\alpha^{k_j-1}_j+\alpha^1_j\dots\alpha^{k_j-2}_jt^{k_j}_j+\dots+\alpha^1_jt^3_j\dots
t^{k_j}_j+\dots+t^2_j\dots t^{k_j}_j$, then we have:
\begin{prop}
$$
h_{\Lambda}(P_{k_1,\dots,\,k_m})=\gamma_1\dots\gamma_m
h_{\lambda}(P)(\frac{\alpha^1_1\dots\alpha^{k_1}_1}{\gamma_1},\frac{t^1_1\dots
t^{k_1}_1}{\gamma_1},\dots,\frac{\alpha^1_m\dots\alpha^{k_m}_m}{\gamma_m},\frac{t^1_m\dots
t^{k_m}_m}{\gamma_m}).
$$
\end{prop}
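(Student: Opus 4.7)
The idea is to sum $m_{(B,Y)}$ first over $Y$ with $B$ fixed---giving a factored expression with one factor per index $j$---and then over the vertices $B$ of $P$. Multiplying and dividing each $B$-summand by $\gamma_{i_1}\cdots\gamma_{i_n}$ pulls out a universal $\prod_{j=1}^m\gamma_j$ and leaves a sum that is recognisable as $h_\lambda(P)$ evaluated at the substitution $\alpha_j\mapsto\alpha^1_j\cdots\alpha^{k_j}_j/\gamma_j$, $t_j\mapsto t^1_j\cdots t^{k_j}_j/\gamma_j$ given in the statement.

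Concretely, for $B=F_{i_1}\cap\dots\cap F_{i_n}$ I would first verify the inner identity
$$
\sum_{Y}m_{(B,Y)}=\prod_{s=1}^n x^1_{i_s}\cdots x^{k_{i_s}}_{i_s}\cdot\prod_{j\notin\{i_1,\dots,i_n\}}\gamma_j,
$$
which is exactly the formula derived in the paragraph preceding the statement. The derivation splits the edges at $(B,Y)$ into two families. Slanted edges $(B,Y)\to(B',Y')$ correspond to edges $B\to B'$ of $P$ and appear opposite the facets $F^l_{i_s}$, $l=1,\dots,k_{i_s}$; by the observation already recorded, such an edge is $\Lambda$-ingoing iff $B\to B'$ is $\lambda$-ingoing, so $F^l_{i_s}$ contributes $\alpha^l_{i_s}$ or $t^l_{i_s}$ in accordance with the factor $x_{i_s}$ appearing in $m_B$. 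Horizontal edges keep $B$ fixed and modify a single missed index: at index $j\notin\{i_1,\dots,i_n\}$ with missed facet $F^{M_j}_j$, the edge opposite $F^l_j$ (with $l\ne M_j$) leads to the vertex with new missed index $l$, and the ordering $c^1_j<\dots<c^{k_j-1}_j<0$ (with the convention $c^{k_j}_j=0$) makes $\Lambda$ strictly increasing in the missed index, so this edge is ingoing iff $l<M_j$. Hence index $j$ contributes $\alpha^1_j\cdots\alpha^{M_j-1}_j\,t^{M_j+1}_j\cdots t^{k_j}_j$, and summing over $M_j=1,\dots,k_j$ yields exactly $\gamma_j$; because the choices of $M_j$ for distinct $j$ are independent, the sum $\sum_Y$ factors as asserted.

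Summing the inner identity over $B$ and multiplying and dividing each summand by $\gamma_{i_1}\cdots\gamma_{i_n}$ extracts the universal factor $\prod_{j=1}^m\gamma_j$ and leaves the residual sum $\sum_B\prod_{s=1}^n(x^1_{i_s}\cdots x^{k_{i_s}}_{i_s})/\gamma_{i_s}$. Since $x^1_j\cdots x^{k_j}_j$ equals $\alpha^1_j\cdots\alpha^{k_j}_j$ or $t^1_j\cdots t^{k_j}_j$ according as $x_j=\alpha_j$ or $x_j=t_j$, this residual sum is precisely $h_\lambda(P)$ under the substitution in the statement, which completes the proof. The only delicate point is the sign bookkeeping in the horizontal-edge analysis---specifically the strict monotonicity of $\Lambda$ in the missed index, which encodes exactly the $k_j$ terms making up $\gamma_j$---after which everything is a formal factorisation.
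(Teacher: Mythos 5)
Your proof is correct and follows essentially the same route as the paper: split the edges of $P_{k_1,\dots,k_m}$ at a vertex $(B,Y)$ into the "slanted" ones matching an edge of $P$ (whose orientation agrees with the orientation in $P$ because $|c^l_j|\ll\lambda$) and the "horizontal" ones changing only the missed index at some $j$ (whose orientation is governed by the ordering $c^1_j<\dots<c^{k_j-1}_j<0$), derive the inner factorization $\sum_Y m_{(B,Y)}=\prod_s x^1_{i_s}\cdots x^{k_{i_s}}_{i_s}\cdot\prod_{j\notin\{i_1,\dots,i_n\}}\gamma_j$, and then multiply and divide by $\prod_s\gamma_{i_s}$ to pull out the universal $\prod_{j=1}^m\gamma_j$ and recognise the residual sum as the stated substitution into $h_\lambda(P)$. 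This is exactly the argument the paper gives in the paragraphs preceding the proposition.
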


\textbf{III.} It should be mentioned that the previous construction
can be easily handled using {\itshape Gale transforms}.
\begin{defin}
Let $V=(\ib{v}_1,\dots,\ib{v}_m)$ be a point configuration in $\mathbb R^n$
such that $\aff(V)=\mathbb R^n$. Let us find a basis
$(\ib{u}_1,\dots,\ib{u}_{m-n-1})$ in the space of affine dependences
$$
\{\ib{c}\in\mathbb R^m:
c_1\bigl(\begin{smallmatrix}\ib{v}_1\\1\end{smallmatrix}\bigr)+\dots+c_m\bigl(\begin{smallmatrix}\ib{v}_m\\1\end{smallmatrix}\bigr)=0\}.
$$
Consider an $m\times(m-n-1)$-matrix $U$ formed by the column vectors
$\ib{u}_1,\dots,\ib{u}_{m-n-1}$. Then the row vectors
$\overline{\ib{v}}_1,\dots,\overline{\ib{v}}_m\in\mathbb R^{m-n-1}$ of $U$
form a {\itshape dual configuration} or a {\itshape Gale transform}. The
{\itshape Gale diagram} of $V$ is the set of points $\{\ib{x}_i\}$ on
$S^{m-n-2}\cup\{\mathbf 0\}$:
$$
\ib{x}_i=\begin{cases} \mathbf 0,& \mbox{ if
}\overline{\ib{v}}_i=\mathbf 0,\\
\frac{\overline{\ib{v}}_i}{|\overline{\ib{v}}_i|},&\mbox{ if }
\overline{\ib{v}}_i\ne \mathbf 0.
\end{cases}
$$
\end{defin}

It is known (See, for example, \cite{GB}) that if $\ib{v}_1,\dots,\ib{v}_m$
are vertices of a polytope $P$, then $\ib{v}_{i_1},\dots,\ib{v}_{i_k}$ form a
face of $P$ if and only if
$0\in\relint\{\overline{\ib{v}}_1,\dots,\widehat{\overline{\ib{v}}_{i_1}},\dots,\widehat{\overline{\ib{v}}_{i_k}},\dots,\overline{\ib{v}}_m\}$.
The same is true for Gale diagrams.

Let $P\in\mathbb R^n$ be a simple polytope with $m$ facets and $0\in
P$, $P^*$ -- it's dual polytope and
$\overline{\ib{v}_1},\dots,\overline{\ib{v}_m}$ -- a configuration
dual to the set $\{v_1,\dots,v_m\}$ of vertices of $P^*$. Then if we
take each point $\overline{\ib{v}_j}\in\mathbb R^{m-n-1}$ of the
dual configuration $k_j$ times and come back to $\mathbb R^n$, then
we obtain a simplicial polytope $(\widehat{P})^*$, and a dual
polytope $\widehat{P}$ is combinatorially equivalent to
$P_{k_1,\dots,\,k_m}$.

This observation is a bridge to simple polytopes with $n+3$ facets.

\subsection{Simple polytopes with $n+3$ facets.}

\textbf{I.} It was invented by M.Perles (see the classical book by
B.Grunbaum \cite{GB}) that any $n$-polytope with $n+3$ vertices  is
combinatorially equivalent to the polytope
$P_{a_0,a_1,\dots,\,a_{2k}}$ that has the Gale diagram of the
following type. Let $M_{2k}$ be a regular $2k$-gon inscribed in the
unit circle with the center $\mathbf 0$ and let $v_1,\dots,v_{2k}$
be the set of it's vertices in the order they lie on the circle.
Then the Gale diagram of $P_{a_0,\dots,\, a_{2k}}$ consist of $a_0$
times $\mathbf 0$, $a_i$ times $v_i$ for $i=1,\dots, 2k$,
$a_0+a_1+\dots+a_{2k}=n+3,\, a_i\geqslant 0$,  with the property
that there are no two consequent vertices with $a_i=0$.

In the case of simplicial polytopes $a_0$ should be equal to $0$ and
there should be no diameters of the circle with both endpoints in
the configuration. So we see that each simplicial polytope $P^n$
with $n+3$ vertices can be described in terms of regular
$(2k-1)$-gon $M_{2k-1}$ and a surjective map from the set of
vertices $V_1,\dots,V_{n+3}$ of $P^n$ to $\{v_1,\dots,v_{2k-1}\}$
with the property that the set of vertices
$\{V_1,\dots,\widehat{V_{i_1}},\dots,\widehat{V_{i_2}},\dots,\widehat{V_{i_3}},\dots,V_{n+3}\}$
form an $(n-1)$-face if and only if $\mathbf 0\in\relint
\{V_{i_1},V_{i_2},V_{i_3}\}$ (here we denote $V_{i_k}$ and it's
image by the same letter). This result is equally valid for simple
polytopes if we substitute ``facets'' for ``vertices''.

Now let us consider the ``simplest'' polytopes with $n+3$ facets,
that is,  polytopes $P_k$ corresponding to regular $(2k-1)$-polygons
with multiplicity of each vertex $1$. $5$-gon corresponds to usual
$5$-gon. It's face ring has the form $\mathbb Z[P_5]=\mathbb
Z[v_1,v_2,v_3,v_4,v_5]/(v_1v_2,v_2v_3,v_3v_4,v_4v_5,v_5v_1)$

Let us find a face ring for any $k$. The set of facets corresponding
to $\{v_{i_1},\dots,v_{i_r}\}$ do not intersect if and only of it's
complement $\{v_j: j\notin\{i_1,\dots, i_r\}\}$ doesn't contain a
triangle with $\mathbf 0$ in the relative interior. We claim that it
can happen if and only if the set
$\{v_{i_1},v_{i_2},\dots,v_{i_r}\}$ contains $k-1$ successive
vertices $v_i,v_{i+1},\dots, v_{i+k-2}$, where here and below we
consider the indices modulo $(2k-1)$.

To prove this fact let us consider the longest sequence of vertices
$v_i,v_{i+1},\dots,v_{i+l-1}$ in $\{v_{i_1},\dots,v_{i_r}\}$. Assume
that $l<k-1$. Consider two vertices $v_{i-1}$ and $v_{i+l}$. Then
for any $j:(i-1)+k\leqslant j\leqslant i+l+(k-1)$ we have $\mathbf
0\in\relint \conv\{v_{i-1},v_{i+l},v_j\}$, so
$j\in\{i_1,\dots,i_r\}$. Thus we have a segment of length $l+1$ and
a contradiction.

So we have:
\begin{prop}
$$
\mathbb Z[P_k]=\mathbb Z[v_1,\dots,v_{2k-1}]/(v_iv_{i+1}\dots
v_{i+k-2}),
$$
where the indices are taken modulo $(2k-1)$.
\end{prop}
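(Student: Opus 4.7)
The plan is to identify the minimal non-faces of the nerve of $P_k$ directly and then read off the Stanley--Reisner ideal. By the Gale-dual description recalled in the excerpt, a collection $\{F_{i_1},\dots,F_{i_r}\}$ of facets of $P_k$ has nonempty intersection iff $\mathbf 0 \in \relint \conv\{v_j : j \notin \{i_1,\dots,i_r\}\}$; writing $J' = \{1,\dots,2k-1\}\setminus\{i_1,\dots,i_r\}$, the collection fails to intersect precisely when $\mathbf 0$ fails to lie in the interior of the planar convex hull of the Gale points indexed by $J'$. So the task reduces to a convex-geometric analysis on the regular $(2k-1)$-gon.

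The crux is the following lemma: $\mathbf 0 \notin \relint \conv\{v_j : j \in J'\}$ iff the complementary set $\{i_1,\dots,i_r\}$ contains some block $\{i, i+1,\dots, i+k-2\}$ of $k-1$ cyclically consecutive indices. For the ``if'' direction I would observe that $J'$ is then contained in the complementary block of $k$ consecutive vertices, which subtend a central angle of $\frac{2(k-1)\pi}{2k-1} < \pi$ and therefore lie strictly in an open half-plane through the origin; hence $\mathbf 0$ cannot lie in the interior of $\conv\{v_j : j \in J'\}$. For the ``only if'' direction I adopt the contrapositive argument already sketched in the paragraph preceding Proposition~3: let $v_i,\dots,v_{i+l-1}$ be a longest run of consecutive indices inside $\{i_1,\dots,i_r\}$ with $l \leq k-2$, so that $v_{i-1}, v_{i+l} \in J'$; an elementary angular check, amounting to placing $v_j$ in the open cone opposite to that spanned by $v_{i-1}$ and $v_{i+l}$, shows that the triangle $\{v_{i-1}, v_{i+l}, v_j\}$ contains $\mathbf 0$ in its interior precisely when $j$ lies in the opposite arc $(i-1)+k \leq j \leq (i+l)+k-1$, a cyclic block of $l+1$ indices. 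Were all of them in $\{i_1,\dots,i_r\}$, they would extend the original run to a block of length $l+1$, contradicting maximality; hence at least one such $v_j$ lies in $J'$, and the triangle witnesses $\mathbf 0 \in \relint \conv\{v_j : j \in J'\}$.

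With the lemma in hand, Proposition~3 is immediate: the minimal non-faces of the nerve of $P_k$ are precisely the $2k-1$ cyclically consecutive blocks $\{v_i, v_{i+1}, \dots, v_{i+k-2}\}$ (indices mod $2k-1$), so the Stanley--Reisner ideal is generated by the $2k-1$ monomials $v_i v_{i+1} \cdots v_{i+k-2}$, matching the stated presentation of $\mathbb Z[P_k]$. The main technical obstacle is the angular verification inside the ``only if'' direction of the lemma: confirming the exact range $(i-1)+k \leq j \leq (i+l)+k-1$ and checking that $\mathbf 0$ lies strictly in the interior of the triangle (rather than on its boundary) requires careful bookkeeping modulo $2k-1$, but is otherwise routine planar geometry.
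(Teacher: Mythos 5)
Your proof is correct and follows essentially the same route as the paper: translate the face condition through the Gale criterion, then characterize the minimal non-faces by taking a longest cyclic run $v_i,\dots,v_{i+l-1}$ inside the non-intersecting collection and using the triangle $\{v_{i-1},v_{i+l},v_j\}$ with $j$ in the range $(i-1)+k\leqslant j\leqslant (i+l)+(k-1)$ to force a longer run and derive a contradiction. The only differences are cosmetic: you make the ``if'' direction (the $k$ complementary vertices lying in an open half-plane) and the angular bookkeeping explicit, whereas the paper leaves them implicit; and your phrase ``extend the original run'' is slightly loose since the block $[i+k-1,\,i+l+k-1]$ is disjoint from $[i,\,i+l-1]$, though the maximality contradiction you reach is the same one.
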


Observing that the polytope corresponding to regular $(2k-1)$-gon
with multiplicities $a_1,\dots, a_{2k-1}$ is combinatorially
equivalent to $(P_k)_{a_1,\dots,a_{2k-1}}$ we can formulate a
corollary:

\begin{prop}
The Stanley-Reisner ring of a simple polytope with $n+3$ facets
corresponding to the $(2k-1)$-gon with multiplicities
$a_1,\dots,a_{2k-1}$ is equal to
$$
\mathbb
Z[v^1_1,\dots,v^{a_1}_1,\dots,v^1_{2k-1},\dots,v^{a_{2k-1}}_{2k-1}]/(v^1_i\dots
v^{a_i}_iv^1_{i+1}\dots v^{a_{i+1}}_{i+1}\dots v^1_{i+k-2}\dots
v^{a_{i+k-2}}_{i+k-2}),
$$
where we use a notation $v^l_{i+(2k-1)}=v^l_i$, and $1\leqslant
i\leqslant 2k-1$.
\end{prop}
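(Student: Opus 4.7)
The plan is that this Proposition is a direct combination of Proposition 1, Proposition 3, and the Gale diagram observation at the end of Subsection III, so I would assemble it in two short steps.

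First, I would identify the polytope in the statement with $(P_k)_{a_1,\dots,a_{2k-1}}$. The polytope described by the $(2k-1)$-gon with multiplicities $a_1,\dots,a_{2k-1}$ on its vertices is, by definition, the simple polytope whose Gale diagram is obtained from the Gale diagram of $P_k$ by repeating each vertex $v_i$ exactly $a_i$ times. By the observation at the end of Subsection III, repeating each point of the dual configuration $\overline{\ib{v}_j}$ a total of $k_j$ times and returning to $\mathbb R^n$ produces exactly the polytope $P_{k_1,\dots,\,k_m}$. Applying this to $P_k$ with $k_j=a_j$ identifies our polytope combinatorially with $(P_k)_{a_1,\dots,\,a_{2k-1}}$, as already noted in the paragraph preceding the statement.

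Second, I would compute the Stanley--Reisner ring of $(P_k)_{a_1,\dots,\,a_{2k-1}}$ using Proposition 1. By Proposition 3 we know
$$
\mathbb Z[P_k]=\mathbb Z[v_1,\dots,v_{2k-1}]/I,\qquad I=(v_i v_{i+1}\cdots v_{i+k-2}:\ 1\leqslant i\leqslant 2k-1),
$$
with indices modulo $2k-1$. Proposition 1 describes $\mathbb Z[P_{k_1,\dots,k_m}]$ by replacing each variable $v_j$ with a block $v^1_j,\dots,v^{a_j}_j$ and replacing each generator $v_{i_1}\cdots v_{i_l}$ of $I$ with the full product $v^1_{i_1}\cdots v^{a_{i_1}}_{i_1}\cdots v^1_{i_l}\cdots v^{a_{i_l}}_{i_l}$. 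Applied to the $2k-1$ generators of the Stanley--Reisner ideal of $P_k$, this produces exactly the $2k-1$ relations
$$
v^1_i\cdots v^{a_i}_i\cdot v^1_{i+1}\cdots v^{a_{i+1}}_{i+1}\cdots v^1_{i+k-2}\cdots v^{a_{i+k-2}}_{i+k-2}
$$
listed in the statement, which finishes the proof.

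There is no real obstacle here; the argument is purely a substitution. The only point worth a one-line verification is that the two operations -- (a) repeating Gale vertices and then reading off the combinatorics, and (b) forming $P_k$ first and then applying the $(k_1,\dots,k_m)$-construction -- genuinely produce the same combinatorial polytope. This is immediate from the characterization of faces via $\mathbf 0\in\relint$ of a subset of the dual configuration, which depends only on how many copies of each Gale vertex are retained.
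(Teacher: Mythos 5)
Your proposal is correct and follows exactly the route the paper uses: the paper itself proves this proposition implicitly, in the single sentence ``Observing that the polytope corresponding to the regular $(2k-1)$-gon with multiplicities $a_1,\dots,a_{2k-1}$ is combinatorially equivalent to $(P_k)_{a_1,\dots,a_{2k-1}}$ we can formulate a corollary,'' relying precisely on Proposition~1 for the face ring of $P_{k_1,\dots,k_m}$, on Proposition~3 for $\mathbb Z[P_k]$, and on the Gale-transform remark closing Subsection~III for the identification. Your write-up merely makes that sentence explicit, and your closing remark about why the Gale-vertex-repetition and the $(k_1,\dots,k_m)$-construction agree (both face structures are read off from which Gale vertices survive, hence only the multiplicity pattern matters) is the right one-line justification for the identification the paper asserts.
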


At last we can name the polytope $P_k$. In fact it is
combinatorially equivalent to the polytope $(C^{2k-4}(2k-1))^*$ dual
to the {\itshape cyclic polytope} $C^{2k-4}(2k-1)$.

\begin{defin}
A {\itshape Cyclic polytope} $C^n(m)$ is a combinatorial simplicial
polytope that can be defined in the following way:
$C^n(t_1,\dots,t_m)$ is a convex hull of the points
$\{\ib{t}_i=(t_i,t_i^2,\dots,t_i^n)\in\mathbb R^n, t_1<\dots<t_m\}$
on the {\itshape moment curve} $(t,t^2,\dots,t^n)$. It is known
(see, for example, \cite{BP} or \cite{GB}) that each polytope
$C^n(t_1,\dots,t_m)$ is a simplicial polytope and it's face lattice
is defined by the following ``Gale's evenness condition'': the set
$\omega=\{i_1,\dots,i_n\}$ defines the facet
$\conv\{\ib{t}_{i_1},\dots,\ib{t}_{i_n}\}$ if and only if any two
points $\ib{t}_{j_1},\ib{t}_{j_2}: j_1,j_2\notin\omega$ are
separated on the moment curve by an even number of points of
$\omega$.

Thus all the polytopes $C^n(t_1,\dots,t_m)$ are combinatorially
equivalent and their combinatorial type is denoted by $C^n(m)$.
\end{defin}

\begin{prop}
$P_k$ is combinatorially equivalent to $(C^{2k-4}(2k-1))^*$.
\end{prop}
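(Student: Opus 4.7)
The plan is to exhibit a bijection between the facets of $P_k$ and the vertices of the simplicial polytope $C^{2k-4}(2k-1)$ that identifies the ``vertex-defining'' $n$-subsets on both sides. Since both $P_k$ and $(C^{2k-4}(2k-1))^*$ are simple polytopes of dimension $n = 2k - 4$ with $n + 3 = 2k - 1$ facets, the combinatorial type is determined by the collection of $n$-subsets of facets that meet in a vertex, equivalently by the $3$-subsets of complementary facets.

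From the Perles-type description reviewed above, the vertices of $P_k$ correspond to triples of gon-vertices whose triangle contains the center of the regular $(2k-1)$-gon, while Gale's evenness characterizes the facets of $C^{2k-4}(2k-1)$: a $3$-element complement $\{j_1 < j_2 < j_3\} \subset \{1, \dots, 2k-1\}$ must satisfy $j_2 - j_1$ and $j_3 - j_2$ both odd. I would identify the gon-vertex $v_i$ with the linear index $\phi(v_i) := (2i - 1) \bmod (2k-1)$; this is a bijection since $\gcd(2, 2k-1) = 1$, with inverse $\phi^{-1}(j) \equiv k(j+1) \pmod{2k-1}$ (using $2k \equiv 1$).

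To verify that $\phi$ matches center-containing triples with Gale-valid triples, I would translate linear gaps into cyclic gon-distances: for $j < j'$ in $\{1, \dots, 2k-1\}$, the directed gon-distance from $\phi^{-1}(j)$ to $\phi^{-1}(j')$ is $k(j' - j) \bmod (2k-1)$. A short computation gives: an even gap $\Delta = 2l$ produces gon-distance $l \leqslant k - 1$, while an odd gap $\Delta = 2l + 1$ produces gon-distance $k + l \geqslant k$. For a triple $\{j_1 < j_2 < j_3\}$ with linear gaps $\Delta_1, \Delta_2, \Delta_3$ summing to the odd number $2k - 1$, parity forces either exactly one or all three $\Delta_i$ to be odd. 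Gale's evenness singles out the ``all three odd'' case, and precisely in that case the three directed gon-distances sum to $2(2k - 1)$, so the actual (undirected) gon-arcs equal $(2k - 1) - (k + l_i) = k - 1 - l_i \leqslant k - 1$, i.e., the triangle contains the center. In the ``one odd'' case the directed distances sum to $2k - 1$ (so they already give the actual arcs) and one of them is $\geqslant k$, which fails center-containment.

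The main obstacle is this orientation bookkeeping: the quantity $k(j'-j) \bmod (2k-1)$ is a directed distance that coincides with the undirected arc size only in the Gale-violating case, while in the Gale-valid case one must pass to the complementary distance $(2k-1) - k(j'-j)$. Once the parity analysis is complete, $\phi$ induces an isomorphism of the face lattices of $P_k$ and $(C^{2k-4}(2k-1))^*$, completing the proof.
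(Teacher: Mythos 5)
Your argument is correct, and it rests on the same explicit bijection the paper uses: your $\phi(v_i)=(2i-1)\bmod(2k-1)$ is, up to a rotation of indices, the inverse of the paper's $\varphi(F_i)=V_{ki}$, since $2$ and $k$ are multiplicative inverses modulo $2k-1$. The difference is in how the face-lattice isomorphism is checked. The paper works with the $n$-element non-face description of $P_k$ (a set of $n$ facets misses a vertex exactly when it contains $k-1$ consecutive gon-vertices) and chases this through $\varphi$ against Gale's evenness, arguing in each direction separately. You instead compare the complementary $3$-subsets directly, turning Gale's condition into ``both inner gaps odd,'' translating gaps $\Delta$ into directed steps $k\Delta\bmod(2k-1)$, and observing that these steps sum to either $2k-1$ or $2(2k-1)$; that dichotomy is precisely the Gale dichotomy, and in the second case passing to complementary arcs gives the center-containment criterion. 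Your handling of the orientation subtlety (directed versus undirected arcs) is exactly the point that needs care, and you resolve it correctly. One thing the paper offers that you omit is a much shorter indirect proof appended at the end: a counting argument shows $P_k$ is the unique neighbourly simple $(2k-4)$-polytope with $2k-1$ facets, so it must coincide with $(C^{2k-4}(2k-1))^*$; your route, like the paper's primary one, has the advantage of producing the combinatorial equivalence explicitly.
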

\begin{proof}
Denote by $F_i$ the facet of $(C^{2k-4}(2k-1))^*$ corresponding to
the vertex $\ib{t}_i$ of $C^{2k-4}(2k-1)$.

At first let us note that the cyclic polytope $C^n(m)$ of even
dimension has a rotational symmetry, that is we can imagine that the
points $\ib{t}_1,\dots,\ib{t}_m$ are situated on the circle (so the
point $\ib{t}_1$ is next to $\ib{t}_m$) and the set
$\omega=\{i_1,\dots,i_n\}$ defines the facet
$\conv\{\ib{t}_{i_1},\dots,\ib{t}_{i_n}\}$ if and only if any to
points $\ib{t}_{j_1},\ib{t}_{j_2}: j_1,j_2\notin\omega$ are
separated on the circle by an even number of points of $\omega$.

We use the term {\itshape segment} to denote the sequence of
adjacent points on the circle and the {\itshape length} of a segment
is just the number of points in it.

Indeed, any set $\omega$ with the above property defines a facet.
Problems occur when we move in the opposite direction: any vertex of
the cyclic polytope is defined by such a set $\omega$. Let us
consider the set $\omega$ that defines a vertex. Then each pair
$\ib{t}_{j_1},\ib{t}_{j_2}: j_1,j_2\notin \omega$ are separated on
the segment $[1,\dots,m]$ by even number of points of $\omega$. But
if we take the other arc of the circle, then we see that all the
points of $\omega$ lie on segments $[i,i+1,\dots,i+l]$ of even
length except for the the points $\{1,\dots,i\}$ and
$\{m-j+1,\dots,m\}\in\{i_1,\dots,i_n\}$ at the ends of the segment
$[1,\dots,m]$. If $n$ is even, then $i+j$ should be even and we have
the proof, and if $n$ is odd, then our statement is not true.

Let us define the correspondence $\varphi:F_i\to V_{ki}$. The
greatest common divisor of $k$ and $2k-1$ is equal to $1$, so our
correspondence $\varphi$ is a bijection.

On the other hand, let $n=2k-4$ and
$\Omega=\{V_1,\dots,\widehat{V_{i_1}},\dots,\widehat{V_{i_2}},\dots,
\widehat{V_{i_3}},\dots, V_{n+3}\}$ be a set of facets that doesn't
intersect in a vertex of $P_k$. Then this set contains some segment
$V_i,V_{i+1},\dots, V_{i+k-2}$ of facets of $P_k$. If $V_i=V_{kj}$,
then $V_{i+1}=V_{kj+(2k-1)+1}=V_{k(j+2)}$. The facets
$V_{i_1},V_{i_2},V_{i_3}$ lie in the complement
$\{V_{i+k-1},V_{i+k},\dots,V_{i-1}\}$, which consists of $k$ facets
with the property that if $V_j$ corresponds to $F_l$, then $V_{j+1}$
-- to $F_{l+2}$. So for two adjacent facets $V_{i_s}$ and $V_{i_t}$
their preimages $\varphi^{-1}(V_{i_s})$ and $\varphi^{-1}(V_{i_t})$
are separated by an odd number of facets of $\varphi^{-1}(\Omega)$,
and $\varphi^{-1}(\Omega)$ is not a vertex of $(C^{2k-4}(2k-1))^*$.

Now let us consider the set
$\omega=\{1,\dots,2k-1\}\setminus\{i_1,i_2,i_3\}$:
$F_1\cap\dots\widehat{F_{i_1}}\dots\widehat{F_{i_2}}\dots\widehat{F_{i_3}}\dots\cap
F_{2k-1}$ is not a vertex of $(C^n(n+3))^*$. According to the
``Gale's evenness condition'' at least two of the facets
$\{F_{i_1},F_{i_2},F_{i_3}\}$ should be separated by an odd number
of facets. Let it be $F_{i_1}$ and $F_{i_2}$. It means that one of
the arcs between $F_{i_1}$ and $F_{i_2}$ contains an odd number of
points and the other is divided by $F_{i_3}$ into two arcs : one
consisting of an odd number of points and the other -- of an even.
Without loss of generality let the segments  $(F_{i_1},F_{i_2})$ and
$(F_{i_2}, F_{i_3})$ contain an odd number of points. Then
$\varphi(F_{i_1}), \varphi(F_{i_2})$ and $\varphi(F_{i_3})$ lie on
the segment of length at most $k$ and thus the set of facets
$\Omega=\{\varphi(V_{j}):j\in\omega\}$ contains the segment of
length $k-1$ and doesn't intersect in a vertex of $P_k$.

We have built an explicit combinatorial equivalence between
$(C^{2k-4}(2k-1))^*$ and $P_k$, but the fact that they are
equivalent can be seen much more easy: $P_k$ is the only neighbourly
polytope among the polytopes of dimension $2k-4$ with $2k-1$ facets.
It is easy to see: if the polytope $P$ corresponds to the
$(2l-1)$-gon with $l<k$, then there exist $k-2$ facets that do not
intersect. If it is not true, then any segment $[i,\dots,i+l-2]$ of
length $l-1$ contains $s_i\geqslant k-1$ facets of $P$. Let us
consider the sum $\Sigma=\sum\limits_{i=1}^{2l-1} s_i$ over all such
segments. Each facet of $P$ should lie in $l-1$ segments, so
$\Sigma=(2k-1)(l-1)=\sum\limits_{i=1}^{2l-1}s_i\geqslant(2l-1)(k-1)$.
It is equivalent to the inequality $l\geqslant k$.
\end{proof}

We can formulate the corollary:
\begin{cor}
Any simple polytope $P^n$ with $n+3$ facets is combinatorially
equivalent to \linebreak
$P_{a_1,\dots,\,a_{2k-1}}=(C^{2k-4}(2k-1))^*_{a_1,\dots,\,a_{2k-1}}$
for some $k\geqslant 2$ and $\{a_i\geqslant
1:\sum\limits_{i=1}^{2k-1}a_i=n+3\}$.
\end{cor}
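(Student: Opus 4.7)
The plan is to assemble the corollary from three ingredients already established in this subsection: the Perles classification of simplicial polytopes with $n+3$ vertices (cited from Gr\"unbaum), the Gale-dual observation that the construction $P \mapsto P_{k_1,\ldots,k_m}$ corresponds to taking vertex multiplicities in the Gale diagram, and the identification $P_k \cong (C^{2k-4}(2k-1))^*$ proved in the previous proposition.

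First I would dualize. The dual $P^*$ is a simplicial $n$-polytope with $n+3$ vertices, so by Perles' classification (in the odd-polygon form recalled at the start of the subsection) it admits a combinatorial description via a surjective map from its vertex set to the vertex set $\{v_1,\ldots,v_{2k-1}\}$ of a regular $(2k-1)$-gon $M_{2k-1}$, where a set of $n-1$ vertices forms a facet iff the origin lies in the relative interior of the triangle spanned by the remaining three. Translating to the simple side, $P$ is coded by $(M_{2k-1}; a_1, \ldots, a_{2k-1})$ with multiplicities $a_i \geqslant 1$ (forced by surjectivity) and $a_1 + \cdots + a_{2k-1} = n+3$. The value $k = 1$ is ruled out, since the Gale transform of an $n$-polytope with $n+3$ facets lives in $\mathbb R^{m-n-1} = \mathbb R^2$, which requires at least three directions, hence $2k-1 \geqslant 3$.

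Next I would apply the Gale-dual observation preceding Proposition~3: the polytope coded by $(M_{2k-1}; a_1,\ldots,a_{2k-1})$ is combinatorially equivalent to $(P_k)_{a_1,\ldots,a_{2k-1}}$, because repeating each point of the dual configuration $a_j$ times on the Gale side realises the primal operation $P \mapsto P_{k_1,\ldots,k_m}$. Combining this with $P_k \cong (C^{2k-4}(2k-1))^*$ from the preceding proposition yields
$$
P \;\cong\; (P_k)_{a_1,\ldots,a_{2k-1}} \;\cong\; (C^{2k-4}(2k-1))^*_{a_1,\ldots,a_{2k-1}},
$$
which is the claim. The argument carries no substantial obstacle: all three ingredients have already been verified in this subsection, and the only care needed is the bookkeeping around the dualization and the constraint $k \geqslant 2$. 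A quick dimension check, $(2k-4) + \sum_i (a_i - 1) = (2k-4) + (n+3) - (2k-1) = n$, confirms that the construction lands in the correct dimension.
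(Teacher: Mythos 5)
Your proof is correct and follows essentially the same route as the paper: invoke the Perles/Gale classification in the odd-polygon form, identify the multiplicity construction with $P\mapsto P_{k_1,\dots,k_m}$ via Gale duality, and plug in the preceding proposition $P_k\cong (C^{2k-4}(2k-1))^*$. The extra remarks on why $k\geqslant 2$ and the dimension check are sensible sanity checks but add nothing beyond what the paper already establishes.
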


\textbf{II.} In fact, there is another approach to simple polytopes
with $n+3$ facets (compare with \cite{GB}). It is known that any
simple polytope with $n+2$ facets is projectively equivalent to the
product of two simplices $\Delta^i\times\Delta^j, j=n-i$. We can
realize it as
$$
\{(\ib{x},\ib{y})\in\mathbb
R^{i+1}(x_0,\dots,x_i)\times\mathbb
R^{j+1}(y_0,\dots,y_j):x_k\geqslant 0,y_l\geqslant
0,x_0+\dots+x_i=1, y_0+\dots+y_j=1\}.
$$

Then any simple polytope $P$ with $n+3$ facets is projectively
equivalent to the section of some $\Delta^i\times\Delta^j$ by a
halfspace
$$
a_0x_0+\dots+a_ix_i+b_0y_0+\dots+b_jy_j\leqslant c,\,
a_0<a_1<\dots<a_i,\, b_0<b_1<\dots<b_j,
$$
and since the polytope is simple up to an isotopy of a polytope we
can take $c=\pm 1$. In the case of $c=1$ the set of vertices of $P$
consists of three types:
\begin{itemize}
\item vertices $(\ib{e}_k,\ib{f}_l)$, where $\ib{e}_k$ is the $k$-th basis
vector in $\mathbb R^{i+1}$, $\ib{f}_l$ is the $l$-th basis vector
in $\mathbb R^{j+1}$, and $a_k+b_l<1$;
\item intersections of the additional hyperplane with the edges
$\ib{e}_k\times\conv\{\ib{f}_p,\ib{f}_q\},\, p<q$. In this case
$a_k+t_1b_p+t_2b_q=1, t_1,t_2>0, t_1+t_2=1$. This condition holds if
and only if $a_k+b_p<1<a_k+b_q$.
\item intersections with the edges $\conv\{\ib{a}_p,\ib{a}_q\}\times\ib{f}_l,\, p<q$. In this case
$a_p+b_l<1<a_q+b_l$.
\end{itemize}

Thus we can represent the polytope $P$ in terms of a $(i+1)\times
(j+1)$ table (compare with ``star diagram'' in \cite{GB}) on the
plane with horizontal lines $y=b_l$, vertical lines $x=a_k$ and a
line $l: x+y=1$.

Then the facets of the polytope $P$ correspond to the lines and the
vertices -- to
\begin{itemize}
\item the nodes $(a_k,b_l)$ beneath the line $l$. In such vertices
intersect all facets but those corresponding to $a_k$, $b_l$ and the
line $l$.
\item the pairs of nodes $\{(a_k,b_p),(a_k, b_q)\}$ in
different halfspaces with respect to $l$. In such vertices intersect
all facets but those corresponding to $a_k,b_p,b_q$.
\item the pairs of nodes $\{(a_p,b_l),(a_q,b_l)\}$ in different
halfspaces with respect to $l$. In such vertices intersect all
facets but those corresponding to $a_p,a_q,b_l$.
\end{itemize}

Since the case of $c=-1$ just changes the notions ``above'' and
``beneath'' the line $l$ we see that the case $c=1$ includes all
combinatorial types.

Let us mention that both tables and polygons include also the case
of simple $n$-polytopes with $n+1$ and $n+2$ facets.

\textbf{III.} Now let us describe the correspondence between tables
and polygons. In fact vertices of the $(2k-1)$-gon correspond to the
collections of successive horizontal or vertical lines that are
situated similarly in the table. More precise let us forget about
the distances and think that all the vertical (respectively
horizontal) lines are equidistant. Then the line $l$ transforms to
the broken line. We can consider $l$ as a graph of a piecewise
linear map. A combinatorial polytope is defined by the segments on
vertical and horizontal lines that intersect a broken line. Let us
put to the same class all the facets corresponding to the lines
above the broken line $l$ and $l$ itself. Each horizontal line is
divided by vertical lines into $i$ segments. Then let us put to the
same class all horizontal lines that are intersected by $l$ in the
same segment and let us gather vertical lines into classes according
to the same rule. Then obtained classes are exactly the vertices of
the $(2k-1)$-gon and they have the same order on the circle as in
the table.

\subsection{Flips and bistellar transformations}

There are very important operations called {\itshape flips} defined
on the set of simple polytopes. Let
$$P^n=\{\ib{x}\in\mathbb R^n: A_p\ib{x}+\ib{b}_p\geqslant\mathbf 0\}$$
be a simple polytope and let us take the hyperplane
$\ib{a}_s\ib{x}+b_s=0$, that defines a facet $F_s$. Then we can move
this hyperplane inside the polytope (for example, we can decrease
$b_i$, or change both $\ib{a}_i$ and $b_i$). Then until some moment
of time the combinatorial type of the polytope $P$ doesn't change,
but then we cut off one new vertex $v$, that was connected by an
edge with the facet $F_s=\{\ib{x}\in P^n: \ib{a}_s\ib{x}+b_s=0\}$
and obtain new combinatorial polytope $\hat P$.

During the movement the following event occurs: let the vertex $v$
be connected with exactly $i-1$ vertices on the facet $F_s$. Then
after the flip the vertex $v$ and all these $i-1$ vertices vanish
and new $n-i+1$ vertices corresponding to the rest $n-i+1$ edges in
$v$ appear.

Combinatorially this operation can be described quite easy: we have
$n+1$ facets\\ $F_{j_1}=F_s,F_{j_2},\dots, F_{j_n},F_{j_{n+1}}$ such
that for any $t\in I=\{1,\dots,i\}$ the intersection
$F_{j_1}\cap\dots\widehat{F_{j_t}}\dots\cap F_{j_{n+1}}$ is a
vertex, for any $t\in J=\{i+1,\dots,n+1\} :
F_{j_1}\cap\dots\widehat{ F_{j_1}}\dots\cap
F_{j_{n+1}}=\varnothing$, and $F_{j_1}\cap\dots\cap
F_{j_i}=\varnothing$. (Such a flip is called an $i$-{\itshape
flip}.) Then the flip exchanges two sets $I$ and $J$. In fact, if we
consider the simplicial complex $\partial(P^*)$ then this operation
corresponds to a bistellar $(i-1)$-transformation.

It is easy to calculate that the $H$-polynomial
$H(P)=h_0\alpha^n+h_1\alpha^{n-1}t+\dots+h_{n-1}\alpha
t^{n-1}+h_nt^n$ changes under an $i$-flip in the following way:
$$
H(\hat P)=H(P)+\frac{\alpha^{n+1-i}t^i-\alpha^it^{n+1-i}}{\alpha-t}.
$$

It will be convenient to use a usual $h$-polynomial

$$
h(t)=H(t,1)=H(1,t)=h_0t^n+h_1t^{n-1}+\dots+h_{n-1}t+h_n.
$$

An $h$-polynomial changes under an $i$-flip in the following way:
$$
h(\hat P)=h(P)=\frac{t^{n+1-i}-t^i}{t-1}.
$$

Let us consider what happens when we move lines in the table of
$n+3$ polytope.

Imagine that we move the line $l$ in such a way that exactly one
node $(a_p,b_q)$ corresponding to the vertex $v$ becomes upper.

Let us calculate the type of this flip. All the facets but those
corresponding to $a_p$ and $b_q$ take part in the flip. The moving
hyperplane corresponds to $l$, $a_p$, or $b_q$. The facets
complementary to $a_p,b_q,l$ intersect in $v$ and the complement to
the facets $a_p,a_s,b_q$ or $a_p,b_q,b_t$ intersect in a vertex if
and only if $s>p$ or $t>q$ respectively. Thus the flip has type
$(i-p)+(j-q)+1=i+j+1-(p+q)=n+1-(p+q)$. Using this operation we can
move the line $l$ to the left bottom position when just the vertex
$(a_0,b_0)$ is beneath it. In this case our polytope is a simplex
and $h(P)=\frac{t^{n+1}-1}{t-1}=\frac{t^{n+1-(0+0)}-t^{0+0}}{t-1}$.

In is easy to see that in terms of a polygon the corresponding flip
can be described in the following way:

\begin{prop}
The flip mentioned above transforms the $(2k-1)$-gon
$(a_1,\dots,a_p,\dots,a_{p+k},\dots,a_{2k-1})$ into the $(2k+1)$-gon
$(a_1,\dots,a_p-1,1,a_{p+1},\dots,a_{p+k-1},1,a_{p+k}-1,\dots,a_{2k-1})$.
If $a_p$ (or $a_{p+k}$) is equal to $1$, then  we should substitute
one vertex with the label $a_{p+k-1}+1$ (or $a_{p+1}+1$) for two
vertices with the labels $a_{p+k-1}$ and $1$ (or $a_{p+1}$ and $1$
respectively). The type of the flip is equal to
$a_{p+1}+\dots+a_{p+k-1}$.
\end{prop}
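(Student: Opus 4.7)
The plan is to track the flip through the table picture of Section 2.3.II and translate the result to the polygon via the equidistant staircase model of Section 2.3.III.

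First I would revisit the flip type computation in the table. To avoid a name clash, write the moving node as $(a_{p_0}, b_{q_0})$. The $n+1$ facets participating in the flip are everything except $\{a_{p_0}, b_{q_0}\}$, and the moving hyperplane is $l$. Using the enumeration of vertices given in Section 2.3.II — nodes beneath $l$, and pairs of nodes in a common row or column separated by $l$ — a direct check shows that omitting $l$, or $a_s$ with $s>p_0$, or $b_t$ with $t>q_0$, produces a vertex \emph{before} the flip, whereas omitting $a_s$ with $s<p_0$ or $b_t$ with $t<q_0$ produces a vertex \emph{after} the flip. The type is therefore $1+(i-p_0)+(j-q_0)=n+1-p_0-q_0$.

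Next I would translate to the polygon. Replace the table by its equidistant version, so that $l$ becomes a monotone broken line with alternating horizontal and vertical segments, the segments being in bijection with polygon classes. The lines $y=b_{q_0}$ and $x=a_{p_0}$ belong to the two classes corresponding to the two staircase segments meeting at the corner just above $(a_{p_0},b_{q_0})$. I would show these two polygon vertices are exactly positions $p$ and $p+k$ of the $(2k-1)$-gon: the $k-1$ polygon vertices strictly between them (on the side of $l$) are in bijection with the staircase segments traversed in going ``over'' that corner, and those segments contain the line $l$ together with every horizontal line $y=b_t$ with $t>q_0$ and every vertical line $x=a_s$ with $s>p_0$ — precisely $1+(j-q_0)+(i-p_0)=n+1-p_0-q_0$ facets. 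This simultaneously identifies the affected pair $(p,p+k)$ and proves the type formula $a_{p+1}+\dots+a_{p+k-1}=n+1-p_0-q_0$.

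After the flip the staircase acquires a new corner at $(a_{p_0},b_{q_0})$; locally it gains two additional segments, isolating the lines $y=b_{q_0}$ and $x=a_{p_0}$ from their former classes. Hence the two old classes lose one facet each (so $a_p,a_{p+k}$ become $a_p-1,a_{p+k}-1$) and two new singleton classes appear between them on the appropriate sides of the polygon, producing exactly the $(2k+1)$-gon of the statement. The degenerate cases $a_p=1$ or $a_{p+k}=1$ correspond to a class vanishing under the subtraction; the newly inserted singleton then coalesces with its other neighbor, giving the advertised substitution. The main obstacle is the bookkeeping involved in identifying the affected polygon positions with $p$ and $p+k$; once this identification is read off the alternation pattern of the staircase, the remaining verification is a direct enumeration of segments.
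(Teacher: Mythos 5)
The paper itself offers no proof of this proposition (it is stated after the phrase ``it is easy to see that in terms of a polygon the corresponding flip can be described in the following way''), so there is nothing to compare against; the question is whether your sketch is actually correct. The overall strategy — working in the table and then passing to the equidistant/staircase model of Section 2.3.III — is the right one and is exactly what the paper is implicitly invoking. The type computation, the post-flip staircase description, and the treatment of the degenerate cases are all fine in outline.

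The gap is in the key sentence ``the $k-1$ polygon vertices strictly between them (on the side of $l$) are in bijection with the staircase segments traversed in going `over' that corner.'' The polygon's cyclic order is \emph{not} the order in which the broken line traverses its segments. Along the broken line the segments alternate horizontal--vertical; but the paper's correspondence (see the figure and its labels $a_1,\dots,a_{k-1},a_k-1$ and $0,a_{k+1},\dots,a_{2k-1}$) puts all $k-1$ vertical-line classes consecutively (in left-to-right order), then the $l$-class at position $k$, then all $k-1$ horizontal-line classes (top to bottom). So the $k-1$ positions between $p$ and $p+k$ are \emph{not} a contiguous run of staircase segments; they are the vertical-line classes to the right of the corner, then the $l$-class, then the horizontal-line classes above the corner — a set that interleaves with the other arc when read along the staircase. (One can check this concretely: for a $7$-gon $(2,2,1,1,1,1,1)$ arising from a $5\times3$ table, the broken-line order of classes is $\{x_0,x_1\},\{y_2\},\{x_2,x_3\},\{y_1\},\{x_4\},\{y_0\},\{l\}$, whereas the polygon order is $\{x_0,x_1\},\{x_2,x_3\},\{x_4\},\{l\},\{y_2\},\{y_1\},\{y_0\}$ — these differ by a ``multiply by $k$'' permutation mod $2k-1$.) Your conclusion — that the intermediate $k-1$ positions carry exactly $l$ together with the $x_s$, $s>p_0$, and the $y_t$, $t>q_0$, hence $n+1-p_0-q_0$ facets in total — is correct, but it does not follow from a bijection with contiguous staircase segments. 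What is actually needed, and what your sketch never establishes, is the cyclic structure ``verticals left-to-right, then $l$, then horizontals top-to-bottom,'' together with the observation that since the staircase alternates and the concave corner sits between the $i$-th horizontal segment and the $i$-th vertical segment, the number of vertical-line classes to the left of $C(x_{p_0})$ equals the number of horizontal-line classes above $C(y_{q_0})$, forcing the two affected positions to be exactly $k$ apart. Without this, the identification of the pair $(p,p+k)$ — and hence the type formula in polygon terms — is unjustified.
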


During the motion of $l$ we intersect all the nodes beneath $l$
except for $(0,0)$, so we have the statement:
\begin{prop}
Let $P$ be a simple polytope with the table $\{a_p,b_q;l\}$. Then
$$
h(P)=\frac{1}{t-1}\left(\sum\limits_{a_p+b_q<1}t^{n+1-(p+q)}-t^{p+q}\right)
$$
\end{prop}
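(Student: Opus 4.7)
The plan is to derive the formula by telescoping the flip formula along a motion of the line $l$ from its position defining $P$ down to the extreme position where only the corner $(a_0,b_0)$ lies beneath $l$, at which point the polytope degenerates to the simplex $\Delta^n$ with known $h$-polynomial $\frac{t^{n+1}-1}{t-1}$. This is essentially the inductive shelling-style argument that the author has already set up in the preceding two paragraphs, so the proof is almost a bookkeeping exercise.

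First, I would fix a generic motion of $l$ that moves it downward through the table so that exactly one node $(a_p,b_q)$ crosses $l$ (from below to above) at each instant. Along this motion the combinatorial type changes only at these instants, and by the previous proposition each such instant is a single flip whose type is $i = n+1-(p+q)$. The set of nodes crossed during the whole motion is precisely $\{(a_p,b_q)\colon a_p+b_q<1\}\setminus\{(0,0)\}$, since $(0,0)$ is the one node that remains beneath $l$ at the end.

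Next, I would apply the flip formula for the $h$-polynomial, namely $h(\hat P)=h(P)+\frac{t^{n+1-i}-t^i}{t-1}$, to each elementary step. Summing the contributions in the direction from $P$ to $\Delta^n$ gives
\[
h(\Delta^n)=h(P)+\sum_{\substack{a_p+b_q<1\\ (p,q)\neq(0,0)}}\frac{t^{p+q}-t^{n+1-(p+q)}}{t-1},
\]
because a node that was beneath in $P$ becomes above in the simpler polytope, so the flip type is $n+1-(p+q)$ and the two exponents in the numerator swap compared to the desired formula. Solving for $h(P)$ and using $h(\Delta^n)=\frac{t^{n+1}-1}{t-1}=\frac{t^{n+1-(0+0)}-t^{0+0}}{t-1}$ to absorb the missing $(p,q)=(0,0)$ term into the sum yields exactly the claimed identity
\[
h(P)=\frac{1}{t-1}\left(\sum\limits_{a_p+b_q<1}t^{n+1-(p+q)}-t^{p+q}\right).
\]

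The only real obstacle is justifying that such a generic motion exists and that the intermediate polytopes are all simple, so that the flip formula is legitimately applicable at each crossing. This is handled by a small genericity argument: by perturbing the $a_p$ and $b_q$ if necessary we can ensure that no two nodes cross $l$ simultaneously and that the moving hyperplane is always in general position with respect to the remaining facets. Once this is in place, identifying the flip type as $n+1-(p+q)$ follows verbatim from the analysis of the complementary facets in the table given just before Proposition~5, and the telescoping becomes purely formal.
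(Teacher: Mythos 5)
Your proof is correct and follows the paper's own route exactly: move $l$ toward the lower-left corner of the table so that nodes $(a_p,b_q)$ cross one at a time, recognize each crossing as a flip of type $n+1-(p+q)$, telescope the flip formula $h(\hat P)=h(P)+\frac{t^{n+1-i}-t^i}{t-1}$ down to the simplex, and absorb $h(\Delta^n)=\frac{t^{n+1}-1}{t-1}$ as the $(p,q)=(0,0)$ term of the sum. The paper compresses this telescoping into the single sentence preceding the proposition (``During the motion of $l$ we intersect all the nodes beneath $l$ except for $(0,0)$''), so there is no substantive difference; your version merely spells out the bookkeeping and the genericity of the motion.
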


Now let us calculate the $h$-polynomial of $P$ in terms of a
polygon. Let $P=P_{a_1,\dots,\,a_{2k-1}}$. Then we can build the
following table, corresponding to $P$:
\begin{center}
\begin{picture}(150,130)

\put(30,20){\line(1,0){90}} \put(30,35){\line(1,0){90}}
\put(30,50){\line(1,0){90}} \put(30,65){\line(1,0){90}}
\put(30,80){\line(1,0){90}}
\put(30,95){\line(1,0){90}}\put(30,110){\line(1,0){90}}

\put(5,20){$a_{2k-1}$}
\put(5,35){$a_{2k-2}$}\put(15,65){\dots}\put(10,92){$a_{k+1}$}\put(20,110){$0$}

\put(30,20){\line(0,1){90}} \put(45,20){\line(0,1){90}}
\put(60,20){\line(0,1){90}} \put(75,20){\line(0,1){90}}
\put(90,20){\line(0,1){90}}
\put(105,20){\line(0,1){90}}\put(120,20){\line(0,1){90}}

\put(30,10){$a_1$}\put(42,10){$a_2$}\put(72,10){\dots}\put(95,10){$a_{k-1}$}\put(120,10){$a_{k}-1$}

\put(30,103){\line(1,-1){83}}

\end{picture}
\end{center}

Here in the table a line with a label $a_p$ corresponds to $a_p$
parallel lines situated close to each other. Then
$$
h(P)=\frac{1}{t-1}\left(\sum\limits_{p,\,q}
t^{n+1-(p+q)}-t^{p+q}\right),
$$
where the sum is taken over all nodes beneath the line $l$.

One vertical line with a label $a_l$ corresponds to $a_l$ parallel
lines.  This line intersect exactly $k-l$ classes of horizontal
lines with labels $a_{2k-1},\dots,a_{k+l}$. So the sum over all
nodes in $i$-th line in the class $a_l$ is equal to
\begin{multline*}
\frac{1}{t-1}\left(\sum\limits_{j=0}^{a_{k+l}+\dots+a_{2k-1}-1}t^{n+1-(a_1+\dots+a_{l-1}+i-1+j)}-t^{a_1+
\dots+a_{l-1}+i-1+j}\right)=\\
=\frac{1}{t-1}\left(t^{n+1-(a_1+\dots+a_{l-1}+i-1)+(a_{k+l}+\dots+a_{2k-1}-1)}\frac{t^{a_{k+l}+\dots+a_{2k-1}}-1}{t-1}-t^{a_1+\dots+a_{l-1}+i-1}\frac{t^{a_{k+l}+\dots+a_{2k-1}}-1}{t-1}\right)=\\
\frac{1}{(t-1)^2}\left(t^{a_{k+l}+\dots+a_{2k-1}}-1\right)\left(t^{n+1-(a_{k+l}+\dots+a_{2k-1}+a_1+\dots+a_{l-1}-1)-(i-1)}-t^{(a_1+\dots+a_{l-1})+(i-1)}\right).
\end{multline*}

Let us take the sum over $i=1,\dots,a_l$. Using the equalities
$\sum\limits_{i=1}^{2k-1}a_i=n+3$ and
$$
\sum\limits_{i=1}^{a_l}t^{i-1}=\frac{t^{a_l}-1}{t-1},\qquad\sum\limits_{i=1}^{a_l}t^{-(i-1)}=t^{-(a_l-1)}\sum\limits_{i=0}^{a_l-1}t^i=t^{-(a_l-1)}\frac{t^{a_l}-1}{t-1}
$$
we obtain
\begin{multline*}
\frac{1}{(t-1)^3}\left(t^{a_{k+l}+\dots+a_{2k-1}}-1\right)\left(t^{a_l}-1\right)\left(t^{n+1-(a_{k+l}+\dots+a_{2k-1}+a_1+\dots+a_{l-1}-1)}t^{-(a_l-1)}-t^{a_1+\dots+a_{l-1}}\right)=\\
=\frac{1}{(t-1)^3}\left(t^{a_{k+l}+\dots+a_{2k-1}}-1\right)(t^{a_l}-1)\left(t^{a_{l+1}+\dots+a_{l+k-1}}-t^{a_1+\dots+a_{l-1}}\right)=\frac{1}{(t-1)^3}\bigl(t^{a_l+\dots+a_{2k-1}}-\\
-t^{a_{k+l}+\dots+a_l}-t^{a_{l+1}+\dots+a_{2k-1}}-t^{a_l+\dots+a_{l+k-1}}+t^{a_{k+l}+\dots+a_{l-1}}+t^{a_1+\dots+a_l}+t^{a_{l+1}+\dots+a_{l+k-1}}-t^{a_1+\dots+a_{l-1}}\bigr).
\end{multline*}

Let us denote $\varphi_j=a_j+\dots+a_{j+k-2}$,
$\psi_j=a_j+\dots+a_{j+k-1}$, $\eta_j=a_1+\dots+a_j$, and
$\kappa_j=a_{j}+\dots+a_{2k-1}$. Then the previous sum is equal to
$$
\frac{1}{(t-1)^3}\left(t^{\eta_l}-t^{\eta_{l-1}}+t^{\kappa_l}-t^{\kappa_{l+1}}+t^{\varphi_{l+k}}-t^{\psi_{l+k}}+t^{\varphi_{l+1}}-t^{\psi_l}\right)
$$

Now let us take the sum over $l=1,\dots,k-1$.

Since $\eta_0=0$, $\kappa_1=n+3$, $\eta_{k-1}=\varphi_1$,
$\kappa_k=\psi_k$, we have:

\begin{prop}
$$
h(P_{a_1,\dots,\,a_{2k-1}})=\frac{1}{(t-1)^3}\left(t^{n+3}-\sum\limits_{i=1}^{2k-1}t^{\psi_i}+\sum\limits_{i=1}^{2k-1}
t^{\varphi_i}-1\right).
$$
\end{prop}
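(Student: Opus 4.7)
The plan is to start from the per-class contribution the paper has already computed, namely
$$
S_l:=\frac{1}{(t-1)^3}\left(t^{\eta_l}-t^{\eta_{l-1}}+t^{\kappa_l}-t^{\kappa_{l+1}}+t^{\varphi_{l+k}}-t^{\psi_{l+k}}+t^{\varphi_{l+1}}-t^{\psi_l}\right),
$$
which records the sum of $t^{n+1-(p+q)}-t^{p+q}$ over all nodes in the $a_l$-th group of vertical lines (divided by $t-1$), and to sum $S_l$ for $l=1,\dots,k-1$. The observation to exploit is that the $\eta$-part and the $\kappa$-part of $S_l$ are telescoping, while the $\varphi$- and $\psi$-parts assemble into full cyclic sums once the telescope contributes its endpoints.

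First I would sum the telescoping pieces. Using $\eta_0=0$ and the identity $\eta_{k-1}=a_1+\dots+a_{k-1}=\varphi_1$, one gets
$$
\sum_{l=1}^{k-1}(t^{\eta_l}-t^{\eta_{l-1}})=t^{\varphi_1}-1.
$$
Similarly, from $\kappa_1=a_1+\dots+a_{2k-1}=n+3$ and $\kappa_k=a_k+\dots+a_{2k-1}=\psi_k$,
$$
\sum_{l=1}^{k-1}(t^{\kappa_l}-t^{\kappa_{l+1}})=t^{n+3}-t^{\psi_k}.
$$
These contribute the $t^{n+3}-1$ term of the desired formula together with the ``missing'' exponents $\varphi_1$ and $\psi_k$ that the remaining sums do not cover on their own.

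Next I would collect the non-telescoping pieces by shifting indices. The sum $\sum_{l=1}^{k-1}t^{\varphi_{l+1}}$ reindexes to $\sum_{i=2}^{k}t^{\varphi_i}$, and $\sum_{l=1}^{k-1}t^{\varphi_{l+k}}$ reindexes to $\sum_{i=k+1}^{2k-1}t^{\varphi_i}$; adding the leftover $t^{\varphi_1}$ from the $\eta$-telescope fills the range and yields $\sum_{i=1}^{2k-1}t^{\varphi_i}$. The analogous bookkeeping for the $\psi$-terms $-\sum_{l=1}^{k-1}t^{\psi_l}-\sum_{l=1}^{k-1}t^{\psi_{l+k}}$, together with the leftover $-t^{\psi_k}$ from the $\kappa$-telescope, gives $-\sum_{i=1}^{2k-1}t^{\psi_i}$. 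Combining everything and dividing by $(t-1)^3$ produces exactly
$$
h(P_{a_1,\dots,a_{2k-1}})=\frac{1}{(t-1)^3}\left(t^{n+3}-\sum_{i=1}^{2k-1}t^{\psi_i}+\sum_{i=1}^{2k-1}t^{\varphi_i}-1\right).
$$

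There is no serious obstacle; the only thing to watch is the cyclic index bookkeeping, since $\varphi_i$ and $\psi_i$ are defined with indices modulo $2k-1$ while the partial sums $\eta_l$ and $\kappa_l$ are not. The identifications $\eta_{k-1}=\varphi_1$, $\kappa_k=\psi_k$, and $\kappa_1=n+3$ are precisely what bridges these two notations, and once they are used the three separate telescoped/shifted sums combine into the cyclic sums $\sum_{i=1}^{2k-1}t^{\varphi_i}$ and $\sum_{i=1}^{2k-1}t^{\psi_i}$ without leftover terms.
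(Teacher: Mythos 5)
Your proposal is correct and follows exactly the paper's own route: the paper derives the per-class quantity $S_l$ and then states that summing over $l=1,\dots,k-1$ together with the identities $\eta_0=0$, $\kappa_1=n+3$, $\eta_{k-1}=\varphi_1$, $\kappa_k=\psi_k$ gives the formula, leaving the telescoping and reindexing implicit. You have simply filled in that bookkeeping, and your index shifts and endpoint identifications all check out.
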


An algebraic meaning of this formula we will see later.

\subsection{Bigraded Betti numbers}
Basic facts and definitions can be found in \cite{BP}.

It is known (see \cite{BP}) that there is an isomorphism of graded
algebras:
$$
\C^*(\mathcal Z_P,\mathbb Q)\cong\Tor_{\mathbb
Z[v_1,\dots,v_m]}(\mathbb Q[P],\mathbb Q)\cong
\C[\Lambda[u_1,\dots,u_m]\otimes\mathbb Q[P],d],
$$
where $du_i=v_i,\, dv_i=0$, $\bideg u_i=(-1,2)$, $\bideg v_i=(0,2)$.

In the last two algebras a full graduation is defined as a sum of
two graduations and the isomorphism between these algebras is an
isomorphism of bigraded algebras.

Thus the cohomology ring of a moment-angle complex has a canonical
bigraded structure arising from an isomorphism with a $\Tor$-algebra
of the Stanley-Reisner ring. This fact gives rise to a series of
combinatorial invariants of a simple polytope
$$
\beta^{-q,\,2p}=\dim \Tor^{-q,\,2p}_{\mathbb
Q[v_1,\dots,v_m]}(\mathbb Q[P],\mathbb Q).
$$

The usual Betti number $\beta^k(\mathcal Z_P)$ is the sum of the
bigraded Betti numbers
$$
\beta^k(\mathcal Z_P)=\sum\limits_{-q+2p}\beta^{-q,\,2p},\;
k=0,1,\dots,m+n.
$$
There is a canonical way to find $\beta^{-q,\,2p}$. Given a graded
$\mathbb Q[v_1,\dots,v_m]$-module $M$ we can build a {\itshape
minimal resolution} of $M$ in the following way. Let us denote
$A=\mathbb Q[v_1,\dots,v_m]$ and let $A^{+}$ be the ideal generated
by all variables.

Let us take the first nonzero graded component of $M$. Consider it
as a linear space and take the basis $m_1,\dots,m_s$. Then let us
generate the submodule $M_1= A<m_1,\dots,m_s>$ and take the minimal
graded component where $M\ne M_1$. Then we find a basis in the
complement to $M_1$ in this component and continue the process. In
the end we obtain a {\itshape minimal basis} or a {\itshape minimal
system of generators} such that the images of it's elements form a
basis of $M\otimes_{A}\mathbb Q$.

Consider a free module $R^0_{\min}$ with generators corresponding to
the minimal basis of $M$. We have a graded epimorphism
$R^0_{\min}\to M$. Then let us take a minimal basis in the kernel of
this map and build a free module $R^{-1}_{\min}$. On the $i$-th step
we take a minimal basis in the kernel of the map
$d:R^{-i+1}_{\min}\to R^{-i+2}_{\min}$ and build $R^{-i}_{\min}$. In
the end we obtain a free resolution of $M$, which is called a
{\itshape minimal resolution}. Since $\mathbb Q$ is a field the
kernel of the map $d:R^{-i}_{\min}\to R^{-i+1}_{\min}$ is a subset
of $A^{+}\cdot R^{-i}_{\min}$, and it's image is a subset of
$A^{+}\cdot R^{-i+1}_{\min}$, $i=1,2,\dots$. So the induced map
$R^{-i}_{\min}\otimes_{A}\mathbb Q\to
R^{-i+1}_{\min}\otimes_A\mathbb Q$ is trivial. Thus we have:
$$
\Tor^{-i}_A(M,\mathbb Q)\cong R^{-i}_{\min}\otimes_{A}\mathbb
Q,\,\mbox{ and } \beta^{-i,\,2j}=\rk R^{-i,\,2j}_{\min}.
$$

Now let us calculate $\beta^{-i,\,2j}$ for simple polytopes with
$n+3$ facets. We know that
$$
\mathbb Q[P_{a_1,\dots,\,a_{2k-1}}]=\mathbb
Q[v_1^1,\dots,v^{a_1}_1,\dots,v_{2k-1}^1,v_{2k-1}^{a_{2k-1}}]/(v_i^1\dots
v_i^{a_i}\dots v_{i+k-2}^1\dots v_{i+k-2}^{a_{i+k-2}})
$$

\begin{itemize}
\item[{\upshape (i)}] According to the previous construction $R^0_{\min}$ is a free
module with a generator $1$ of degree $0$.
\item[{\upshape (ii)}] Generators of $R^{-1}_{\min}$ correspond to
minimal generators of the ideal $(v_i^1\dots v_i^{a_i}\dots
v_{i+k-2}^1\dots v_{i+k-2}^{a_{i+k-2}})$. Thus we have the
generators $X_i$ of degree $2(a_i+\dots+a_{i+k-2})$, which are mapped to \\
$v_i^1\dots v_i^{a_i}\dots v_{i+k-2}^1\dots v_{i+k-2}^{a_{i+k-2}}$,
$i=1,\dots,2k-1$. We see that $2k-1$ is exactly the rank of
$R^{-1}_{\min}$.
\item[{\upshape (iii)}] Now we should find a minimal basis in the
kernel of the map $d : R^{-1}_{\min}\to R^0_{\min}$. Let us denote
$v_i^1\dots v_i^{a_i}$ by $x_i$ and $x_i\dots x_{i+k-2}$ by
$\mathcal V_i$. We claim that that the set
$$
\{x_{i+k-1}X_i-x_iX_{i+1}:\, i=1,\dots,2k-1\}
$$
is a minimal basis. Indeed, it is evident that all this elements lie
in the kernel, are linearly independent and no one of them lies in
the submodule generated by the others.

We need to show that they generate the kernel of $d$. Let
$F=f_1X_1+\dots+f_{2k-1}X_{2k-1}$ and $dF=0$. Then $f_1x_1\dots
x_{k-1}+\dots+f_{2k-1}x_{2k-1}x_1\dots x_{k-2}=0$. All the summands
but $f_ix_i\dots x_{i+k-2}$ are divided either by $x_{i+k-1}$ or by
$x_{i-1}$ so $f_i=p_ix_{i+k-1}+q_ix_{i-1}$. Then
\begin{multline*}
\sum\limits_{i=1}^{2k-1}
f_iX_i=\sum\limits_{i=1}^{2k-1}(p_ix_{i+k-1}+q_ix_{i-1})
X_i=\\
=\sum\limits_{i=1}^{2k-1}p_i(x_{i+k-1}X_i-x_i X_{i+1})+\sum\limits_{i=1}^{2k-1}(p_i+q_{i+1})x_i X_{i+1}=\\
=\sum\limits_{i=1}^{2k-1}p_i(x_{i+k-1} X_i-x_i
X_{i+1})+\sum\limits_{i=1}^{2k-1}f_i'x_iX_{i+1}.
\end{multline*}

The first summand is a combination of generators. Let us consider
the second summand.

$$
\sum\limits_{i=1}^{2k-1}f_i'x_i\mathcal V_{i+1}=0.
$$

All the summands but $f_i'x_i\mathcal V_i$ are divided either by
$x_{i-1}$ or by $x_{i+k}$. So $f_i'=p_i'x_{i+k}+q_i'x_{i-1}$. Then
\begin{multline*}
\sum\limits_{i=1}^{2k-1}f_i'x_iX_{i+1}=\sum\limits_{i=1}^{2k-1}(p_i'x_{i+k}+q_i'x_{i-1})x_iX_{i+1}=\\
=\sum\limits_{i=1}^{2k-1}p_i'x_i(x_{i+k}X_{i+1}-x_{i+1}X_{i+2})+\sum\limits_{i=1}^{2k-1}(p_i'+q_{i+1}')x_ix_{i+1}X_{i+2}.
\end{multline*}

In general case for $s<k-1$ we have:

If $d\left(\sum\limits_{i=1}^{2k-1}\hat f_ix_ix_{i+1}\dots
x_{i+s-1}X_{i+s}\right)=0$, then $\sum\limits_{i=1}^{2k-1}\hat
f_ix_ix_{i+1}\dots x_{i+s-1}\dots x_{i+s+k-2}=0$ and all the
summands but $\hat f_ix_i\dots x_{i+s+k-2}$ are divided either by
$x_{i-1}$ or by $x_{i+s+k-1}$, so $\hat f_i=\hat p_ix_{i+s+k-1}+\hat
q_ix_{i-1}$. Then
\begin{multline*}
\sum\limits_{i=1}^{2k-1}\hat f_ix_i\dots
x_{i+s-1}X_i=\sum\limits_{i=1}^{2k-1}(\hat p_ix_{i+s+k-1}+\hat
q_ix_{i-1})x_i\dots x_{i+s-1}X_{i+s}=\\
=\sum\limits_{i=1}^{2k-1}\hat p_ix_i\dots
x_{i+s-1}(x_{i+s+k-1}X_{i+s}-x_{i+s}X_{i+s+1})+(\hat p_i+\hat
q_{i+1})x_i\dots x_{i+s}X_{i+s+1}
\end{multline*}

In the case $s=k-1$ we obtain
$$
\sum\limits_{i=1}^{2k-1}\hat f_ix_i\dots x_{i+2k-3}=0
$$
Then
$$
\hat f_i=\hat p_i x_{i-1}\quad\mbox{ and
}\quad\sum\limits_{i=1}^{2k-1}\hat p_i=0
$$
So we have:
\begin{multline*}
\sum\limits_{i=1}^{2k-1}\hat f_ix_i\dots x_{i+k-2}
X_{i+k-1}=\sum\limits_{i=1}^{2k-1}\hat p_ix_{i-1}\dots
x_{i+k-2}X_{i+k-1}=\\
=\hat p_1 x_1\dots x_{k-1}(x_{2k-1}X_k-x_kX_{k+1})+(\hat p_1+\hat
p_2)x_2\dots x_k (x_1X_{k+1}-x_{k+1}X_{k+2})+\dots+\\
+(\hat p_1+\dots+\hat p_{2k-2})x_{2k-2}\dots
x_{k-3}(x_{2k-3}X_{k-2}-x_{k-2}X_{k-1}).
\end{multline*}

So our claim is proved.

Thus $R^{-2}_{\min}$ is generated by $Y_i$, $i=1,\dots,2k-1$,
$\bideg Y_i=(-2,2(a_i+\dots+a_{i+k-1}))$,\\
$dY_i=x_{i+k-1}X_i-x_iX_{i+1}$.

\item[{\upshape (iv)}] Let us calculate $R^{-3}_{\min}$. We should find a kernel of
the map $d:R^{-2}_{\min}\to R^{-1}_{\min}$.\\ Let
$d\left(g_1Y_1+\dots+g_{2k-1}Y_{2k-1}\right)=0$. Then
$$
g_1(x_k X_1-x_1X_2)+\dots+g_{2k-1}(x_{k-1}X_{2k-1}-x_{2k-1}X_1)=0.
$$
$$
\begin{matrix}
g_1x_k-g_{2k-1}x_{2k-1}=0;\\
g_2x_{k+1}-g_1x_1=0;\\
\dots\\
g_{2k-1}x_{k-1}-g_{2k-2}x_{2k-2}=0.\\
\end{matrix}
$$

Then we have:
\begin{multline*}
g_1=\frac{x_{k+1}}{x_1}g_2=\frac{x_{k+1}x_{k+2}}{x_1x_2}g_3=\dots=\frac{x_{k+1}x_{k+2}\dots
x_{2k-1}}{x_1x_2\dots x_{k-1}}g_k=\\
=\frac{x_{k+1}\dots x_{2k-1}}{x_2\dots
x_k}g_{k+1}=\frac{x_{k+2}\dots x_{2k-1}}{x_3\dots
x_k}g_{k+2}=\dots=\frac{x_{2k-1}}{x_k}g_{2k-1}
\end{multline*}

So
$$
\begin{matrix}
g_1=ax_{k+1}x_{k+2}\dots x_{2k-1};\\
g_2=ax_{k+2}x_{k+3}\dots x_1;\\
\dots\\
g_{2k-1}=ax_kx_{k+1}\dots x_{2k-2};\\
\end{matrix}
$$
where $a\in\mathbb
Q[v_1^1,\dots,v_1^{a_1},\dots,v_{2k-1}^1,\dots,v_{2k-1}^{a_{2k-1}}]$.

So the kernel of $d$ is generated by $x_{k+1}\dots
x_{2k-1}Y_1+\dots+x_k\dots x_{2k-2}Y_{2k-1}$.

Thus $R^{-3}_{\min}$ has rank $1$ and one generator $Z$ of degree
$2(a_1+\dots+a_{2k-1})=2(n+3)$. This generator is mapped by $d$ to
$x_{k+1}\dots x_{2k-1}Y_1+\dots+x_k\dots x_{2k-2}Y_{2k-1}$.

\item[{\upshape (v)}] The map $d: R^{-3}_{\min}\to R^{-2}_{\min}$ is an injection so
the minimal resolution of $\mathbb Q[P_{a_1,\dots,\,a_{2k-1}}]$ is
constructed.
\end{itemize}

\begin{prop}
For the polytope $P_{a_1,\dots,\,a_{2k-1}}$ we have:
$$
\begin{matrix}
\beta^{0,\,0}=1;\\
\beta^{-1,\,2j}=|\{l: a_l+\dots+a_{l+k-2}=j\}|;\\
\beta^{-2,\,2j}=|\{l: a_l+\dots+a_{l+k-1}=j\}|;\\
\beta^{-3,\,2(n+3)}=1.\\
\end{matrix}
$$
All other bigraded Betti numbers are equal to $0$.
\end{prop}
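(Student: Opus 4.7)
The proposition is, essentially, already proved by the work in items (i)--(v) immediately preceding it; my proof will simply package this into a clean statement. The plan is to invoke the general fact, recalled in the paragraph introducing minimal resolutions, that
\[
\Tor^{-i,\,2j}_A(\mathbb Q[P],\mathbb Q)\cong R^{-i,\,2j}_{\min}\otimes_A\mathbb Q,
\qquad\text{so}\qquad \beta^{-i,\,2j}=\rk R^{-i,\,2j}_{\min},
\]
and then read off the ranks in each bidegree from the explicit minimal resolution constructed above.

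Concretely, the first step is to recall that the minimal free resolution
\[
0\to R^{-3}_{\min}\xrightarrow{d} R^{-2}_{\min}\xrightarrow{d} R^{-1}_{\min}\xrightarrow{d} R^{0}_{\min}\to \mathbb Q[P_{a_1,\dots,\,a_{2k-1}}]\to 0
\]
was built in (i)--(v): $R^{0}_{\min}$ has one generator $1$ of bidegree $(0,0)$; $R^{-1}_{\min}$ has generators $X_1,\dots,X_{2k-1}$ with $\bideg X_i=(-1,2(a_i+\dots+a_{i+k-2}))$; $R^{-2}_{\min}$ has generators $Y_1,\dots,Y_{2k-1}$ with $\bideg Y_j=(-2,2(a_j+\dots+a_{j+k-1}))$; and $R^{-3}_{\min}$ has one generator $Z$ of bidegree $(-3,2(n+3))$. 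Crucially, since the differentials in the minimal resolution become trivial after tensoring with $\mathbb Q$ over $A$, no cancellations occur and each bigraded Betti number is exactly the number of free generators of the given bidegree.

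The second step is a direct count. In homological degree $0$ one generator lives in internal degree $0$, giving $\beta^{0,0}=1$. In homological degree $-1$ each $X_l$ contributes a unit to $\beta^{-1,\,2j}$ precisely when $a_l+\dots+a_{l+k-2}=j$, yielding $\beta^{-1,\,2j}=|\{l:a_l+\dots+a_{l+k-2}=j\}|$; the analogous count for the $Y_j$'s gives $\beta^{-2,\,2j}=|\{l:a_l+\dots+a_{l+k-1}=j\}|$. In homological degree $-3$ the sole generator $Z$ sits in internal degree $2(n+3)=2(a_1+\dots+a_{2k-1})$, giving $\beta^{-3,\,2(n+3)}=1$. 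Since the resolution vanishes outside homological degrees $0,-1,-2,-3$, all other $\beta^{-i,\,2j}$ are zero.

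There is essentially no obstacle: the only content is in verifying that the resolution constructed in (i)--(v) is indeed minimal, which is already done there (the images of the $d$'s lie in $A^+\cdot R^{\bullet}_{\min}$, so $d\otimes_A\mathbb Q=0$). Once this is granted, the proposition is just a bookkeeping statement reading off $\rk R^{-i,\,2j}_{\min}$.
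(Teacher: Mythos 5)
Your proposal is correct and takes essentially the same approach as the paper: the proposition is read off from the minimal free resolution constructed in items (i)--(v), using that minimality forces $d\otimes_A\mathbb Q=0$ so the bigraded Betti numbers are the ranks of the $R^{-i,\,2j}_{\min}$. Nothing is missing; this is the intended bookkeeping argument.
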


Now we see that the formula for the $h$-polynomial of
$P_{a_1,\dots,\,a_{2k-1}}$ is exactly the well-known formula (see,
for example, \cite{BP}):
$$
\sum\limits_{p=0}^m\left(\sum\limits_{q=0}^m(-1)^q\beta^{-q,\,2p}\right)t^{2p}=(1-t^2)^{m-n}(h_0+h_1t^2+
\dots+h_nt^{2n})
$$

\subsection{Proof of the main theorem}
Now it is time to prove the main theorem:
\begin{proof}
\begin{enumerate}
\item Without loss of generality we can assume that
$P=\{\ib{x}\in\mathbb R^n:\ib{a}_i\ib{x}_i+b_i\geqslant
0,\,i=1,\dots,m\}$ and $Q=\{\ib{x}\in\mathbb
R^n:\ib{a}_i\ib{x}+b_i\geqslant 0,\,i=1,\dots,m-1\}$. Let $H$ be a
subgroup of $\dim H=s(Q)$ defined by the matrix $M_{m-1\times s(Q)}$
corresponding to $Q$. Then let us take the matrix $\hat
M=\begin{pmatrix} M\\ \mathbb O\end{pmatrix}$ with the line $\mathbb
O=(0,\dots,0)$ corresponding to the new facet. We claim that the
matrix $\hat M$ defines a subgroup that acts freely on $\mathcal
Z_P$.

Consider some vertex $\ib{v}=F_{i_1}\cap\dots\cap F_{i_n}$ of $P$.
If $\ib{v}$ is a vertex of $Q$, then the columns of the matrix
$M\setminus\{\ib{m}_{i_1},\dots,\ib{m}_{i_n}\}$ form a part of a
basis of  $\mathbb Z^{m-1-n}$. Then the columns of the matrix $\hat
M\setminus\{\ib{m}_{i_1},\dots,\ib{m}_{i_n}\}$ form a part of a
basis of $\mathbb Z^{m-n}$.

If $\ib{v}$ lies in $F_m$, then there is an edge of $Q$ that
contains $\ib{v}$. This edge connects $\ib{v}$ with the vertex
$\ib{w}$ which is common for $P$ and $Q$. Let
$\ib{w}=F_{i_0}\cap\dots\cap F_{i_{n-1}}$ and
$\ib{v}=F_{i_1}\cap\dots\cap F_{i_{n-1}}\cap F_m$. Since the columns
of the matrix $M\setminus\{\ib{m}_{i_0},\dots,\ib{m}_{i_{n-1}}\}$
form a part of a basis of $\mathbb Z^{m-1-n}$, the columns of $\hat
M\setminus\{\ib{m}_{i_1},\dots,\ib{m}_{i_{n-1}},\ib{m}_m\}=M\setminus\{\ib{m}_{i_1},\dots,\ib{m}_{i_{n-1}}\}$
form a part of a basis of $\mathbb Z^{m-n}$.

\item It is not difficult to see that any simple $n$-polytope with $n+2$ facets is
projectively equivalent to some product $\Delta^i\times
\Delta^{n-i}$. Indeed, let us take some vertex $\ib{v}$ of $P$ and
consider $n$ facets $F_1,\dots F_n$ intersecting in $v$. Then there
are $n$ edges $e_1,\dots, e_n$ intersecting in $\ib{v}$. Up to a
projective transformation we can assume that the rest two facets
intersect the rays which start from $\ib{v}$ in directions of $e_i$.
Then we can apply another projective transformation to obtain that
each ray intersect one of the facets $F_{n+1}$ and $F_{n+2}$ in the
finite part, and the other -- at infinity. Then there are $i$
``finite'' points in $F_{n+1}$ and $j=n-i$ ``finite'' point in
$F_{n+2}$. Thus after projective transformations
$P=\Delta^i\times\Delta^{n-i}$.

For any polytope we can take the diagonal subgroup defined by the
matrix $\begin{pmatrix}1\\1\\ \vdots\\1\end{pmatrix}$. Thus
$s(\Delta^n)=1$. For the product of simplices
$\Delta^i\times\Delta^{n-i}$ we can take the matrix
$\begin{pmatrix}1&0\\ \vdots\\1&0\\ 0&1\\ \vdots\\0&1\end{pmatrix}$,
which has the form $\begin{pmatrix} M_1 & 0\\0 & M_2\end{pmatrix}$.
Thus $s(\Delta^i\times\Delta^{n-i})=2$.

Since any simple $n$-polytope with with $m\geqslant n+2$ facets
after a projective transformation can be transformed to a polytope
with $n+2$ facets by forgetting the inequalities, we obtain:
$s(P)\geqslant 2$ for $m\geqslant n+2$. So $s(P)=1$ if and only if
$P=\Delta^n$.

\item $s(P)\geqslant 3$ if and only if there exist an $m\times 3$
matrix $M$ consisting of $0$ and $1$ such that for any vertex
$\ib{v}=F_{i_1}\cap\dots\cap F_{i_n}$ the submatrix
$M\setminus\{\ib{m}_{i_1},\dots,\ib{m}_{i_n}\}$ has rank $3$ over
$\mathbb Z_2$.

There are $7$ different nonzero vectors in $\mathbb Z_2^3$.

Let us note that if $a\geqslant7(b-1)+1$ then in any box containing
$a$ balls coloured in $7$ colours there are $b$ balls of the same
colour.

Now let us consider the polytope $P=C^{m-k}(m)^*$ dual to the cyclic
polytope. Let us remind that the facets of $P$ have a canonical
ordering and the subset $\sigma=\{i_1,\dots,i_k\}$ defines the
vertex $\ib{v}=F_{j_1}\cap\dots\cap F_{j_{m-k}}$, where
$\omega=\{j_1,\dots,j_{m-k}\}=[m]\setminus\sigma$, if and only if
there is an even number of point of the set $\omega$ between any two
point of $\sigma$. Thus $\sigma$ should consist of $k$ points such
that any two consequent points from $\sigma$ have different
evenness.

Let $m\geqslant 2(7(7([\frac{k}{2}]-1)+1)+1)-1$. Then there are at least
$7(7([\frac{k}{2}]-1)+1)+1$ odd facets. Let $M$ be a $m\times 3$ matrix
consisting of $0$ and $1$. Without loss of generality we can assume that
there are no zero row vectors. Then there are at least
$7([\frac{k}{2}]-1)+2$ equal row vectors corresponding to odd facets.
There are $7([\frac{k}{2}]-1)+1$ even facets between them. Then there are
at least $[\frac{k}{2}]$ equal row vectors corresponding to even facets.
So we can construct a sequence of $k$ row vectors such that any two
adjacent rows have different evenness and there are at most two different
vectors from $\mathbb Z_2^3$ between them.

Thus if
$m\geqslant2(7(7(\frac{k}{2}-1)+1)+1)-1=7(7(k-2)+2)+2-1=49k-83$,
then $s(C^{m-k}(m))^*)=2$. Since $k=m-n$, we have:

{\itshape If $n+2\leqslant m\leqslant\frac{49}{48}n+\frac{83}{48}$,
then}
$$
s(C^n(m)^*)=2.
$$

\item Let $P$ be an $n_1$-polytope with facets $F_1,\dots,F_{m_1}$
and $Q$ be an $n_2$-polytope with facets $G_1,\dots, G_{m_2}$.

If $M_1$ and $M_2$ are matrices defining subgroups $H_1$ and $H_2$
of dimensions $s(P)$ and $s(Q)$ for $P$ and $Q$ respectively, then
it
is not difficult to see, that the matrix $\begin{pmatrix} M_1& 0\\
0 & M_2\end{pmatrix}$ defines a subgroup for $P\times Q$ that acts
freely, if $H_1$ and $H_2$ act freely. Thus $s(P)+s(Q)\leqslant
s(P\times Q)$.

Now let $H$ be a subgroup that is defined by the matrix $M$ and acts
freely on $\mathcal Z_{P\times Q}=\mathcal Z_P\times\mathcal Z_Q$.
Let us take some vertex $\ib{u}\times\ib{v}$, where without loss of
generality we can assume that  $\ib{u}=F_1\cap\dots\cap F_{n_1}$ is
a vertex of $P$ and $\ib{v}=G_1\cap\dots\cap G_{n_2}$ is a vertex of
$Q$.

Let us take the row vector $\ib{m}_{n_1+1}$ corresponding to
$F_{n_1+1}$. By addition and subtraction of columns we can transform
$\ib{m}_{n_1+1}$ to a vector that has at most one nonzero
coordinate, say the first. Then we can take $\ib{m}_{n_1+2}$
corresponding to $F_{n_1+2}$ and transform it to a vector that has
at most one nonzero coordinate among the coordinates $2,\dots,s$,
say the second. Iterating this process in the end we obtain the
matrix
$\hat M=\begin{pmatrix}M_{11}& M_{12}\\
M_{13}& 0\\ A& M_2\end{pmatrix}$ with at most $m_1-n_1$ nonzero
columns in the part $M_{13}$ corresponding to the facets
$F_{n_1+1},\dots,F_{m_1}$ of $P$.  Since for any vertex $\ib{w}$ of
$Q$ the product $\ib{u}\times\ib{w}$ is a vertex of $P\times Q$ we
see that $M_2$ defines a subgroup that act freely on $\mathcal
Z_Q$.\\ So $s(Q)\geqslant s(P\times Q)-(m_1-n_1)$. Similarly
$s(P)+(m_2-n_2)\geqslant s(P\times Q)$.

Thus $s(P\times Q)\leqslant
s(P)+s(Q)+\min\{m_1-n_1-s(P),m_2-s_2-s(Q)\}$.

\item Let $M_1$ be a matrix defining a subgroup $H_1$ of $\dim
H_1= s(P)$ that acts freely on $\mathcal Z_P$ and $M_2$ -- $H_2$ of
$\dim H_2=s(Q)$ that acts freely on $\mathcal Z_Q$. Let $P\sharp Q$
be a connected sum of $P$ and $Q$ along the vertices
$\ib{u}=F_{m_1-n+1}\cap\dots\cap F_{m_1}$ and
$\ib{v}=G_1\cap\dots\cap G_n$. Let $M_1=\begin{pmatrix}M_{11}\\
M_{12}\end{pmatrix}$ and
$M_2=\begin{pmatrix}M_{21}\\M_{22}\end{pmatrix}$, where $M_{12}$
corresponds to $\ib{u}$ and $M_{21}$ -- to $\ib{v}$. Then we claim
that the matrix $M=\begin{pmatrix}M_{11}& 0\\ M_{12}& M_{21}\\0&
M_{22}\end{pmatrix}$ defines a subgroup that acts freely on
$\mathcal Z_{P\sharp Q}$. Indeed, any vertex of $P\sharp Q$ is
either a vertex of $P$ or a vertex of $Q$. Let $\ib{w}$ be a vertex
of $P\sharp Q$ and $P$. Then $\ib{w}=F_{i_1}\cap\dots\cap F_{i_n}$.
Then the columns of the matrix
$M_1\setminus\{\ib{m}_{1,i_1},\dots,\ib{m}_{1,i_n}\}$ form a part of
a basis of $\mathbb Z^{m_1-n}$. But the columns of the matrix
$M_{22}$ form a part of a basis of $\mathbb Z^{m_2-n}$, since
$\ib{v}$ is a vertex of $Q$. Then the columns of the matrix
$M\setminus\{\ib{m}_{i_1},\dots,\ib{m}_{i_n}\}$ form a part of a
basis of $\mathbb Z^{(m_1+m_2-n)-n}=\mathbb Z^{m_1-n}\oplus\mathbb
Z^{m_2-n}$. Thus $s(P)+s(Q)\leqslant s(P\sharp Q)$.

\item This property is evident if we use the second combinatorial
description, since any map $\varphi$ from the set of vertices of
$K_2$ to $m_2-s(K_2)$ such that for any $(n-1)$-simplex of $K_2$ the
corresponding vectors form a part of a basis of $\mathbb
Z^{m_2-s(K_2)}$ induces the map $\varphi f$ with the same property.

\item An inclusion of the face corresponding to $F$ under the polar
transformation is a non-degenerate map.

\item It follows from the fact that a right coloring of the vertices
of the $n$-dimensional simplicial polytope $P^*$ is exactly a
non-degenerate map $P^*\to \Delta^{\gamma-1}_{n-1}$. Now let us
prove that for any $(n-1)$-dimensional simplicial complex $K$ with
$m$ facets $s(K)\geqslant\left[\frac{m}{n+1}\right]$. Indeed, let us
construct the matrix
$$
C=\begin{pmatrix}
M\quad\mathbb O\\
I_{na+t}
\end{pmatrix}
$$
where $M=\begin{pmatrix}I_a&I_a&\ldots&I_a\end{pmatrix}$ consists of
$n$ blocks $I_a$ and $I_s$ is an identity matrix of sizes $s\times
s$.

If $(n+1)a+t=m$, then each $n$ row vectors of the matrix $C$ form a part
of a basis of $\mathbb Z^{an+t}$, since if they lie in the
$I_{na+t}$-part it is evident, and if $p>0$ vectors
$\ib{a}_{i_1},\dots,\ib{a}_{i_p}$ lie in the $M$-part and $q$ row vectors
$\ib{a}_{j_1},\dots,\ib{a}_{j_q}$ -- in the $I_{na+t}$-part, $p+q=n$,
then $q<n$ and we can find the block $I_a$ of the matrix $M$ such that
the corresponding columns of the matrix $C$ have zero components in
$\ib{a}_{j_l}$ for all $l=1,\dots,q$. Then these $n$ vectors form a part
of a basis of $\mathbb Z^{na+t}$. So $m-s(K)\leqslant an+t$ and
$s(K)\geqslant a$.

At last, let us take $a=\left[\frac{m}{n+1}\right]$. So we have this
estimate for $s(\Delta^{\gamma-1}_{n-1})$. The second inequality
follows from the fact that the function
$m-\gamma+\left[\frac{\gamma}{n+1}\right]$ decreases in the variable
$\gamma$.

\item As it was mentioned above in the cases of matrices $H$ of
sizes $m\times 2$ and $m\times 3$ it is sufficient to work over the
field $\mathbb Z_2$. Since substitution of any row vector for a zero
row vector doesn't decrease rank of a matrix, we can assume that all
row vectors are nonzero. Let $m-n=k$.
\begin{itemize}
\item[a)] $s(\Delta^{m-1}_{n-1})\geqslant 2$ if and only if there
exists an $m\times 2$ matrix $H$ consisting of $0$ and $1$ such that
any collection of $k$ vectors has rank $2$, that is consists of more
than one different vectors. Let $s_1,\,s_1,\,s_3$ be the number of
the row vectors $(1,0),\,(0,1)$, and $(1,1)$ respectively. Then
$s_i\leqslant k-1$, so $m=s_1+s_2+s_3\leqslant 3(k-1)=3(m-n-1)$. So
$m\geqslant\frac{3}{2}(n+1)$. On the other hand, if
$m<\frac{3}{2}(n+1)$, then $m>3(k-1)$. So one of the numbers $s_i$
is greater than $k-1$ and there are $k$ equal row vectors in the
matrix $H$.

\item[b)] $s(\Delta^{m-1}_{n-1})\geqslant 3$ if and only if there
exists an $m\times 3$ matrix $H$ consisting of $0$ and $1$ such that
the rank of any collection of $k$ vectors is equal to $3$. This
condition is equivalent to the existence of a $3\times 3$-submatrix
with the determinant equal to $1$, that is, the corresponding row
vectors should be pairwise different and their sum should not be
equal to zero. For any two different nonzero vectors in $\mathbb
Z_2^3$ there exists exactly one third nonzero vector such that they
are linearly dependent -- it is their sum. Then there are
$\frac{{7\choose 2}}{3}=7$ different linearly dependent triples of
different nonzero vectors.
$$
\begin{matrix}
&(1,0,0)&\times&\times&\times&&&&\\
\bullet&(0,1,0)&\times&&&\times&\times&&\\
\bullet&(0,0,1)&&\times&&\times&&\times&\\
\bullet&(1,1,0)&\times&&&&&\times&\times\\
\bullet&(1,0,1)&&\times&&&\times&&\times\\
&(0,1,1)&&&\times&\times&&&\times\\
&(1,1,1)&&&\times&&\times&\times&\\
\end{matrix}
$$
Let $s_i$ be the number of the $i$-th row vector in the matrix $H$.
For any triple of the linearly dependent vectors $(i_1,i_2,i_3)$ we
should have: $s_{i_1}+s_{i_2}+s_{i_3}\leqslant k-1$. Then
$3m\leqslant 7(k-1)$. It is a necessary condition.
\begin{itemize}

\item If $m=7l$, we can take $s_i=l$, so if the previous estimate is
valid, then
$$
s_{i_1}+s_{i_2}+s_{i_3}=3l\leqslant k-1
$$
and thus the estimate is also sufficient.

\item If $m=7l+1$, then we can take $s_1=l+1$, and $s_i=l,\, i>1$.
For any triple of linearly dependent vectors we have
$s_{i_1}+s_{i_2}+s_{i_3}\leqslant 3l+1$. If $3l+1\leqslant k-1$,
then we can build a matrix $H$. On the other hand, if $3m\leqslant
7(k-1)$, then $3l+\frac{3}{7}\leqslant k-1$, so $3l+1\leqslant k-1$,
since all the numbers are integer. Thus the condition $3l+1\leqslant
k-1$ is necessary and sufficient.

\item If $m=7l+a,\,a=2,3$, or $4$, then we can take some of the
vectors marked by the black points $l+1$ times and all the others
$l$ times. If $3l+2\leqslant k-1$, then we can build $H$. On the
other hand, let $3l+2\geqslant k$. Let $s_i=\max\{s_1,\dots,s_7\}$.
Then $s_i\geqslant l+1$. Without loss of generality we can take
$i=1$. One of the sums $s_2+s_4,\,s_3+s_5,\,s_6+s_7$ is greater than
or equal to $\frac{7l+a-s_1}{3}$, therefore if we add $s_1$, then we
obtain
$$
s_1+s_{i_2}+s_{i_3}\geqslant\frac{7l+a+2s_1}{3}\geqslant\frac{7l+2+2(l+1)}{3}=\frac{9l+4}{3}=3l+\frac{4}{3}.
$$
Since the number on the left side is integer, we have
$s_1+s_{i_2}+s_{i_3}\geqslant 3l+2\geqslant k$. This is an obstacle
to the existence of $H$.

Hence for $m=7l+2,\,7l+3,\,7l+4$, $s(P)\geqslant 3$ if and only if
$3l+2\leqslant k-1$.

\item If $m=7l+b,\, b=5,6$, then we can take $b$
vectors $l+1$ times and all the other vectors $l$ times. If
$3l+3\leqslant k-1$, then we can build $H$. Otherwise one of the
seven sums is greater than or equal to
$\frac{3m}{7}=3l+\frac{3b}{7}\geqslant 3l+\frac{15}{7}>3l+2\geqslant
k-1$. Then it is greater than or equal to $k$, and we obtain an
obstacle to the existence of $H$.
\end{itemize}
If we substitute $m-n$ for $k$ then we obtain the result of the
statement.

\end{itemize}

\item Let $\omega_1,\dots,\omega_l$ be minimal non-simplices such
that $\omega_1\cup\dots\cup\omega_l=[m]$. Then for each
$\omega_i=(j_{i,1},\dots,j_{i,|\omega_i|})$ let us build the
$m\times\dim\omega_i$-matrix $M_i$ with row vectors:
$$
\ib{m}_{i,j}=\begin{cases} (0,\dots,0),&j\notin\omega;\\
(\underbrace{0,\dots,0}_{t-1},1,0,\dots,0),&j=j_{i,t}\in\{j_{i,1},\dots,j_{i,|\omega_i|-1}\};\\
(1,\dots,1),&j=j_{i,\,|\omega_i|}.
\end{cases}
$$

Then we claim that the matrix $C=(M_1,\dots,M_l)$ has the properties
we need. In fact, the set $(i_1,\dots,i_n)$ that defines a maximal
simplex $\sigma$ (or a vertex of polytope $P$ in the case of
$K=\partial P^*$) does't contain any $\omega_i$. So let us consider
all the vertices of $\sigma$ in $\omega_1$. The corresponding row
vectors form a part of a basis of $\mathbb Z^{\dim\omega_1}$ and, in
fact, by an addition and a subtraction of the columns in $M_1$ we
can make this vectors a part of the standard basis. Then by an
addition and a subtraction of the columns in $C$ we can make the
rest coordinates corresponding to $M_2,\dots,M_l$ equal to $0$.

Then let us take $\omega_2\setminus\omega_1$ and do the same
operation. Then we take $\omega_3\setminus(\omega_1\cup\omega_2)$,
and so on. In the end we obtain that the vectors
$\ib{c}_{i_1},\dots.\ib{c}_{i_n}$ form a part of the standard basis
of $\mathbb Z^{\dim\omega_1+\dots+\dim\omega_l}$. Therefore
$m-s(K)\leqslant\dim\omega_1+\dots+\dim\omega_l$.

\item It is easy to see that a polytope is $k$-flag if and only if
any minimal non-simplex has dimension $\leqslant k-1$. Then a flag
polytope has all minimal simplices -- edges. In this case, since any
set of more than $n$ facets has the empty intersection, it should
contain some pair of facets that don't intersect.

Therefore let us start with $S_1=[m]$ and take two facets that don't
intersect, say $F_{i_1}\cap F_{i_2}=\varnothing$. Then let us colour
them in the colour $1$ and take $S_2=S\setminus\{i_1,i_2\}$. We can
take this step until $|S_i|>n$. So if $m-n=2k+1$, then we can take
$k+1$ steps and colour the rest $n-1$ facets in $n-1$ additional
colour. If $m-n=2k$, then we stop after $k$ steps and colour the
rest facets in additional $n$ colours. In both cases we have
$\gamma(P)\leqslant\left[\frac{m-n}{2}\right]+n$. Thus
$s(P)\geqslant
\left\lceil\frac{m-n}{2}\right\rceil+s(\Delta^{\gamma-1}_{n-1})$.

\item In the case of $k$-flag polytopes we can not colour in one colour facets
corresponding to a minimal non-simplex of dimension greater than
$1$. But again start with $S_1=[m]$ and choose some minimal
non-simplex $\omega_1\subset S_1$. Let $S_2=S_1\setminus \omega_1$.
In the end we take $r$ steps and come to the situation, when
$|S_{r+1}|\leqslant n$. Let
$\Sigma=|\omega_1\sqcup\dots\sqcup\omega_r|$. Then we can take
additional $m-\Sigma$ minimal non-simplices of dimensions at most
$k-1$. Thus we have: $m-s(P)\leqslant (\Sigma-r)+(m-\Sigma)(k-1)$.
Therefore $s(P)\geqslant r-(m-\Sigma)(k-2)$. But
$r\geqslant\left\lceil\frac{m-n}{k}\right\rceil$, and
$m-\Sigma\leqslant n$. So
$$
s(P)\geqslant\left\lceil\frac{m-n}{k}\right\rceil-n(k-2).
$$
In particular, since any polytope $P$ except for $\Delta^n$ is
$n$-flag, we have
$$
s(P)\geqslant\left\lceil\frac{m-n}{n}\right\rceil-n(n-2).
$$

\item If $M=\{\ib{m}_1,\dots,\ib{m}_m\}$ is an $m\times s$ matrix defining a
subgroup $H$ of dimension $s(P)$ interpreted as a set of
row-vectors, then it is easy to see, that the matrix $\hat
M=\{\underbrace{\ib{m}_1,\dots,\ib{m}_1}_{k_1},\dots,\underbrace{\ib{m}_m,\dots,\ib{m}_m}_{k_m}\}$
defines the subgroup $\hat H$, which acts freely if $H$ acts freely.

On the other hand, let $\hat H$ be a subgroup of $\dim
H=s(P_{k_1,\dots,\,k_m})$ defined by the matrix
$$
\hat
M=\{\ib{m}^1_1,\dots,\ib{m}^{k_1}_1,\dots,\ib{m}^1_m,\dots,\ib{m}^{k_m}_m\}.
$$
Then the matrix $M=\{\ib{m}^1_1,\dots,\ib{m}^1_m\}$ gives a subgroup
for $P$.

\item $P_{a_1,a_2,\,a_3}=\Delta^{a_1-1}\times\Delta^{a_2-1}\times\Delta^{a_3-1}$.
For $k\geqslant 3$ it is enough to consider the cyclic polytopes
$P_k=C^{2k-4}(2k-1)^*$, according to the Corollary 6. For $k=3$ the
polytope $P_3$ is a usual $5$-gon. For $k=4$ let us find all
possible matrices $M=\{\ib{m}_1,\dots,\ib{m}_7\}$ consisting of $0$
and $1$ that define subgroups $H$ of $\dim H=3$ acting freely on
$\mathcal Z_{P_4}$. Without loss of generality we can assume that
$$
\ib{m}_1=\ib{e}_1=\begin{pmatrix}1\\0\\0\end{pmatrix},\,
\ib{m}_4=\ib{e}_2=\begin{pmatrix}0\\1\\0\end{pmatrix},\,
\ib{m}_5=\ib{e}_3=\begin{pmatrix}0\\0\\1\end{pmatrix}.
$$

The rest four vectors should be
$$
\begin{pmatrix}1\\1\\0\end{pmatrix},\,
\begin{pmatrix}1\\0\\1\end{pmatrix},\,\begin{pmatrix}0\\1\\1\end{pmatrix},\mbox{
and }\begin{pmatrix}1\\1\\1\end{pmatrix}.
$$

Since the triples $(\ib{m}_1,\ib{m}_2,\ib{m}_5)$,
$(\ib{m}_1,\ib{m}_3,\ib{m}_5)$, $(\ib{m}_1,\ib{m}_4,\ib{m}_6)$, and
$(\ib{m}_1,\ib{m}_4,\ib{m}_7)$ define the vertices, $\ib{m}_2$ and
$\ib{m}_3$ should have the form $\begin{pmatrix}*
\\ 1\\ *\end{pmatrix}$, and $\ib{m}_6,\ib{m}_7$ --
$\begin{pmatrix}* \\ *\\ 1\end{pmatrix}$.

Let us remind that for the $3\times3$-matrix consisting of $0$ and
$1$ it's determinant is equal to $\pm 1$ if and only of it's row
vectors are linearly independent over $\mathbb Z_2$.

Then since $(\ib{m}_{2},\ib{m}_3,\ib{m}_6)$ and
$(\ib{m}_3,\ib{m}_6,\ib{m}_7)$ should be linearly independent, none
of this triple is equal to $\left(
\begin{pmatrix}1\\1\\0\end{pmatrix},
\begin{pmatrix}1\\0\\1\end{pmatrix},
\begin{pmatrix}0\\1\\1\end{pmatrix}\right)$. Thus either $\ib{m}_3=\begin{pmatrix}1\\1\\1\end{pmatrix}$ or
$\ib{m}_6=\begin{pmatrix}1\\1\\1\end{pmatrix}$.

In the first case $\ib{m}_2=\begin{pmatrix}1\\1\\0\end{pmatrix}$,
and since $(\ib{m}_3,\ib{m}_4,\ib{m}_7)$ defines the vertex,
$\ib{m}_7=\begin{pmatrix}0\\1\\1\end{pmatrix}$,
$\ib{m}_6=\begin{pmatrix}1\\0\\1\end{pmatrix}$. It is not difficult
to check that the triples $(\ib{m}_2,\ib{m}_5,\ib{m}_6)$,
$(\ib{m}_2,\ib{m}_5,\ib{m}_7)$, $(\ib{m}_2,\ib{m}_4,\ib{m}_7)$,
$(\ib{m}_1,\ib{m}_3,\ib{m}_6)$, $(\ib{m}_2,\ib{m}_4,\ib{m}_6)$, and
$(\ib{m}_3,\ib{m}_5,\ib{m}_7)$ are linearly independent over
$\mathbb Z_2$. Thus all $14$ conditions corresponding to the
vertices hold.

In the second case $\ib{m}_2=\begin{pmatrix}0\\1\\1\end{pmatrix}$,
$\ib{m}_3=\begin{pmatrix}1\\1\\0\end{pmatrix}$,
$\ib{m}_7=\begin{pmatrix}1\\0\\1\end{pmatrix}$.

Since for any two facets of $P_k$ there exists a vertex that doesn't
belong to them, and there are $7$ different nonzero vectors in
$\mathbb Z_2^3$, for $2k-1\geqslant 9$ there are no subgroups of
dimension $3$ acting freely on $\mathcal Z_{P_k}$.

So we proved that $s(P_{a_1,\dots,\,a_{2k-1}})=3$ if and only if
$k\leqslant 4$.

From Proposition 10 we obtain that
$$
2k-1=\sum\limits_{j}\beta^{-1,\,2j}(\mathcal
Z_{P_{a_1,\dots,\,a_{2k-1}}})=\sum\limits_{j}\beta^{-2,\,2j}(\mathcal
Z_{P_{a_1,\dots,\,a_{2k-1}}}).
$$

\item Let us consider two polytopes $7$-dimensional neighbourly polytopes with $10$ facets
$$
P=(2,1,1,1,1,1,1,1,1)\mbox{ and } Q=(2,1,2,1,1,2,1).
$$
$Q$ is obtained from $P$ by $4=\frac{7+1}{2}$ -flip. Then
$f(P)=f(Q),\gamma(P)=\gamma(Q)$, but $s(P)=2$ and $s(Q)=3$.
Nevertheless $\sum\limits_i\beta^{-1,\,2i}(P)=9$, but
$\sum\limits_j\beta^{-1,\,2j}(Q)=7$.

In fact, $P=C^7(10)^*$ and $Q$ is also a polytope dual to a
neighborly polytope.

\item Consider an $i$-flip $2\leqslant i\leqslant n-1$, which transforms the polytope $P^n$ to $Q^n$.
Then there are $n+1$ facets $F_{j_1},\dots,F_{j_{n+1}}$ of $P$ such
that for $t\in I=\{1,\dots i\}$ the facets
$F_{j_1},\dots,\widehat{F_{j_t}}\dots F_{j_{n+1}}$ intersect in a
vertex and for $t\in J=\{i+1,\dots, n+1\}$ this is false.

The flip exchanges the sets $I$ and $J$, so for the polytope $Q$ the set
$J$ plays the role of $I$. All other vertices are the same in $P$ and
$Q$. Let us use the second combinatorial description of the $s$-number.
Then there is the map $\mathfrak F(P)=\{F_1,\dots,F_m\}\to\mathbb
Z^{m-s(P)}: F_j\to\ib{m}_j$. We can build the map $\mathfrak
F(Q)\to\mathbb Z^{m-s(P)}\oplus\mathbb Z$:
$$
F_j\to(\ib{m}_j,0),\,j\ne j_1,\quad F_{j_1}\to(\ib{m}_{j_1},1)
$$

It is easy to see that if the map for $P$ satisfies the condition
that in every vertex $F_{i_1}\cap\dots\cap F_{i_n}$ the vectors
$\ib{m}_{i_1},\dots,\ib{m}_{i_n}$ form a part of a basis of $\mathbb
Z^{m-s(P)}$, then so does the map for $Q$ with respect to $\mathbb
Z^{m-s(P)+1}$. Since $P$ and $Q$ have the same number of facets, we
have:
$$
m-s(Q)\leqslant m-s(P)+1. $$ But the inverse transformation is an
$(n+1-i)$-flip, $2\leqslant n+1-i\leqslant n-1$. So
$|s(P)-s(Q)|\leqslant 1$.

A $1$-flip is just a cutting off the vertex, that is a connected sum
$P\sharp\Delta^n$ with $\Delta^n$ along the vertices. An $n$-flip is an
inverse operation. We know that $s(P)+1=s(P)+s(\Delta^n)\leqslant
s(P\sharp \Delta^n)$. When we make an $n$-flip, we substitute a vertex
$\ib{v}=F_{j_1}\cap\dots F_{j_n}$ for the facet $F_{j_{n+1}}$ -- an
$(n-1)$-dimensional simplex. Then the previous construction gives us the
bound $m-s(P)\leqslant m+2-s(P\sharp\Delta^n)$, where $m$ is the number
of facets of $P$ and $m+1$ -- of $P\sharp\Delta^n$. Thus we have:
$$
s(P)+1\leqslant s(P\sharp\Delta^n)\leqslant s(P)+2
$$
\end{enumerate}
\end{proof}

\begin{remark}
We obtained two lower bounds for $P$:
$s(P)\geqslant\left[\frac{m}{n+1}\right]$ and
$s(P)\geqslant\left\lceil\frac{m-n}{n}\right\rceil-n(n-2)$. In fact,
we can obtain a stronger estimate using the property $9b)$. Is says
that any $\frac{7}{4}(n+1)+2$ facets add to $s(P)$ at least $3$
(since we can build the matrix $C$ as a block matrix). Thus
$$
s(P)\geqslant
3\left[\frac{m}{\lceil\frac{7}{4}(n+1)+2\rceil}\right]>3\left[\frac{4m}{7n+19}\right].
$$
\end{remark}
\subsection{Cohomology ring of $P_{a_1,\dots,\,a_{2k-1}}$}
There is another isomorphism that we need for our computations (see
\cite{BP}):
$$
\C^{*,\,*}(\mathcal Z_P,\mathbb Z)\cong \C[R^*(P)],
$$
where
$$
R^*(P)=\Lambda[u_1,\dots,u_m]\otimes\mathbb
Z[P]/(v_i^2=u_iv_i=0,\,i=1,\dots,m),
$$
$$
\bideg u_i=(-1,2),\,\bideg v_i=(0,2),\quad du_i=v_i,\,dv_i=0.
$$

Let us apply this theorem for the case of
$P=P_{a_1,\dots,\,a_{2k-1}}$.

We will use the following observations:

\textbf{I}. Since $du_i=v_i$ and $dv_i=0$ the rings $R^*(P)$ and
$H[R^*(P)]$ have a multigraded structure (see \cite{BP}): any
monomial
$$
u_{\omega}v_{\sigma}=u_{i_1}\wedge\dots\wedge u_{i_s}v_{j_1}\dots
v_{j_t},\,\omega=\{i_1,\dots,i_s\},\,\sigma=\{j_1,\dots,j_t\},\,\omega\cap\sigma=\varnothing
$$
has graduation $2(\alpha_1,\dots,\alpha_m)$, where $\alpha_i=1$, if
$i\in\tau=\omega\cup\sigma$ and $0$ in the other case. Sometimes we
will denote this graduation by $\tau$. If the element $x$ has
graduation $2(\alpha_1,\dots,\alpha_m)$ and $\bideg x=(-q,2p)$, then
$p=\alpha_1+\dots+\alpha_m=|\tau|$.

\textbf{II}. Let $i\in\tau$. Then for each monomial
$u_{\omega}v_{\sigma}: i\in\sigma$ we have:
$$
du_{\omega\cup\{i\}}v_{\sigma\setminus\{i\}}=\pm
u_{\omega}v_{\sigma}+\sum\limits_{j\in\omega}\pm
u_{\omega\cup\{i\}\setminus\{j\}}v_{\sigma\cup\{j\}\setminus\{i\}}.
$$
Thus up to a coboundary any element
$x\in\Lambda[u_1,\dots,u_m]\otimes\mathbb Z[P]/(v_i^2=u_iv_i=0)$ of
graduation $\tau$ and degree $\bideg x=(-q,2|\tau|)$ has the form
$y\wedge u_i$.

Then $dx=dy\wedge u_i\pm y\cdot v_i$. Thus $dx=0$ if and only if
$dy=0$ and $y\cdot v_i=0$. From this fact it follows that the
cohomology ring in the case of $\Lambda[u_1,\dots,u_m]\otimes\mathbb
Z[v_1,\dots,v_m]/(v_i^2=u_iv_i=0)$ is trivial, since in this case
$yv_i=0$ if and only if $y=0$.

\textbf{III}. If $x\cdot v_i=0$, and $y\cdot v_i=0$, then
$(x+dy)\cdot v_i=0$ and $(x+dy)\wedge u_i=x\wedge u_i+d(y\wedge
u_i)$.

Now let us calculate $\C^{*,\,*}$.
\begin{itemize}
\item[{\upshape (i)}] $\C^{0,\, 0}=\mathbb Z$ with a generator $1$, $\C^{0,\,2i}=0,
i>0$ since $v_{i_1}\dots v_{i_t}=du_{i_1}v_{i_2}\dots v_{i_t}$.

\item[{\upshape (ii)}] According to the second observation any
element of outer degree $-1$ and graduation $\tau$ is equivalent to
the monomial $\lambda v_{i_1}\dots v_{i_{t-1}}u_{i_t}$. Then
$dv_{i_1}\dots v_{i_{t-1}}u_{i_t}=v_{i_1}\dots v_{i_{t-1}}v_{i_t}$.
If it is equal to $0$, then $\tau=\{i_1,\dots,i_t\}$ should contain
the segment $J_i$ corresponding to $(a_i,\dots,a_{i+k-2})$ for some
$i$ and $i_t\in J_i$. If $\tau$ does not coincide with this segment,
then $i_1\in\tau\setminus J_i$. Then $du_{i_1}v_{i_2}\dots
v_{i_{t-1}}u_{i_t}=v_{i_1}\dots v_{i_{t-1}}u_{i_t}$.

So any nontrivial cocycle has graduation $\tau=J_i$ for some $i$ and
is equivalent to the monomial
$\lambda_iv_{\eta_{i-1}+1}v_{\eta_{i-1}+2}\dots
v_{\eta_{i-1}+\varphi_i-1}u_{\eta_{i-1}+\varphi_i}$, where
$[\eta_{i-1}+1,\dots,\eta_{i-1}+\varphi_i]=J_i$ (Let us remind the
notations $\eta_i=a_1+\dots+a_i,\,
\eta_0=0,\,\varphi_i=a_i+\dots+a_{i+k-2},\,
\psi_i=a_i+\dots+a_{i+k-1}$). It is easy to see that such a monomial
is not a coboundary. Indeed, let $\lambda v_{\eta_{i-1}+1}\dots
v_{\eta_{i-1}+\varphi_i-1}u_{\eta_{i-1}+\varphi_i}=dx$. $x$ has the
same graduation, so according to \textbf{II}
$x=\sum\limits_{s=1}^{\varphi_i-1}\lambda_sv_{\eta_{i-1}+1}\dots
u_{\eta_{i-1}+s}\dots
v_{\eta_{i-1}+\varphi_i-1}u_{n_{i-1}+\varphi_i}$. But
\begin{multline*}
dv_{\eta_{i-1}+1}\dots u_{\eta_{i-1}+s}\dots
v_{\eta_{i-1}+\varphi_i-1}u_{\eta_{i-1}+\varphi_i}=v_{\eta_{i-1}+1}\dots
v_{\eta_{i-1}+s}\dots
v_{\eta_{i-1}+\varphi_i-1}u_{\eta_{i-1}+\varphi_i}-\\
-v_{\eta_{i-1}+1}\dots u_{\eta_{i-1}+s}\dots
v_{\eta_{i-1}+\varphi_i-1}v_{\eta_{i-1}+\varphi_i},
\end{multline*}
where both summands are nonzero. So $d\lambda v_{\eta_{i-1}+1}\dots
v_{\eta_{i-1}+\varphi_i-1}u_{\eta_{i-1}+\varphi_i}=ddx=0$ in the
ring $\Lambda[u_1,\dots,u_m]\otimes\mathbb
Z[v_1,\dots,v_m]/(v_i^2=u_iv_i=0)$. But this is false.

Thus we obtain:

{\itshape $\C^{-1,\,*}\cong\mathbb Z^{2k-1}$ with generators
$X_i=v_{\eta_{i-1}+1}\dots
v_{\eta_{i-1}+\varphi_i-1}u_{\eta_{i-1}+\varphi_i}$ of graduation
$J_i$ corresponding to the segments $(a_i,\dots,a_{i+k-2})$ of the
polygon. $\bideg X_i=(-1,2(a_i+\dots+a_{i+k-2}))$.}

\item[{\upshape (iii)}] Any cocycle of outer degree $-2$
and graduation $\tau=(i_1,\dots,i_t)$ up to a coboundary is equal to
$x\wedge u_{i_t}$ where $dx=0$ and $x\cdot v_{i_t}=0$. $\tau$ is not
the full circle, since any $n+1$ facets of our polytope have the
empty intersection. So we can assume that $i_k$ follow each other
one the circle and $i_t+1\notin \tau$. Then there is some $i$ such
that $J_i=[\eta_{i-1}+1,\dots,\eta_{i-1}+\varphi_i]\subseteq\tau$
and $i_t=\eta_{i-1}+\varphi_i$. When we apply the second observation
to $x$ with $i=i_{t-\varphi_i}$, we don't change the $J_i$-part of
$x$. Thus $x=\lambda v_{i_1}\dots
u_{i_{t-\varphi_i}}v_{i_{t-\varphi_i+1}}\dots
v_{i_{t-1}}u_{i_t}+dy$, where $y\cdot v_{i_t}=0$. Then
$$
x\wedge u_{i_t}=\lambda v_{i_1}\dots
u_{i_{t-\varphi_i}}v_{i_{t-\varphi_i+1}}\dots
v_{i_{t-1}}u_{i_t}+(dy)\wedge u_{i_t}=\lambda v_{i_1}\dots
u_{i_{t-\varphi_i}}v_{i_{t-\varphi_i+1}}\dots
v_{i_{t-1}}u_{i_t}+d(y\wedge u_{i_t}).
$$

Since $dx=0$, we have: $v_{i_1}\dots
v_{i_{t-\varphi_i}}v_{i_{t-\varphi_i+1}}\dots v_{i_{t-1}}=0$, so
$J_{i-1}\subset\tau$ and $i_{t-\varphi_i}\in J_{i-1}$. If $\tau\ne
J_{i-1}\cup J_i$, then $i_1\in\tau\setminus (J_{i-1}\cup J_i)$. Then
$$
du_{i_1}v_{i_2}\dots u_{i_{t-\varphi_i}}v_{i_{t-\varphi_i+1}}\dots
v_{i_{t-1}} u_{i_t}=v_{i_1}v_{i_2}\dots
u_{i_{t-\varphi_i}}v_{i_{t-\varphi_i+1}}\dots v_{i_{t-1}}u_{i_t}.
$$
Let us mention that in the case of $\tau=J_{i-1}\cup J_i$ the
position of $u$ in the vertex $i-1$ of the polygon can be chosen
arbitrarily, since if $u_{s_1}$ and $u_{s_2}$ are two positions (for
example, $u_{s_1}=u_{i_{t-\varphi_i}}$), then
\begin{multline*}
dv_{i_1}\dots u_{s_1}\dots u_{s_2}\dots v_{i_{t-\varphi_i}}\dots
v_{i_{t-1}}u_{i_t}=v_{i_1}\dots v_{s_1}\dots u_{s_2}\dots
v_{i_{t-\varphi_i}}\dots v_{i_{t-1}}u_{i_t}-\\
-v_{i_1}\dots u_{s_1}\dots v_{s_2}\dots v_{i_{t-\varphi_i}}\dots
v_{i_{t-1}}u_{i_t}.
\end{multline*}
The same is true for the vertex $i+k-2$.

Thus $\C^{-2,\,*}$ is generated by the monomials
$Y_i=u_{\eta_{i-1}+1}v_{\eta_{i-1}+2}\dots
v_{\eta_{i-1}+\psi_i-1}u_{\eta_{i-1}+\psi_i}$ where
$L_i=[\eta_{i-1}+1,\dots, \eta_{i-1}+\psi_i]$ is the segment
corresponding to the segment $(a_i,\dots,a_{i+k-1})$ on the circle.
All this monomials have different graduation, so let us consider one
of them.

Let $\lambda Y_i=dy,\,\lambda\in\mathbb Z$. Without loss of
generality $y=x\wedge u_{\eta_{i-1}+\psi_i}$.\\ Then $dx=\lambda
u_{\eta_{i-1}+1}v_{\eta_{i-1}+2}\dots v_{\eta_{i-1}+\psi_i-1}$ and
$x\cdot v_{\eta_{i-1}+\psi_i}=0$.

Then each monomial of $x$ contains the segment $v_{\eta_i+1}\dots
v_{\eta_i+\varphi_{i+1}-1}$. When we apply \textbf{II} for
$i=\eta_i$ this property isn't changed. Then
$x=\sum\limits_{s=1}^{a_i-1}\lambda_sv_{\eta_{i-1}+1}\dots
u_{\eta_{i-1}+s}\dots u_{\eta_i}v_{\eta_i+1}\dots
v_{\eta_{i-1}+\psi_i-1}+dz$, $z\cdot v_{\eta_{i-1}+\psi_i}=0$. We
can omit $dz$ according to the third observation.
\begin{multline*}
dv_{\eta_{i-1}+1}\dots u_{\eta_{i-1}+s}\dots
u_{\eta_i}v_{\eta_i+1}\dots
v_{\eta_{i-1}+\psi_i-1}=v_{\eta_{i-1}+1}\dots v_{\eta_{i-1}+s}\dots
u_{\eta_i}v_{\eta_i+1}\dots
v_{\eta_{i-1}+\psi_i-1}-\\
-v_{\eta_{i-1}+1}\dots u_{\eta_{i-1}+s}\dots
v_{\eta_i}v_{\eta_i+1}\dots v_{\eta_{i-1}+\psi_i-1},
\end{multline*}

where both summands are nonzero, so as in (ii) $d\lambda
u_{\eta_{i-1}+1}v_{\eta_{i-1}+2}\dots v_{\eta_{i-1}+\psi_i-1}$
should be equal to $0$ in $\Lambda[u_1,\dots, u_m]\otimes \mathbb
Z[v_1,\dots, v_m]/(v_i^2=u_iv_i=0)$, which is again false. So

{\itshape $\C^{-2,\,*}\cong \mathbb Z^{2k-1}$ with generators
$Y_i=u_{\eta_{i-1}+1}v_{\eta_{i-1}+2}\dots v_{\eta_{i-1}+\psi_i-1}
u_{\eta_{i-1}+\psi_i}$ of graduation $L_i$ corresponding to the
segments $(a_i,\dots,a_{i+k-1})$ of the polygon. $\bideg
Y_i=(-2,2(a_i+\dots+a_{i+k-1}))$.}

\item[{\upshape (iv)}] It is known (see \cite{BP}) that
$\C^{-(m-n),\,2m}\cong\mathbb Z$ with the generator
$u_{\omega}v_{\sigma}$, where $\sigma$ corresponds to an arbitrary
vertex of $P$ and
$\omega=\{1,\dots,m\}\setminus\sigma=[m]\setminus\sigma$. Indeed,
each cochain $u_{\omega}v_{\sigma}$ is a cocycle since any $n+1$
facets have an empty intersection. If two vertices $\sigma_1$ and
$\sigma_2$ are connected by an edge, then
$du_{[m]\setminus(\sigma_1\cap\sigma_2)}v_{\sigma_1\cap\sigma_2}=\pm(u_{\omega_1}v_{\sigma_1}-u_{\omega_2}v_{\sigma_2})$.
Since any two vertices of $P$ are connected by a sequence of edges,
we see that all the cochains $u_{\omega}v_{\sigma}$ represent the
same cohomological class. Since $\mathcal Z_P$ is an oriented
manifold, $\C^{m+n}(\mathcal Z_P)=\mathbb Z$, so the claim is
proved.

In our special case we can provide more details. Let
$\omega=\{i_1,i_2,i_3\}$, where $i_1=1,i_2=\eta_k,i_3=n+3$. These
points correspond to the vertices $a_1,a_k$, and $a_{2k-1}$
respectively. Let $\lambda u_{\omega}v_{\sigma}=dz,\,
\lambda\in\mathbb Z$. Without loss of generality we can take
$z=w\wedge u_{i_1}$. Then $dw=\lambda u_{i_2}\wedge
u_{i_3}v_{\sigma}$ and $w\cdot v_{i_1}=0$. Then each monomial of $w$
is divided by some product
$v_{\eta_{i-1}+1}\dots\widehat{v_{i_1}}\dots
v_{\eta_{i-1}+\varphi_i}$, where $i_1\in J_i$. But $i_2\notin J_i$,
since $i_1$ corresponds to the first vertex of the polygon and $i_2$
-- to the $k$-th. So we can add $dW$ with the property $W\cdot
v_{i_1}=0$ to $w$ to obtain $w'=w+dW$ such that  $w'=\tilde w\wedge
u_{i_2}$

Then $\tilde w\cdot v_{i_1}=\tilde w\cdot v_{i_2}=0$, and $d\tilde
w=-\lambda u_{i_3}v_{\sigma}$. So each monomial of $\tilde w$ is
divided by some $v_{\eta_{i-1}+1}\dots\widehat{v_{i_1}}\dots
v_{\eta_{i-1}+\varphi_i}$ and
$v_{\eta_{j-1}+1}\dots\widehat{v_{i_2}}\dots
v_{\eta_{j-1}+\varphi_j}$, where $i_1\in J_i, i_2\in J_j$. Since
$\tilde w\ne 0$, all the points corresponding to the vertices
$a_{k+1},\dots, a_{2k-1}$ can't belong to the union $J_i\cup J_j$,
so there is some gap $a_s$ in this interval. We claim that the
interval $[1,\dots,\psi_1]$ corresponding to the vertices
$(a_1,\dots, a_k)$ of the polygon belongs to the union $J_i\cup
J_j$. Indeed, either $J_i$ and $J_j$ don't intersect and their union
fills all the circle except for the vertex $a_s$, or they intersect
and cover the segment between $a_1$ and $a_k$. Thus all the
monomials of $\tilde w$ are divided by $v_2\dots v_{\eta_k-1}$.

Then we can add some $d\tilde W,\, \tilde W\cdot v_{i_1}=\tilde
W\cdot v_{i_2}=0$ to obtain $\tilde w'=\tilde w+d\tilde W=w''\wedge
u_{i_3}$ such that $dw''=-v_{\sigma}$, $w''\cdot v_{i_1}=w''\cdot
v_{i_2}=w''\cdot v_{i_3}=0$, $w''$ is divided by $v_2\dots
v_{\eta_k-1}$. But this is impossible, since in this case $w''$
should be divided by $v_1\dots\widehat{v_{i_2}}\dots v_{n+2}$ and
contain no $u$. Thus we proved that {\itshape $\C^{-3,\,2(n+3)}\cong
\mathbb Z$ with a generator $u_{\omega}v_{\sigma}$ for any vertex
$\sigma$.}

Let us note that we also proved the following fact, which we will
use later: {\itshape if $x\cdot v_{i_1},\,x\cdot v_{i_2}=0$ for
$i_1$ and $i_2$ corresponding to the vertices $a_i$ and $a_{i+k-1}$,
and $x\ne 0$, then $x$ is divided by
$v_{\eta_{i-1}+1}\dots\widehat{v_{i_1}}\dots\widehat{v_{i_2}}\dots
v_{\eta_{i-1}+\psi_i}$.}

Let $z$ be an element of graduation $\tau=\{i_1,\dots, i_t\},\,
t<n+3$ and outer degree  $-3$. Then $\tau$ doesn't fill all the
circle, so without loss of generality we can assume that
$i_t+1\notin\tau$. Let $dz=0$. Up to a coboundary $z=w\wedge
u_{i_t}$. Then $dw=0$ and $w\cdot v_{i_t}=0$. So if $w\ne 0$, then
$\tau$ contains $J_i=[\eta_{i-1}+1,\dots,\eta_{i-1}+\varphi_i]$,
$\eta_{i-1}+\varphi_i=i_t$, and $w$ is divided by
$v_{\eta_{i-1}+1}\dots v_{\eta_{i-1}+\varphi_i-1}$. Let $i_1$ be the
index next to $i_t$ on the circle. Then $i_1\ne i_t+1$, and
$i_1\notin J_i$. So we can add $dW,\, W\cdot v_{i_t}=0$ to obtain
$w'=w+dW=w''\wedge u_{i_1}$. Then $w''\cdot v_{i_1}=w''\cdot
v_{i_t}=0$ and $dw''=0$. If $w''\ne 0$, then $w''$ is divided by
$v_{\eta_{j-1}+2}\dots v_{\eta_{j-1}+\varphi_j}$, $\eta_{j-1}+1=i_1$
and by $v_{\eta_{i-1}+1}\dots v_{\eta_{i-1}+\varphi_i-1}$,
$\eta_{i-1}+\varphi_i=i_t$. But each of the intervals $J_i$ and
$J_j$ fills $k-1$ vertices of the polygon, so since
$(\eta_i+1,\dots,\eta_i+a_{i+1})\nsubseteq\tau$, we see that
$J_i\cup J_j=\tau$ and $w''$ contain no $u$. So $w''=0$. Thus we
obtain:

{\itshape $\C^{-3,\,*}=\mathbb Z$ with a generator
$Z=u_{\omega}v_{\sigma}$, where $\sigma=\{i_1,\dots,i_n\}:
F_{i_1}\cap \dots\cap F_{i_n}\ne\varnothing$ is an arbitrary vertex
of the polytope, and $\omega=[m]\setminus\sigma$. $\bideg
Z=(-3,2(n+3))$.}

\item [{\upshape (v)}] Consider an arbitrary element of outer degree
$-4 $ or less. If it's graduation is less than $[m]$, then the
argument as in previous paragraph shows that it is equivalent to
$0$.

Let $x$ be a cocycle of graduation $\tau=[m]=[n+3]$. We can assume
that $x=w\wedge u_1$. If $w\ne 0$, then each monomial of $w$ is
divided by some $v_{\eta_{i-1}}\dots\widehat{v_1}\dots
v_{\eta_{i-1}+\varphi_i}$. We can add $dW$ such that $W\cdot v_1=0$
to obtain : $w+dW=w'\wedge u_{\eta_k}$, $w'\cdot v_1=w'\cdot
v_{\eta_k}=0,\,dw'=0$. If $w'\ne 0$, then as it was mentioned above
$w'$ is divided by $v_2\dots v_{\eta_k-1}$. Then we can add $dW'$
such that $W'$ is divided by $v_2\dots v_{\eta_k-1}$ to obtain
$w'+dW'=w''\wedge u_{n+3}$. If $w''\ne 0$, then $w''\cdot
v_1=w''\cdot v_{\eta_k}=w''\cdot v_{n+3}=0$, $dw''=0$. But in this
case $w''$ should be divided by $v_2\dots\widehat{v_{\eta_k}}\dots
v_{n+2}$, which gives a contradiction, since outer degree of $w''$
is less than $0$.

Thus we have:

$$
\C^{-i,\,*}=0,\quad i>3.
$$
\end{itemize}

At last let us calculate a multiplication. It is easy to see that
for $k\geqslant 3$ we have $X_i\cdot X_j=0$, $Y_i\cdot Y_j=0$,
$X_i\cdot Y_j=\delta_{i+k-1,\,j}Z$. In the case $k=2$ we have
$X_i^2=0$, $X_iX_{i+1}=-X_{i+1}X_i=Y_i$, and $X_1X_2X_3=Z$. In fact,
this case is trivial, since
$P_{a_1,\,a_2,\,a_3}=\Delta^{a_1-1}\times\Delta^{a_2-1}\times\Delta^{a_3-1}$
and $\mathcal Z_{P_{a_1,\,a_2,\,a_3}}=S^{2a_1-1}\times
S^{2a_2-1}\times S^{2a_3-1}$. So
$$
H^{*,\,*}=\mathbb Z[X_1]/(X_1^2)\otimes\mathbb
Z[X_2]/(X_2^2)\otimes\mathbb Z[X_3]/(X_3^2)
$$
Thus we obtain:
\begin{theo*}
For the polytope $P=P_{a_1,\dots,\,a_{2k-1}}$ we have:

The bigraded cohomology ring $\C^{*,\,*}(\mathcal Z_P)$ is a free
abelian group $\mathbb Z\oplus \mathbb Z^{2k-1}\oplus\mathbb
Z^{2k-1}\oplus \mathbb Z$ with the generators
\begin{center}
$1$, $\bideg 1=(0,0)$;\\
$X_i$, $\bideg X_i=(-1,2(a_i+\dots+a_{i+k-2})), i=1,\dots, 2k-1$;\\
$Y_j$, $\bideg Y_j=(-2,2(a_j+\dots+a_{j+k-1})), j=1,\dots,2k-1$;\\
$Z$, $\bideg Z=(-3,2(n+3))$.
\end{center}
For $k\geqslant 3$
$$
X_i\cdot X_j=0,\qquad X_i\cdot Y_j=\delta_{i+k-1,\,j},Z\qquad Y_i\cdot
Y_j=0.
$$
and for $k=2$
$$
X_i^2=0,\qquad X_iX_{i+1}=-X_{i+1}X_i=Y_i,\qquad X_1X_2X_3=Z.
$$
\end{theo*}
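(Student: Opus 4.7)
The plan is to use the Buchstaber--Panov isomorphism $\C^{*,*}(\mathcal Z_P,\mathbb Z)\cong H[R^*(P)]$ stated at the start of the section, and exploit the finer $\mathbb Z^m$-grading of $R^*(P)$ in which every monomial $u_\omega v_\sigma$ (with $\omega\cap\sigma=\varnothing$) sits in multidegree $\tau=\omega\sqcup\sigma\subseteq[m]$. Since the differential preserves $\tau$, I can compute $\C^{-q,\,*}(\mathcal Z_P)$ one multidegree at a time. The face ring $\mathbb Z[P_{a_1,\dots,a_{2k-1}}]$ is generated in relations only by the $2k-1$ monomials $\mathcal V_i=v^1_i\cdots v^{a_i}_i\cdots v^1_{i+k-2}\cdots v^{a_{i+k-2}}_{i+k-2}$ indexed by the $2k-1$ segments $J_i=[\eta_{i-1}+1,\eta_{i-1}+\varphi_i]$, so the combinatorics governing which $\tau$ can support a cohomology class is quite constrained.

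The workhorse is the following reduction, which I would state as a lemma. Fix $\tau$ and an index $i\in\tau$; for any monomial $u_\omega v_\sigma$ with $i\in\sigma$ one has $d(u_{\omega\cup\{i\}}v_{\sigma\setminus\{i\}})=\pm u_\omega v_\sigma+(\text{terms with }u_i\text{ moved to }\omega)$. Hence, modulo coboundaries, any element of multidegree $\tau$ can be assumed to have $u_i$ in a preferred position; applying this to several $i\in\tau$ shows that a nonzero cocycle of outer degree $-q$ must have $\tau$ ``saturated'' by full blocks of $\mathcal V_i$'s, otherwise one can move $u$'s off and kill the class. Carrying out this reduction gives: at outer degree $-1$ the only surviving multidegrees are $\tau=J_i$, yielding the classes $X_i$ with the stated bidegree; at outer degree $-2$ the only ones are $\tau=J_{i-1}\cup J_i$ (because two ``missing'' spots are now needed, one at each end), yielding the $Y_j$; at outer degree $-3$ only $\tau=[m]$ survives, giving the fundamental class $Z=u_\omega v_\sigma$ for any vertex $\sigma$, well-defined since vertices are connected by edges; and outer degree $\leqslant-4$ is impossible by $\dim\mathcal Z_P=m+n$. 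The ranks match Proposition~10, so the abelian group structure follows.

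For the multiplication, I would compute directly on representatives. The product $X_iY_j$ is represented by the monomial supported on $J_i\cup L_j$. This is nonzero in $R^*(P)$ exactly when $J_i\cap L_j$ does not force a repeated $v$ (which would make the product vanish by $v^2=uv=0$) and when $J_i\cup L_j=[m]$, which combinatorially happens precisely when $j=i+k-1$; in that case the product is a representative of $Z$, giving $X_iY_j=\delta_{i+k-1,\,j}Z$. Likewise $X_iX_j$ has multidegree $J_i\cup J_j$, which is a proper subset of $[m]$ and cannot be a cocycle by the outer-degree argument above, so $X_iX_j=0$; the same reasoning gives $Y_iY_j=0$. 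The exceptional case $k=2$ is separate because then $P_{a_1,a_2,a_3}=\Delta^{a_1-1}\times\Delta^{a_2-1}\times\Delta^{a_3-1}$ and $\mathcal Z_P$ is a product of three odd spheres, for which the ring structure is immediate.

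The main obstacle I expect is the reduction lemma in the second paragraph: one must verify carefully that the coboundary terms introduced while moving $u_i$'s into canonical positions do not re-enter the remaining blocks of $\tau$, so that the procedure actually terminates with a uniquely-determined representative, and that the candidate representative is not itself a coboundary. This ``not a coboundary'' step requires a separate argument --- essentially checking that the putative bounding cochain would have to be a cocycle in the auxiliary ring $\Lambda[u_1,\dots,u_m]\otimes\mathbb Z[v_1,\dots,v_m]/(v_i^2=u_iv_i=0)$ whose cohomology is trivial, forcing a contradiction. Everything else is bookkeeping once this core calculation is in place.
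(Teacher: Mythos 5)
Your proposal tracks the paper's proof very closely: same Buchstaber--Panov model $R^*(P)=\Lambda[u_1,\dots,u_m]\otimes\mathbb Z[P]/(v_i^2=u_iv_i=0)$, same $\mathbb Z^m$-multigrading by $\tau=\omega\sqcup\sigma$, same reduction via the observation
$d\bigl(u_{\omega\cup\{i\}}v_{\sigma\setminus\{i\}}\bigr)=\pm u_\omega v_\sigma+\sum_{j\in\omega}\pm u_{\omega\cup\{i\}\setminus\{j\}}v_{\sigma\cup\{j\}\setminus\{i\}}$,
same degree-by-degree identification of the surviving multidegrees ($J_i$ at $-1$, $L_i=J_i\cup J_{i+1}$ at $-2$, $[m]$ at $-3$), and the same way of computing the products on representatives. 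You also correctly isolate the real technical content, namely showing that the candidate representatives are not themselves coboundaries.

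The one place your sketch is not correct as written is the vanishing of $\C^{-q,\,*}(\mathcal Z_P)$ for $q\geqslant 4$. The bare dimension bound $\dim\mathcal Z_P=m+n$ does not rule this out: a class in $\C^{-4,\,2p}$ has total degree $2p-4\leqslant 2m-4=2n+2<m+n$, so it lies well inside the allowed range. The paper instead proves this vanishing directly by the same multidegree reduction it used for $-1$, $-2$, $-3$ (this is the content of part (v) of its computation). Alternatively one can invoke the fact that $\mathbb Z[P]$ has projective dimension $m-n=3$ because $\partial P^*$ is a simplicial sphere (Cohen--Macaulayness), or bigraded Poincar\'e duality for the moment-angle manifold; either supplies the missing step, but ``by $\dim\mathcal Z_P$'' does not. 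A related small slip: you say $X_iX_j$ ``cannot be a cocycle,'' but a product of cocycles is automatically a cocycle; the correct assertion is that its cohomology class vanishes because no nonzero cohomology lives in outer degree $-2$ with multidegree $J_i\cup J_j$, and similarly $Y_iY_j=0$ requires the outer-degree $-4$ vanishing just discussed, not a dimension count.
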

\begin{cor}
For simple polytopes $P$ and $Q$ with $n+3$ facets two bigraded
rings $\C^{*,\,*}(\mathcal Z_P,\mathbb Z)$ and $\C^{*,\,*}(\mathcal
Z_P,\mathbb Z)$ are isomorphic if and only if their bigraded Betti
numbers are equal.
\end{cor}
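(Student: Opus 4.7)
The ``only if'' direction is automatic, since any bigraded ring isomorphism preserves the ranks of the bigraded components $\C^{-q,2p}$, which are by definition the bigraded Betti numbers. The work lies entirely in the converse. The plan is to use the explicit generator-relation description of $\C^{*,*}(\mathcal{Z}_P)$ from the preceding theorem. By Proposition 10, the total rank $\sum_j \beta^{-1,2j} = 2k-1$, so equality of Betti numbers forces $P = P_{a_1,\ldots,a_{2k-1}}$ and $Q = P_{b_1,\ldots,b_{2k-1}}$ with the same $k$, and moreover the multisets $\{\varphi_i^P\}$ and $\{\varphi_i^Q\}$ of second degrees of the $X_i$-generators coincide (and similarly for $\{\psi_j\}$).

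When $k=2$ the ring is the exterior algebra on three generators $X_i$ of bidegree $(-1,2a_i)$, and equality of the multisets $\{a_1,a_2,a_3\}$ gives an isomorphism by a suitable permutation of generators. The substantive case is $k\geq 3$. Here the crucial observation is the identity
$$
\varphi_i + \psi_{i+k-1} = (a_i + \cdots + a_{i+k-2}) + (a_{i+k-1} + \cdots + a_{i-1}) = a_1+\cdots+a_{2k-1} = n+3,
$$
with indices modulo $2k-1$. It says that the unique $Y_j$ with $X_iY_j=Z$, namely $j=i+k-1$, sits in bidegree $(-2,\,2(n+3)-2\varphi_i)$, so the multiplicative partner of each $X_i$ is already determined by its second bidegree.

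With this in hand, I pick any bijection $\sigma$ on $\{1,\ldots,2k-1\}$ with $\varphi_i^P = \varphi_{\sigma(i)}^Q$, which exists because the multisets agree, and define
$$
X_i^P \mapsto X_{\sigma(i)}^Q,\qquad Y_{i+k-1}^P \mapsto Y_{\sigma(i)+k-1}^Q,\qquad Z^P \mapsto Z^Q,\qquad 1 \mapsto 1.
$$
Bigrading on the $X$'s and on $Z$ holds by construction; on the $Y$'s it reduces via $\psi_{i+k-1} = (n+3) - \varphi_i$ to the equation $\varphi_i^P = \varphi_{\sigma(i)}^Q$, which is exactly how $\sigma$ was chosen. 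The relations $X_iX_j=0$ and $Y_iY_j=0$ are preserved trivially, and $X_iY_j = \delta_{i+k-1,j}Z$ is preserved because, writing $\tau(\ell)=\sigma(\ell-k+1)+k-1$, the equation $\tau(j) = \sigma(i)+k-1$ is equivalent to $j = i+k-1$, so the Kronecker delta pattern matches on both sides. Hence the map is a bigraded ring isomorphism.

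The only real obstacle in the argument is the need to keep the two bijections on $X$-indices and $Y$-indices coherent under the multiplication pairing: each is individually determined only up to permuting generators of equal bidegree, so \emph{a priori} one could pick them inconsistently. The identity $\varphi_i + \psi_{i+k-1} = n+3$ is exactly what forces this coherence automatically from the bigrading data.
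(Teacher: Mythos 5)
Your proof is correct and follows the route the paper intends: the corollary is stated as an immediate consequence of the structure theorem, and your argument simply makes explicit why the ring is determined by its bigraded Betti numbers, with the key point being the identity $\varphi_i+\psi_{i+k-1}=n+3$, which forces the pairing $X_i\cdot Y_{i+k-1}=Z$ to be compatible with any degree-matching bijection of generators. The observation that this identity makes the choices on the $X$- and $Y$-indices automatically coherent is exactly the content hiding behind the paper's unproved "corollary," so there is nothing to add.
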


\textbf{Examples}.

\textbf{1.} The polytope $P$ corresponding to the numbers
$(\underbrace{a,a,\dots,a}_{2k-1})$ is a unique combinatorial
polytope with the bigraded cohomology ring $\C^{*,\,*}(\mathcal
Z_P)$.

\textbf{2.} Let $P$ correspond to the sequence $(1,1,2,2,2)$ and $Q$
-- to $(1,1,3,1,2)$. Then
$$
\C^{*,\,*}(\mathcal Z_P)\cong \C^{*,\,*}(\mathcal Z_Q).
$$

\end{document}